\newcommand{\Title}{From Random Lines to Metric Spaces}
\definecolor{linkcolor}{named}{Maroon}
\definecolor{citecolor}{named}{OliveGreen}
\definecolor{urlcolor}{named}{RoyalPurple}
\definecolor{okcolor}{named}{OliveGreen}
\definecolor{alertcolor}{named}{BrickRed}
\newcommand{\ball}{\operatorname{ball}}
\renewcommand{\d}{\text{\rm{d}}}
\newcommand{\dist}{\operatorname{dist}}
\newcommand{\eps}{\varepsilon}
\newcommand{\half}{\frac12}
\newcommand{\hittingset}[1]{\left[#1\right]}
\newcommand{\hits}{\Uparrow}
\newcommand{\origin}{\text{\textbf{o}}}
\newcommand{\Expect}[1]{\operatorname{\mathbb{E}}\left[#1\right]}
\newcommand{\Image}{\operatorname{Im}}
\newcommand{\Leb}{\operatorname{Leb}}
\newcommand{\Lipschitz}{\operatorname{Lip}}
\newcommand{\Lines}{\mathcal{L}}
\newcommand{\Paths}{\mathcal{A}}
\newcommand{\Prob}[1]{\operatorname{\mathbb{P}}\left[#1\right]}
\newcommand{\Reals}{\mathbb{R}}
\newcommand{\Silhouette}{\mathcal{S}}
\newcommand{\Sobolev}{W^{1,2}}
\newtheorem{thm}{Theorem}[section]
\newtheorem{lemma}[thm]{Lemma}
\newtheorem{lem}[thm]{Lemma}
\newtheorem{cor}[thm]{Corollary}
\newtheorem{defn}[thm]{Definition}
\newtheorem{rem}[thm]{Remark}
\numberwithin{equation}{section}
\newenvironment{Figure}%
{\begin{figure}[htbp]\begin{framed}}%
{\end{framed}\end{figure}}
\title{\Title}
\author{Wilfrid S. Kendall\footnote{Work supported in part by EPSRC Research Grant  EP/K013939.}}
\date{\today}
\begin{document}


 \maketitle

\begin{abstract}
Consider an improper Poisson line process, marked by positive speeds so as to satisfy a scale-invariance property (actually, scale-equivariance).
The line process can be characterized by its intensity measure, which belongs to a one-parameter family if scale and Euclidean invariance are required.
This paper investigates a proposal by Aldous, namely that the line process could be used to produce a scale-invariant
random spatial network (SIRSN) by means of connecting up points using paths which follow segments from the line process
at the stipulated speeds. It is shown that this does indeed produce a scale-invariant network, under suitable conditions on the parameter; indeed this then produces
a parameter-dependent random geodesic metric for \(d\)-dimensional space (\(d\geq2\)), where geodesics are given by minimum-time paths. 
Moreover in the planar case it is shown that the resulting geodesic metric space has an almost-everywhere-unique-geodesic property,
that geodesics are locally of finite mean length, and that if an independent Poisson point process is connected up by such geodesics then
the resulting network places finite length in each compact region. It is an open question whether the result is a SIRSN (in Aldous' sense; so placing finite \emph{mean} length in each compact region), 
but it may be called a pre-SIRSN.
\end{abstract}

 \begin{quotation}
  \noindent
  Keywords and phrases:\\
  \textsc{fibre process};
  \textsc{line-space};
  \textsc{Lipschitz path};
  \textsc{marked line process};
  \textsc{orientation field};
  \textsc{perpetuity};
  \textsc{Poisson line process};
  \textsc{pre-SIRSN};
  \textsc{random upper-semicontinuous function};
  \textsc{\(\Pi\)-geodesic};
  \textsc{\(\Pi\)-path};
  \textsc{scale invariance};
  \textsc{SIRSN};
  \textsc{Sobolev space};
  \textsc{spatial network};
  \textsc{stochastic geometry};
  \textsc{weak SIRSN}
 \end{quotation}

 \begin{quotation}\noindent
  MSC 2010 Mathematics Subject Classification:\\
  \qquad Primary 60D05\\
  \qquad Secondary 90B15; 90B20; 46E35
 \end{quotation}

 \section{Introduction}\label{sec:prelims}
This paper is dedicated to the memory of Don Burkholder, a great probabilist and a kind man.

Recent work in random spatial networks \citep{AldousGanesan-2013,Aldous-2012} has focussed on specification and analysis of an intriguing class of random networks known as
\emph{scale-invariant random spatial networks} (SIRSN). 
Motivated by the success of Google Maps and Bing Maps, \cite{Aldous-2012} shows how a natural collection of desirable properties (statistical invariance under translation, rotation and scale-change,
and some integrability conditions) define a class of models with a useful structure theory.
\begin{defn}[Definition of a SIRSN, \citealp{Aldous-2012}]\label{def:SIRSN}
Consider a \(d\)-dimensional random mechanism, which provides random routes connecting any two points \(x_1,x_2\in\Reals^d\).
We say that this is a \emph{SIRSN} if the following properties hold:
\begin{enumerate}
 \item\label{def:SIRSN-item-route} 
 Between any specified two points \(x_1,x_2\in\Reals^d\), almost surely the random mechanism provides just one connecting random route \(\mathcal{R}(x_1,x_2)=\mathcal{R}(x_2,x_1)\), which is a finite-length path connecting \(x_1\) to \(x_2\).
 \item\label{def:SIRSN-item-invariance} 
 For a finite set of points \(x_1,\ldots,x_k\in\Reals^d\), consider the random network \(\mathcal{N}(x_1,\ldots,x_k)\) formed by the random routes provided by the structure to connect all  \(x_i\) and \(x_j\).
 Then \(\mathcal{N}(x_1,\ldots,x_k)\) is statistically invariant (strictly speaking, equivariant) under translation, rotation, and re-scaling: 
 if \(\mathfrak{S}\) is a Euclidean similarity of \(\Reals^d\) then the networks \(\mathfrak{S}\mathcal{N}(x_1,\ldots,x_k)\) and \(\mathcal{N}(\mathfrak{S}x_1,\ldots,\mathfrak{S}x_k)\)
 have the same distribution.
 \item\label{def:SIRSN-item-finite-length} 
 Let \(D_1\) be the length of the route between two points separated by unit Euclidean distance. Then \(\Expect{D_1}<\infty\).
 \item\label{def:SIRSN-item-locally-finite} 
 Suppose that \(\Xi_\lambda\) is a Poisson point process in \(\Reals^d\), of intensity \(\lambda>0\) and independent of the random mechanism in question.
 Then \(\mathcal{N}(\Xi_\lambda)\), the union of all the networks \(\mathcal{N}(x_1,\ldots,x_k)\) for \(x_1,\ldots,x_k\in\Xi_\lambda\), is a locally finite fibre process in \(\Reals^d\). 
 That is to say, for any compact set \(K\) the total length of \(\mathcal{N}(\Xi_\lambda)\cap K\) is almost surely finite.
 \item\label{def:SIRSN-item-finite-intensity} 
 The length intensity \(\ell\) of \(\mathcal{N}(\Xi_1)\) (the mean length per unit area) is finite.
 \item\label{def:SIRSN-item-SIRSN} 
Suppose the Poisson point processes \(\{\Xi_\lambda:\lambda>0\}\) are coupled so that \(\Xi_{\lambda_1}\supseteq\Xi_{\lambda_2}\) if \(\lambda_1<\lambda_2\). The fibre process
 \[
\bigcup_{\lambda>0}\bigcup_{x_1,x_2\in\Xi_\lambda} \left( \mathcal{R}(x_1,x_2)\setminus(\ball(x_1,1)\cup\ball(x_2,1)) \right)
 \]
 has length intensity bounded above by a finite constant \(p(1)\).
\end{enumerate}
If only properties \ref{def:SIRSN-item-route}-\ref{def:SIRSN-item-finite-intensity} are satisfied, then the random mechanism is called a \emph{weak SIRSN}.
If only properties \ref{def:SIRSN-item-route}-\ref{def:SIRSN-item-locally-finite} are satisfied, then the random mechanism is called a \emph{pre-SIRSN}.
\end{defn}

\cite{AldousGanesan-2013} describe the \emph{binary hierarchy model},
a structure for providing planar routes, based on minimum-time paths using a dyadic grid furnished with speeds and uniformly randomized in orientation and position.
\cite{Aldous-2012} proves that this is a full planar SIRSN satisfying all the requirements of Definition \ref{def:SIRSN}. 
\cite{AldousGanesan-2013} also propose two other candidates for planar SIRSNs which do not involve the somewhat unnatural randomization required for the binary hierarchy model: 
one is based on route-provision \emph{via} a scale-invariant improper Poisson line process marked with random speeds
(the \emph{Poisson line process model}); and the other uses a dynamic proximity graph related to the Gabriel graph.
The purpose of the present paper is to explore the Poisson line process model: we will show that it is at least a pre-SIRSN if \(d=2\), and moreover we will show that even in dimension \(d>2\)
the construction provides a random metric space on \(\Reals^d\) (in particular it satisfies at least properties \ref{def:SIRSN-item-route}-\ref{def:SIRSN-item-invariance}
of Definition \ref{def:SIRSN}, with the possible exception of uniqueness of route). 
This therefore establishes the significance of the Poisson line process model as a scale-invariant random spatial network, while leaving open the question of whether it is a weak SIRSN or
even a full SIRSN, not just a pre-SIRSN.

The chief difficulty in analyzing any of these random mechanisms lies in the fact that it is hard to work with explicit minimum-time paths, whose explicit construction would involve solving a non-local minimization problem to determine geodesics.
\citet{AldousKendall-2007} and \citet{Kendall-2011b,Kendall-2014a} use approximations known as ``near-geodesics'', constructed using a kind of greedy algorithm. 
\citet{BaccelliTchoumatchenkoZuyev-2000} and \citet{BroutinDevillerHemsley-2014} study Delaunay tessellation paths that are determined using either their relationship to appropriate Euclidean straight lines or the so-called ``cone walk''.
\citet{LaGatta-2011} studies geodesics determined by random \emph{smooth} Riemannian structures, for which conventional calculus methods are available.
In the following, we argue for existence of minimum-time paths by exploiting properties of a 
Sobolev space of paths, and then by using indirect arguments.

The structure of the paper is as follows. 
The rest of this introduction (Section \ref{sec:prelims}) is concerned with basic notions of stochastic geometry (Subsection \ref{sec:notation})
and with the definition of the underlying improper Poisson line process \(\Pi\) marked with speeds (Subsection \ref{sec:improper}). 
This improper Poisson line process \(\Pi\) is defined by an intensity measure \((\gamma-1) v^{-\gamma}\d{v}\,\mu_d(\d{\ell})\) (for speed \(v>0\), parameter \(\gamma>1\), and invariant measure \(\mu_d\) on line-space)
and supplies a measurable orientation field marked by speeds:
Section \ref{sec:lipschitz-paths} then explores the way in which the measurable orientation field can be integrated to provide Lipschitz paths based on the marked line process, namely \(\Pi\)-paths.
Sobolev space and comparison arguments can then be used to establish \emph{a priori} bounds on Lipschitz constants for finite-time \(\Pi\)-paths (Theorem \ref{thm:a-priori-bound}),
hence closure, weak closure, and finally weak compactness (Corollary \ref{cor:compactness}) of finite-time \(\Pi\)-paths. All these results require \(\gamma\geq d\).
Note that dimension \(d>1\) if line-process theory is to be non-vacuous.

Section \ref{sec:pi-paths and points} shows that, given \(\gamma>d\) and fixed points \(x_1,x_2\in\Reals^d\), it is almost surely possible to connect \(x_1\) to \(x_2\) in finite time with \(\Pi\)-paths 
(Theorem \ref{thm:connection}), and indeed with probability \(1\) it is possible to connect \emph{all} pairs of points in this way (Theorem \ref{thm:metric-space}). 
Combined with Corollary \ref{cor:compactness}, this implies the existence of minimum-time \(\Pi\)-paths, namely \emph{\(\Pi\)-geodesics} (Definition \ref{def:geodesic}, Corollary \ref{cor:geodesics}).
In dimension \(d>2\) this is a rather unexpected result, since almost surely none of the lines of \(\Pi\) will then intersect. 
Nevertheless, \(\Pi\) then furnishes \(\Reals^d\) with the structure of a random geodesic metric space. 
In these higher dimensions it is difficult to imagine what a \(\Pi\)-geodesic might look like (Figure \ref{fig:dumbbell} illustrates the easier \(d=2\) case): 
however Definition \ref{def:near-sequential-pi-path}, Theorem \ref{thm:pi-path-approximation} and Corollary \ref{cor:pi-path-approximation} describe a class of ``\(\eps\)-near-sequential-\(\Pi\)-paths''
which can be used to approximate (and to simulate) \(\Pi\)-geodesics (Theorem \ref{thm:near-sequential-compactness}). 
In particular these results imply measurability of the random time taken to pass from one point to another using a \(\Pi\)-geodesic (Corollary \ref{cor:measurable-time}).

The remainder of the paper is restricted to the planar case of \(d=2\), since the arguments now make essential use of point-line duality.
Consider the extent to which networks formed by \(\Pi\)-geodesics fulfil the requirements of Definition \ref{def:SIRSN}. 
The statistical invariance property \ref{def:SIRSN-item-invariance} follows immediately from similar invariance of the underlying intensity measure of the improper Poisson line process
(whether planar or not).
Property \ref{def:SIRSN-item-route} requires almost sure uniqueness of network routes: 
Section \ref{sec:pi-geodesics-uniqueness} establishes this for \(\gamma>d=2\) (Theorem \ref{thm:uniqueness}),
using a careful analysis of the nature of planar \(\Pi\)-geodesics (Theorem \ref{thm:encounters}) which falls just short of establishing that planar \(\Pi\)-geodesics can be made up of consecutive sequences of line segments.
While \(\Pi\)-geodesics between pairs of points are minimum-time paths, the fact that they have finite mean length is not immediately apparent;
this is established in Section \ref{sec:pi-geodesics-finite-mean-length}, first for restricted planar \(\Pi\)-geodesics (Lemma \ref{lem:finite-mean-length-in-ball}),
then for general planar \(\Pi\)-geodesics (Theorem \ref{thm:finiteness-of-mean}).
Thus the finite-mean-length property \ref{def:SIRSN-item-finite-length} of Definition \ref{def:SIRSN} is verified for \(d=2\).
Finally the pre-SIRSN property \ref{def:SIRSN-item-SIRSN} is established for the planar case in Theorem \ref{thm:pre-SIRSN} of Section \ref{sec:properties};
here also is established the uniqueness of planar \(\Pi\)-geodesics reaching out to infinity (Theorem \ref{thm:unique-to-infinity}) and, 
for any specified point \(x\in\Reals^2\), 
the fact that all \(\Pi\)-geodesics emanating from \(x\) must coincide for initial periods (Theorem \ref{thm:coalescence-of-geodesics}). 
These results are established using an essentially soft argument concerning the existence of certain structures in \(\Pi\) (Lemma \ref{lem:pre-SIRSN-structure});
the concluding Section \ref{sec:conclusion} notes that more quantitative arguments would be required to decide whether the weak SIRSN or full SIRSN properties hold.
Section \ref{sec:conclusion} also notes some other interesting open questions.
\subsection[Notation and basic results]{Notation and basic results for random line processes}\label{sec:notation}
Random line processes (random patterns of lines) play a fundamental r\^ole in this study.
Here we review notation and basic results for un-sensed random line processes in Euclidean space, as described in \citet[Chapter 8]{ChiuStoyanKendallMecke-2013}.
(By an ``un-sensed line'', we mean a line without preferred direction.)
The corresponding theory for sensed lines follows from the observation that the space of sensed lines forms a double cover of the space of un-sensed lines.

Consider \emph{line-space}, the space \(\Lines^d\) of all un-sensed lines in \(\Reals^d\), for dimension \(d\geq2\). 
In the planar case \(d=2\) there is a natural geometric representation of \(\Lines^2\) as a punctured projective plane, 
since there is a \(3\)-space construction of the family of planar lines as the family of intersections of \(2\)-subspaces with a reference plane (say \(x_3=1\)).
More visually, but less naturally, \(\Lines^2\) can be viewed as a M\"obius band of infinite width.
Similar but less graphic geometric descriptions of \(\Lines^d\) (and its sensed counterpart) can be given in higher dimensional cases (\(d>2\)):
for example, the space of \emph{sensed} lines in \(\Reals^d\) can be represented using the standard immersion of the tangent bundle \(TS^{d-1}\) of the \((d-1)\)-sphere in \(\Reals^d\).

It is convenient to introduce notation for hitting events and hitting sets. For a line \(\ell\in\Lines^d\) and for \(K\) a compact subset of \(\Reals^d\), we write
\begin{equation}\label{eqn:def-hit}
 \ell \hits K
\end{equation}
for the statement that \(\ell\) intersects \(K\). 
We also introduce the \emph{hitting set} of \(K\) (the set of lines that hit \(K\)):
\begin{equation}\label{eqn:hitting-set}
 \hittingset{K} \quad=\quad 
\left\{\ell\in\Lines^d\;:\; \ell\hits K\right\}\,.
\end{equation}

General arguments show that there exists a measure on \(\Lines^d\) that is invariant under Euclidean isometries and unique up to a scaling factor.
Line-space \(\Lines^d\) can be constructed as the quotient space of the group of \(d\)-dimensional rigid motions by the subgroup that leaves a specified line invariant.
The existence of invariant measure on line-space
follows from the study of quotient measures for locally compact topological groups; a conceptual and general treatment of existence and uniqueness
is given by \citet[Section 2.3]{AbbaspourMoskowitz-2007} (see also \citealp[pp.~130-133]{Loomis-1953}), and follows here from unimodularity of the two groups in question.
\citet[Chapter 10]{Santalo-1976} and \citet{Ambartzumian-1990} describe alternative approaches that are direct but are computational rather than conceptual.
\begin{defn}\label{def:invariant-line-measure}
\emph{Invariant line measure} \(\mu_d(\d{\ell})\) is the unique measure on \(\Lines^d\) that is invariant under Euclidean isometries and is normalized by
the following requirement:
for all compact convex sets \(K\subset\Reals^d\) of non-empty interior (``convex bodies''),
the \(\mu_d\)-measure of the hitting set \(\hittingset{K}\) is half the Hausdorff \((d-1)\)-dimensional measure of the boundary of \(K\):
\begin{equation}\label{eqn:normalized-measure}
 \mu_d(\hittingset{K})\quad=\quad\half m_{d-1}(\partial K)\,.
\end{equation}
\end{defn}%
Here and in the following, \(m_{d-1}\) denotes Hausdorff measure of dimension \(d-1\).
The purpose of the normalization factor \(\tfrac12\) is to ensure that the \(\mu_d\)-measure of the hitting set of a fragment \(A\) of a flat hyper-surface is equal to 
its hyper-surface area \(m_{d-1}(A)\).

In the important special case of \(d=2\),
we can parametrize an un-sensed line \(\ell\in\Lines^2\) by (a) the angle \(\theta=\theta(\ell)\in[0,\pi)\) that it makes with a reference line (say, the \(x\)-axis),
and (b) the \emph{signed} distance \(r=r(\ell)\) between the line \(\ell\) and a reference point 
(conventionally taken to belong to the reference line; say, the origin \(\origin=(0,0)\)).  
Equation \eqref{eqn:normalized-measure} then takes a more explicit form:
\begin{equation}\label{eqn:planar-normalized-measure1}
 \mu_2(\d{\ell})\quad=\quad\half \;\d{r}\,\d{\theta}\,.
\end{equation}
More generally, the line measure \(\mu_d(\d{\ell})\) can be disintegrated using \((d-1)\)-dimensional Hausdorff measure on the hyperplane perpendicular to \(\ell\).
Let \(\varpi\) be the un-sensed direction of \(\ell\) and let \(y\) be its point of intersection on the perpendicular hyperplane.
Let \(\kappa_{s}=\tfrac{s/2}{\Gamma(1+s/2)}\) denote the \(s\)-dimensional volume of the unit ball in \(\Reals^s\), and for later convenience let \(\omega_{s-1}=s\kappa_s\)
denote the hyper-surface area of its boundary. Then
\begin{equation}\label{eqn:disintegration1}
 \mu_d(\d\ell) \quad=\quad \frac{1}{\kappa_{d-1}} \;  m_{d-1}(\d y)\, m_{S_+^{d-1}}(\d\varpi)\,,
\end{equation}
where the measure \(m_{S_+^{d-1}}\) is defined on the space of un-sensed line directions and can be thought of as \((d-1)\)-dimensional Hausdorff measure on the unit hemisphere \(S_+^{d-1}\)
in \(\Reals^d\). Proper interpretation of the representation \eqref{eqn:disintegration1} requires the space of un-sensed directions to
be considered as a further projective space, 
and the product measure to be twisted to take account of the fact that \(m_{d-1}\) here is defined on the hyperplane normal to the un-sensed direction of the line in question.
However the resulting discrepancies are confined to a null-set which can be ignored when considering invariant Poisson line processes.

An alternative representation, useful for certain calculations,
describes \(\mu_d\) in terms of the intersection of \(\ell\) with a fixed reference hyperplane. In two dimensions we obtain
\begin{equation}\label{eqn:planar-normalized-measure2}
 \mu_2(\d{\ell})\quad=\quad\half \, \sin\theta \; \d{p}\, \d{\theta}\,,
\end{equation}
where \(p=p(\ell)\) is the signed distance from the reference point \(\origin\) to the intersection of \(\ell\) with the reference line.
This alternative representation is defective: if \(\theta=0\) then there is no intersection and so \(p\) is ill-defined. 
However once again the resulting discrepancies are confined to a null-set which can be ignored when considering invariant Poisson line processes.
In higher dimensions the corresponding representation is
\begin{equation}\label{eqn:disintegration2}
 \mu_d(\d\ell) \quad=\quad \frac{\sin\theta}{\kappa_{{d-1}}} \;m_{d-1}(\d z)\, m_{S_+^{d-1}}(\d\varpi) \,,
\end{equation}
where \(\theta\) is the angle made by the un-sensed direction \(\varpi\) of the line \(\ell\) with the fixed reference hyperplane, and \(z\) locates the intersection of \(\ell\) with the reference hyperplane.

Note finally that the arguments of this paper depend only on the general forms of Equations (\ref{eqn:planar-normalized-measure1}, \ref{eqn:disintegration1}, \ref{eqn:planar-normalized-measure2}, \ref{eqn:disintegration2});
the exact constants involved
are not crucial.
\subsection{Improper Poisson line processes}\label{sec:improper}
Our constructions use \emph{Poisson} line processes. A unit-intensity Poisson line process in \(\Reals^d\) is obtained simply
by generating a Poisson point process on the corresponding representing space \(\Lines^d\) using the invariant measure
\(\mu_d\). It is a geometric consequence of the \(\sigma\)-finiteness of \(\mu_d\) that the resulting random line pattern is locally finite: only finitely many lines hit any given compact set.
However our constructions will use \emph{improper} Poisson line processes, which can be viewed as superpositions of infinitely many independent Poisson line processes, 
different line processes being thought of as representing highways with speed limits lying in different ranges.
If we augment the representation space by a mark space \((0,\infty)\) of speed-limits, then the improper Poisson
line process can be represented as a Poisson point process on \(\Lines^d\times(0,\infty)\), with a \(\sigma\)-finite intensity measure on \(\Lines^d\times(0,\infty)\) which is invariant under rigid motions but which 
does not project down onto a \(\sigma\)-finite intensity measure on \(\Lines^d\).
Thus the main actors in this account are invariant {improper} un-sensed Poisson line processes, 
with each line \(\ell\) being marked by a different positive speed-limit \(v=v(\ell)>0\). 
Scaling arguments \citep{Aldous-2012,AldousGanesan-2013} lead to a natural family of intensity measures
for such a marked line process, based on
a positive parameter \(\gamma>1\):
\begin{equation}\label{eqn:improper}
 (\gamma-1) \, v^{-\gamma} \,\d{v} \;\mu_d(\d{\ell})\,.
\end{equation}
The factor \(\gamma-1\) ensures that for all \(\gamma>1\) the sub-process of lines with marks \(v>1\) forms a unit-intensity Poisson line process which is of \emph{unit intensity}, in the sense that its mean intensity is the invariant measure given in \eqref{eqn:normalized-measure}, so that the mean number of lines hitting a flat fragment of hyper-surface is equal to its hyper-surface area.
In case \(d=2\) we may write this intensity measure as \(\half (\gamma-1) v^{-\gamma}\,\d{v}\,\d{r}\,\d{\theta}\). 
Fixing a general dimension \(d\) and parameter \(\gamma>1\), let \(\Pi=\Pi^{(d,\gamma)}\) denote the resulting random process of marked lines \((\ell, v(\ell))\). 
In the following, the dependence on \(d\) and \(\gamma\) will be clear from the context,
and consequently will be suppressed. Figure \ref{fig:dumbbell} illustrates the formation of minimum-time routes between two fixed collections of nodes, for varying values of the parameter \(\gamma>2\).
Note that spatial networks formed in this way will automatically satisfy
property \ref{def:SIRSN-item-invariance} of Definition \ref{def:SIRSN}, because of the invariance properties of the intensity measure \eqref{eqn:improper}.
\begin{Figure}
  \centering
  \includegraphics[width=1.5in]{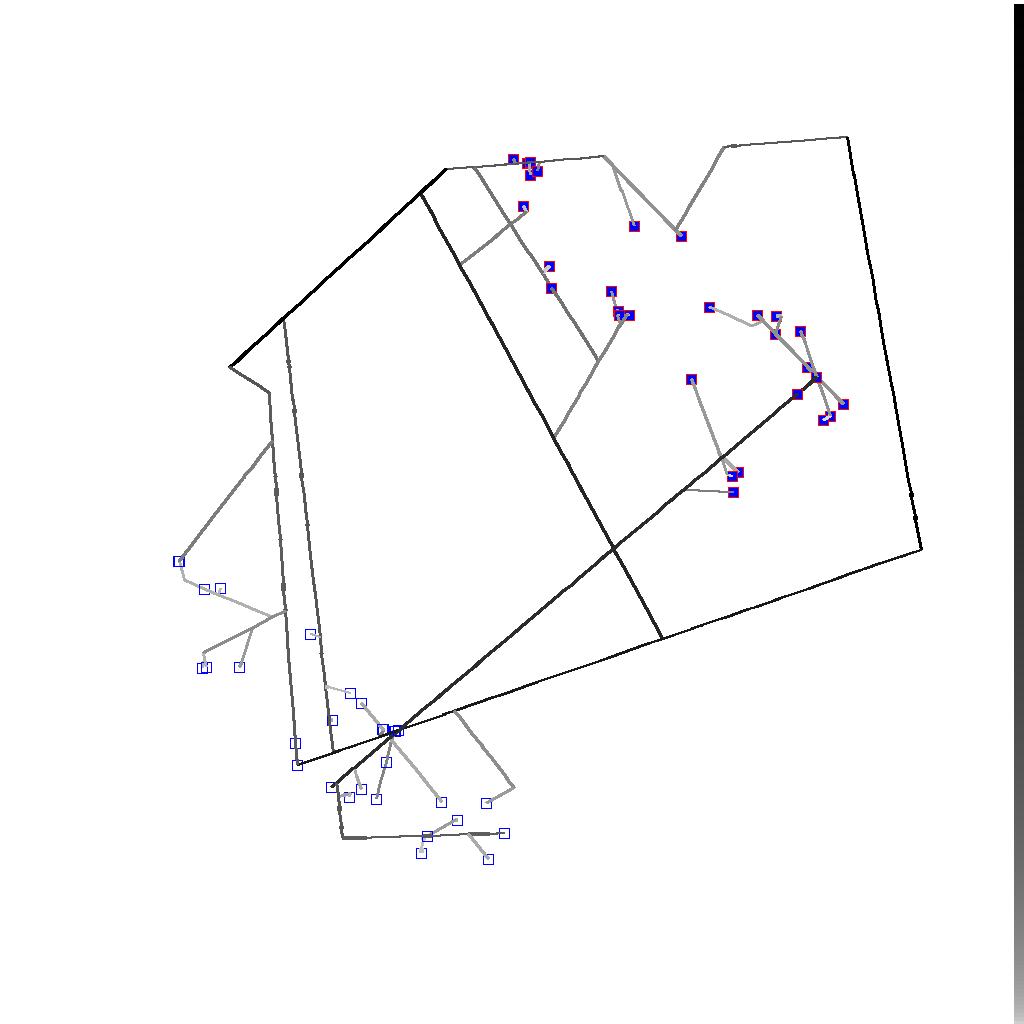}\hfil
  \includegraphics[width=1.5in]{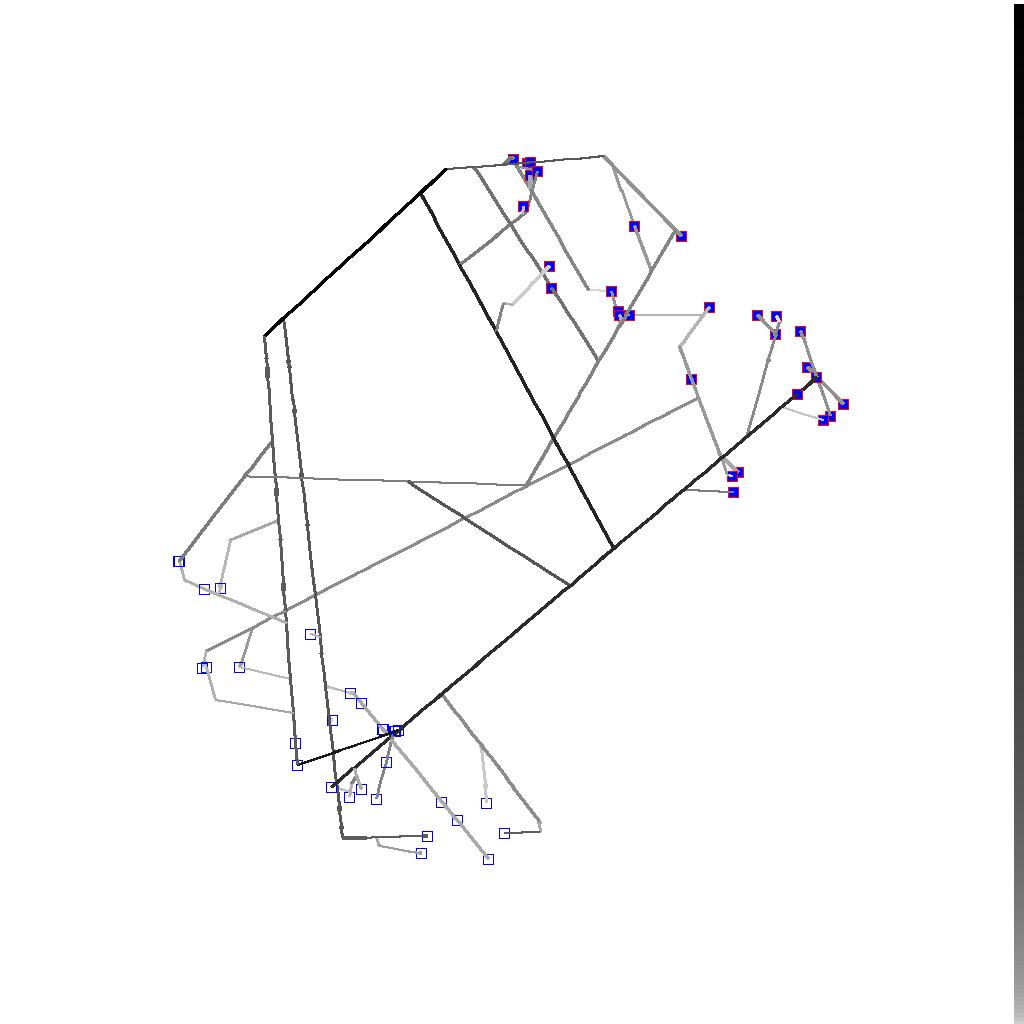}\hfil
  \includegraphics[width=1.5in]{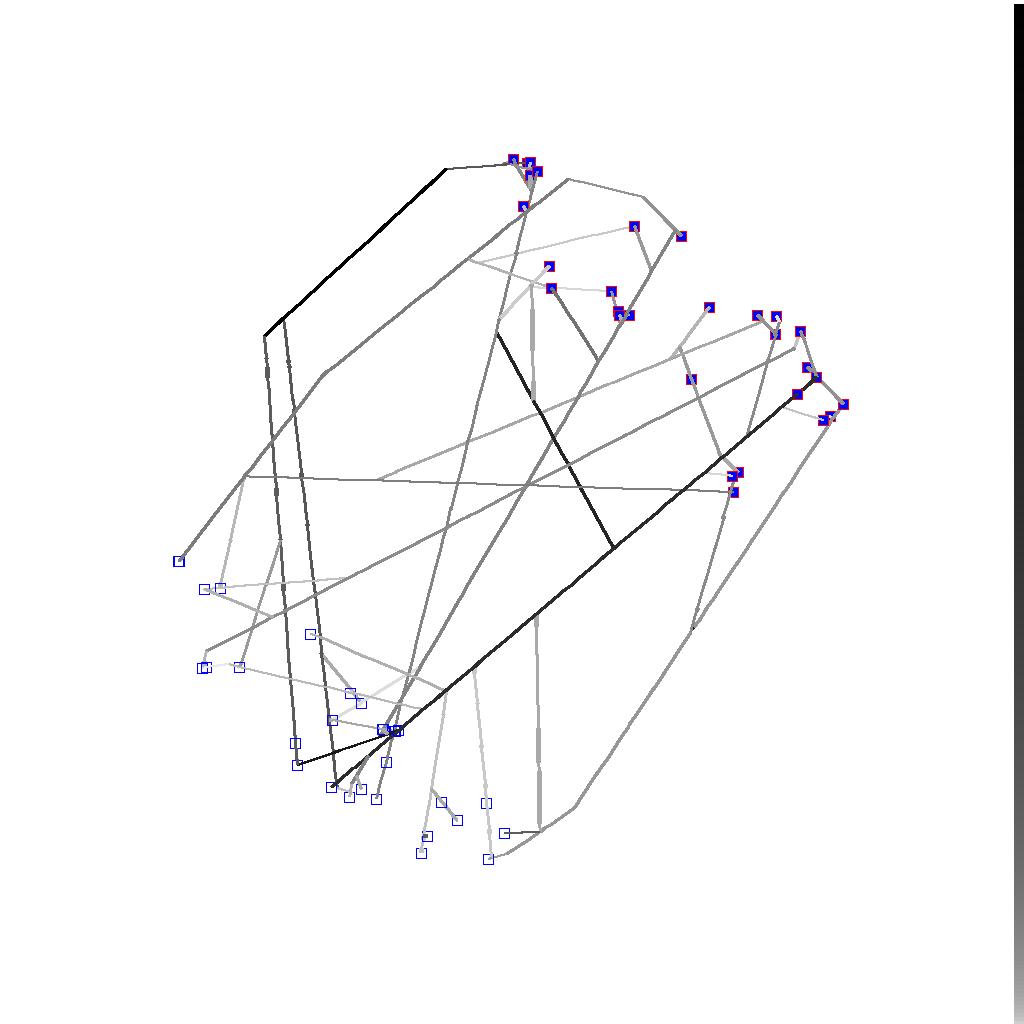}\hfil
  \includegraphics[width=1.5in]{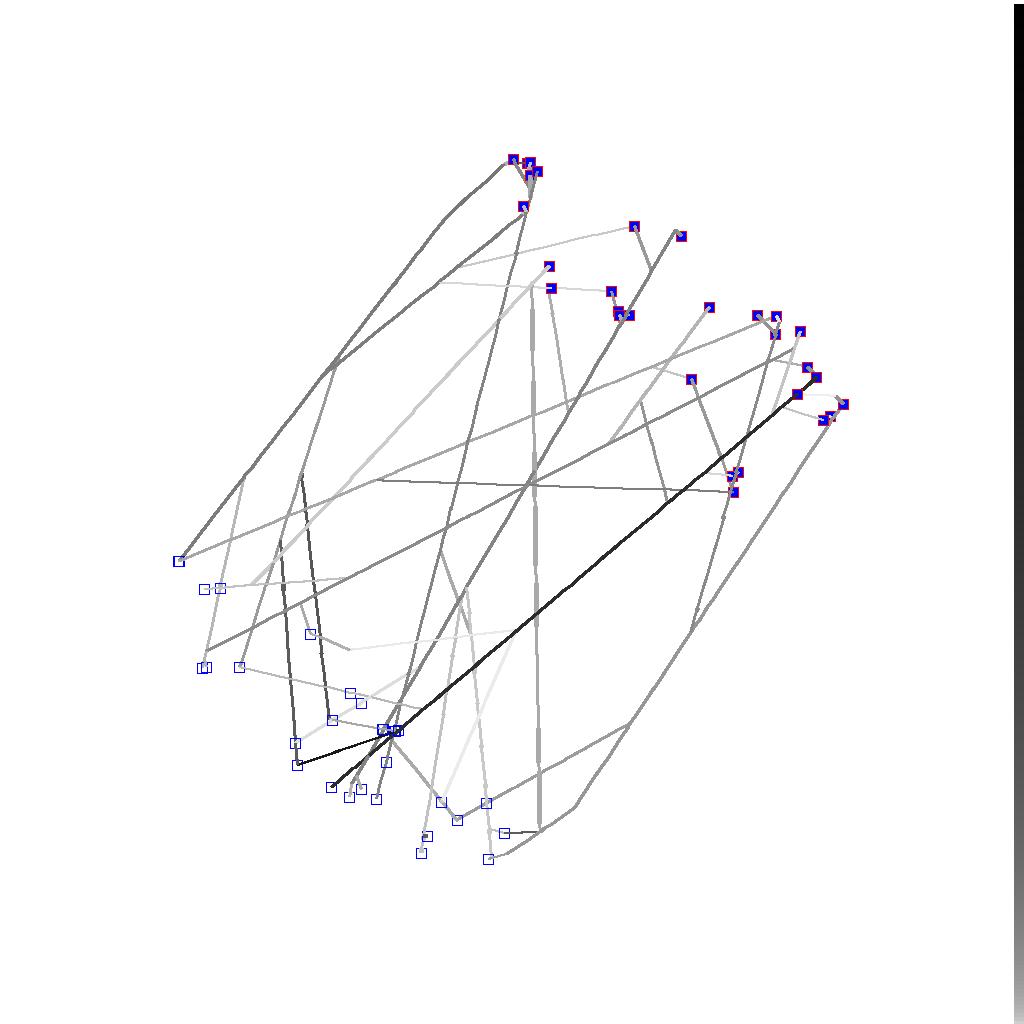}
\caption{\label{fig:dumbbell}
Minimum-time routes between two separated collections of nodes for networks built from the improper Poisson line process described in Section \ref{sec:improper},
for parameter \(\gamma\) taking values \(2.1,4.0,8.0,16.0\). Lighter segments have lower speed-limits.
Note that as \(\gamma\) increases so the routes become more direct, but also there is less route-sharing.
}
\end{Figure}

The intensity measure gives infinite mass to the set of lines intersecting any convex body, and therefore the union
 of all lines from \(\Pi\) is \emph{not}
a random closed set.
Consequently it is not possible to make direct application of the classic theory of random closed sets (as surveyed, for example, in \citealp[Chapter 6]{ChiuStoyanKendallMecke-2013}).
Indeed the union of all lines from \(\Pi\)
is almost surely everywhere dense, and the theory for such sets is obscure (see for example \citealp{AldousBarlow-1981,Kendall-2000b}).
We therefore focus on sub-patterns of lines subject to a positive lower bound on their speeds.
Consider the set of lines hitting a convex body \(K\) and having speed-limits no slower than \(v_0>0\):
\[
 \left\{ (\ell, v)\;:\; \ell\in\hittingset{K} \text{ and } v\geq v_0\right\}\,.
\]
This has finite intensity measure, since \(\gamma>1\). 
It follows that the full set of marked lines \(\left\{ (\ell,v) : \ell\in\hittingset{K}\right\}\) can be expressed as
a countable union of random closed sets (indeed, locally finite unions of lines) when broken up according to different ranges of speed-limit.
Hence the union of all these lines does have a natural representation as a random \(F_\sigma\).
Indeed it can be related to random closed set theory as follows.

\begin{defn}\label{def:silhouette}
For given \(d\geq2\) and \(\gamma>1\), and fixed \(v_0>0\), 
let \(\Pi_{v_0}\) denote the \emph{proper} marked Poisson line process of all lines with speed-limits no slower than \(v_0\):
\[
 \Pi_{v_0}\quad=\quad \left\{(\ell,v)\in\Pi\;:\; v\geq v_0\right\}\,.
\]
The \emph{silhouette} \(\Silhouette_{v_0}\) of \(\Pi_{v_0}\) is the random closed set which is the union of all lines in \(\Pi_{v_0}\):
\begin{equation}\label{eqn:silhouette}
 \Silhouette_{v_0}\quad=\quad 
\bigcup\left\{\ell\;:\;(\ell, v)\in\Pi_{v_0}\right\}
\quad=\quad\bigcup\left\{\ell\;:\;(\ell,v)\in\Pi \text{ and } v\geq v_0\right\}\,.
\end{equation}
So \(\Silhouette_{0+}=\bigcup_{v_0>0}\Silhouette_{v_0}\) can be viewed as a random \(F_\sigma\).
\end{defn}
Note that almost surely the \emph{unmarked} line process \(\{\ell:(\ell,v)\in\Pi_{v_0}\}\) can be recovered from the silhouette \(\Silhouette_{v_0}\).
Moreover we can recover \(\Pi\) in entirety from 
the details of the changes in \(\Silhouette_v\) as \(v\) varies, since \(\Silhouette_v\setminus\Silhouette_{v+}\) is the locally finite union of
lines whose speed-limits are exactly equal to \(v\).
Indeed \(\Pi_v =\Pi_{v_-}=\bigcap_{v<v_0}\Pi_v\) for \(v_0>0\), and \(\Pi_v\) changes only at countably 
many values of \(v>0\), and, almost surely, for all \(v>0\) if \(\Silhouette_v\setminus\Silhouette_{v+}\) is non-empty then it is composed of just one line.
Thus
\[
\Pi\quad=\quad
\left\{(v,\ell)\;:\;\ell= \Silhouette_v\setminus\Silhouette_{v_+} \text{ for some } v>0\right\}\,.
\]

For notational convenience we introduce
the \emph{maximum speed-limit function} \(V\) holding everywhere on \(\Reals^d\) and imposed by \(\Pi\).
This is a random upper-semicontinuous function
 \(V:\Reals^d\to[0,\infty)\) defined in terms of its upper level sets: 
\begin{equation}\label{eqn:speed-limit}
 \left\{x\;:\; V(x)\geq v_0\right\} \quad=\quad \Silhouette_{v_0}
\quad=\quad\bigcup\left\{\ell\;:\;(\ell,v)\in\Pi_{v_0}\right\}\,.
\end{equation}
As with random dense line patterns, there is no satisfactory theory for general random upper-semicontinuous functions: we
use \(V\) merely as a convenient mathematical short-hand for the filtration of random closed sets \(\{\Silhouette_v:v>0\}\).

\section[Lipschitz paths]{Lipschitz paths and networks}\label{sec:lipschitz-paths}
This section introduces the notion of \(\Pi\)-paths; locally Lipschitz paths traversing a system of ``roads'' (supplied by  \(\Pi\)) furnished with speed-limits \emph{via} the maximum speed-limit function \(V\).
We will formulate this notion carefully and prove results yielding a variational context within which to study the minimum-time \(\Pi\)-paths (``\(\Pi\)=geodesics'') between specified points.
Care is needed, because we cannot assume that such paths are built up using consecutive sequences of intervals spent on different roads (and indeed this absolutely cannot be the case for dimension \(d>2\)). 
Instead the \(\Pi\)-paths are best viewed using such intervals arranged in a tree pattern rather than ordered sequentially (compare the use of trees to represent bounded variation paths in \citealp{HamblyLyons-2010}).

From henceforth we shall fix a dimension \(d\geq2\) (since the case \(d=1\) is trivial) and a parameter \(\gamma>1\) 
(note however that the discussion 
of this section will lead to imposition of progressively more severe
constraints on \(\gamma\)). 
Recall (for example, from \citealp[ch.5]{Evans-1998}) that a Lipschitz curve \(\xi:[0,T)\to\Reals^d\) satisfies \(|\xi(s)-\xi(t)|\leq A|s-t|\) when \(0\leq s<t<T\),
for some constant \(A>0\).
The least such \(A\) is the \emph{Lipschitz constant} \(\text{Lip}(\xi)\). 
The Lipschitz property for \(\xi\) using constant \(A\) holds if and only if \(\xi\) is absolutely continuous with almost-everywhere defined weak derivative \(\xi'\), with \(\operatorname{ess\ sup}_t|\xi'(t)|\leq A\).

We first discuss two preparatory results; a simple
lemma relating the velocity of a general Lipschitz path (defined for almost all time) to the directions of lines which it visits, 
and a corollary concerning the way in which such a Lipschitz path behaves
at the intersections formed by a pattern of lines. Intuitively speaking, if the path velocity has non-zero component normal to a given line 
then it must move away immediately, so for almost all time either the path 
runs on the line and has velocity parallel to the line, or the path does not lie on the line at all.
\begin{lemma}\label{lem:direction}
 Let \(\xi:[0,T)\to\Reals^d\) be a locally Lipschitz path and let \(\ell\) be a line, both lying in \(d\)-dimensional space \(\Reals^d\).
Suppose that \(\underline{e}\) is a unit vector parallel to the direction of \(\ell\).
Then the time-set
\[
\left\{t\in[0,T)\;:\; \xi(t)\in\ell \text{ and }\xi^\prime(t)\neq \langle\xi^\prime(t),\underline{e}\rangle\underline{e}\;\right\}                           
\]
is a Lebesgue-null subset of \([0,T)\).
\end{lemma}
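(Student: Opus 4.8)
The plan is to reduce everything to a one-dimensional real-analysis fact: the derivative of a Lipschitz function vanishes almost everywhere on its zero-set. I apply this to the components of the motion transverse to \(\ell\).

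First I would set up coordinates adapted to the line. Write \(\ell=\{a+s\underline{e}:s\in\Reals\}\) for some fixed point \(a\in\Reals^d\), and let \(P\) denote orthogonal projection of \(\Reals^d\) onto the orthogonal complement \(\underline{e}^\perp\), so that \(P(x)=x-\langle x,\underline{e}\rangle\underline{e}\). Then membership \(\xi(t)\in\ell\) is equivalent to vanishing of the transverse displacement \(g(t):=P(\xi(t)-a)\), a map \([0,T)\to\underline{e}^\perp\cong\Reals^{d-1}\). Since \(P\) is linear and \(\xi\) is locally Lipschitz, \(g\) is locally Lipschitz, hence differentiable for almost all \(t\), with \(g'(t)=P(\xi'(t))\) wherever \(\xi'(t)\) exists. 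The point is that the desired conclusion \(\xi'(t)=\langle\xi'(t),\underline{e}\rangle\underline{e}\) is exactly the statement \(P(\xi'(t))=0\), i.e.\ \(g'(t)=0\). So the lemma reduces to showing that \(g'=0\) for almost all \(t\) in the \emph{zero-set} \(A:=\{t:g(t)=0\}=\{t:\xi(t)\in\ell\}\).

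The key step is the scalar fact: if \(h:[0,T)\to\Reals\) is locally Lipschitz, then \(h'=0\) almost everywhere on \(Z:=\{h=0\}\). I would prove this via the Lebesgue density theorem. Almost every point of \(Z\) is simultaneously a point of density \(1\) of \(Z\) and a differentiability point of \(h\); fix such a \(t_0\). Density \(1\) supplies a sequence \(t_n\in Z\) with \(t_n\to t_0\) and \(t_n\neq t_0\), and since \(h\) vanishes on \(Z\) the difference quotients \((h(t_n)-h(t_0))/(t_n-t_0)\) are identically zero; as \(h'(t_0)\) exists, it must equal this limit, so \(h'(t_0)=0\). Applying this to each scalar component \(g_i\) of \(g\) gives \(g_i'=0\) almost everywhere on \(\{g_i=0\}\supseteq A\), and intersecting over the finitely many components yields \(g'=P(\xi')=0\) for almost all \(t\in A\). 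The exceptional time-set in the statement is contained in the union of these finitely many null sets together with the null set where \(\xi'\) fails to exist, hence is itself Lebesgue-null.

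I expect the only subtle point to be the scalar density-point argument; everything else (the coordinate reduction, linearity of \(P\), and the identification \(P(\xi')=0\iff\xi'\parallel\underline{e}\)) is routine. One should merely take care that the conclusion is applied component-wise and that \(A\), being the intersection of the component zero-sets, is contained in each of them, so that the finitely many null exceptional sets add up to a null set.
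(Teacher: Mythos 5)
Your proof is correct, but it follows a genuinely different route from the paper's. Both arguments begin with the same reduction: project transversally to \(\ell\), so that the claim becomes ``\(P\xi'\;(=g')\) vanishes for almost all \(t\) in the level set \(\{P\xi = P\ell\}\)''. From there the paper proceeds quantitatively: it decomposes the bad set into the pieces \(A^\pm_{j,n}\) on which a fixed transverse coordinate of the derivative exceeds \(1/n\), covers such a piece by countably many closed intervals whose endpoints are shrunk to lie on \(\ell\), and then uses the fundamental theorem of calculus for Lipschitz functions (the integral of \(P\xi'\) over each interval vanishes, because the endpoints project to the same point) to trap \(\Leb(A^+_{1,n})\) by \(\alpha n\) times the measure of the covering residue, which can be made arbitrarily small. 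You instead invoke the classical scalar fact that a locally Lipschitz function \(h\) has \(h'=0\) almost everywhere on its zero set, proved via the Lebesgue density theorem: at almost every point of \(Z=\{h=0\}\) the function is differentiable and \(Z\) has density one, so difference quotients along a sequence in \(Z\setminus\{t_0\}\) force \(h'(t_0)=0\); this is then applied componentwise, with the correct observation that \(A=\{g=0\}\) is contained in each component zero set, so finitely many null exceptional sets suffice. Your approach is shorter and rests on a standard measure-theoretic lemma (indeed one could even replace the density theorem by the remark that the isolated points of \(Z\) form a countable set); the paper's covering argument is more self-contained, avoiding the density theorem in favour of an explicit \(\eps\)-estimate in the spirit of the Sobolev-space machinery used later. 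Both are complete proofs of the lemma.
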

\begin{proof}
Let \(P\) denote projection onto the hyperplane normal to \(\ell\).
The line \(\ell\) projects under \(P\) to a singleton point which we denote by \(P\ell\).
Restricting to compact subsets of \([0,T)\) if necessary, it suffices to treat the case in which \(\xi\) is globally Lipschitz over \([0,T)\); let \(\alpha=\text{Lip}(\xi)\) be the corresponding Lipschitz constant, so that
\[
 |\xi^\prime(t)| \quad\leq\quad \alpha \qquad \text{for almost all }t\in[0,T)\,.
\]
The set \(\{t\in[0,T):P\xi(t)=P\ell \text{ and }P\xi^\prime(t)\neq0\}\) is the countable union of time-sets
\[
 A^\pm_{j,n} \quad=\quad \left\{t\in[0,T):P\xi(t)=P\ell \text{ and }\pm\langle P(\xi^\prime)(t), \underline{e}_j\;\rangle>\tfrac1n\right\}\,,
 \quad\text{for } j=1, \ldots, d-1\,,
\]
where \(\underline{e}_1, \ldots, \underline{e}_{d-1}\) are orthogonal unit vectors perpendicular to \(\ell\), 
so it suffices to show each \(A^\pm_{j,n}\) is Lebesgue-null (note that \(P\xi^\prime\) is not continuous, so that \(A^\pm_{j,n}\) may not be open). 

Without loss of generality, fix attention on \(A^+_{1,n}\); we will complete the proof by showing that this is Lebesgue-null.
Fix arbitrary \(\eps>0\) and cover \(A^+_{1,n}\) by a disjoint countable union of closed bounded intervals \(F=\bigcup_i [a_i,b_i]\), such that
\begin{equation}\label{eq:residue}
  \Leb(F\setminus A^+_{1,n})\quad<\quad \frac{\eps}{\alpha n}\,.
\end{equation}
Since \(\xi\) is continuous, we may shrink each interval \([a_i,b_i]\) so as to ensure that \(P\xi=P\ell\) at \(a_i\) and \(b_i\), while still preserving \eqref{eq:residue} and the 
covering property \(A^+_{1,n}\subseteq F\). Since \(P\xi=P\ell\) on the end-points of each \([a_i,b_i]\),
\[
\int_{F\setminus A^+_{1,n}} P\xi^\prime(t)\d{t} + \int_{A^+_{1,n}} P\xi^\prime(t)\d{t}
 \quad=\quad \int_F P\xi^\prime(t)\d{t}\quad=\quad \sum_i (P\xi(b_i)-P\xi(a_i))\quad=\quad 0\,.
\]
However we can apply the Lipschitz property of \(\xi\) to show that
\begin{equation*}
\alpha \Leb(F\setminus A^+_{1,n})
\quad\geq\quad \left|\int_{F\setminus A^+_{1,n}} P\xi^\prime(t)\d{t}\right| 
\quad\geq\quad \left|\int_{A^+_{1,n}} \langle P\xi^\prime(t),\underline{e}_1\rangle\d{t}\right|\\
\quad\geq\quad  \frac{1}{n}\Leb(A^+_{1,n})
\end{equation*}
and therefore
\[
 \Leb(A^+_{1,n}) \quad\leq\quad \alpha \, n \Leb(F\setminus A^+_{1,n})\quad\leq\quad \eps\,.
\]
Since \(\eps>0\) is arbitrary, the result follows.
\end{proof}
 As a direct consequence of Lemma \ref{lem:direction}, the only way in which a Lipschitz path can spend positive time on the intersection 
of two distinct lines is by being at rest.
\begin{cor}\label{cor:network-intersections}
 Consider a network in \(\Reals^d\) formed by the union of a countable number of distinct lines \(\ell_1\), \(\ell_2\), \ldots, and form the
\emph{intersection point pattern}
\[
 \mathcal{I} \quad=\quad \bigcup_{1\leq i < j < \infty} \ell_i\cap\ell_j\,.
\]
If \(\xi:[0,T)\to\Reals^d\) is a locally Lipschitz curve in \(\Reals^d\) then 
the time-set
\begin{equation}\label{eqn:nullity}
\left\{t\in[0,T)\;:\; \xi(t)\in\mathcal{I} \text{ and } |\xi^\prime(t)|>0\right\} 
\end{equation}
must be a Lebesgue-null subset of \([0,T)\).                                                                                                                                                
\end{cor}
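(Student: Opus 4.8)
The plan is to apply Lemma~\ref{lem:direction} one line at a time and then to exploit the elementary fact that two distinct lines meeting at a point have non-parallel directions, so that a velocity constrained to be parallel to both must vanish; this is exactly what makes the corollary a \emph{direct} consequence of the lemma.

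First I would fix, for each index $i$, a unit vector $\underline{e}_i$ parallel to $\ell_i$ and introduce the exceptional time-set
\[
 N_i \;=\; \{t\in[0,T)\;:\; \xi(t)\in\ell_i \text{ and } \xi^\prime(t)\neq \langle\xi^\prime(t),\underline{e}_i\rangle\underline{e}_i\}\,.
\]
Lemma~\ref{lem:direction} guarantees that each $N_i$ is Lebesgue-null, so the countable union $N=\bigcup_i N_i$ is null as well; to $N$ I would also adjoin the (null) set of times at which $\xi$ fails to be differentiable, so that $\Leb(N)=0$.

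Finally I would argue pointwise off $N$. Suppose $t\notin N$, so that $\xi^\prime(t)$ exists, and suppose $\xi(t)\in\mathcal{I}$, say $\xi(t)\in\ell_i\cap\ell_j$ with $i<j$. Since $t\notin N_i$ and $\xi(t)\in\ell_i$, the velocity $\xi^\prime(t)$ is parallel to $\ell_i$; likewise $t\notin N_j$ forces $\xi^\prime(t)$ parallel to $\ell_j$. The geometric crux is that the distinct lines $\ell_i$ and $\ell_j$ share the point $\xi(t)$ and so cannot be parallel---two lines sharing both a point and a direction would coincide---whence $\underline{e}_i$ and $\underline{e}_j$ are linearly independent and the only vector parallel to both is the zero vector. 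Thus $\xi^\prime(t)=0$, contradicting $|\xi^\prime(t)|>0$, and so the time-set in \eqref{eqn:nullity} is contained in $N$ and is Lebesgue-null. I anticipate no substantial obstacle: the content is carried entirely by Lemma~\ref{lem:direction} together with this one-line geometric observation, the only matters requiring care being the null-set bookkeeping and the explicit check that distinct concurrent lines have linearly independent direction vectors.
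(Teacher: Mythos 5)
Your proposal is correct and follows essentially the same route as the paper: both apply Lemma~\ref{lem:direction} to each line, take a countable union of the resulting null sets, and conclude via the observation that distinct concurrent lines have linearly independent directions, forcing the velocity to vanish. The only difference is bookkeeping—you union the exceptional sets over all lines and argue pointwise, while the paper reduces to a single pair of intersecting lines and unions over pairs—which is immaterial.
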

\begin{proof}
 Since \(\mathcal{I}\) is a countable union of points, it suffices to consider two distinct lines \(\ell_1\) and \(\ell_2\)
with non-empty intersection. 
Let \(\underline{e}_i\) be a unit vector parallel to the direction of \(\ell_i\). Note that, since the \(\ell_i\) are distinct and meet (and therefore cannot be parallel), it follows that 
the unit vectors \(\underline{e}_1\) and \(\underline{e}_2\) must be linearly independent.
By Lemma \ref{lem:direction}
the following two time-sets are both Lebesgue-null:
\begin{eqnarray*}
&\left\{t\in[0,T)\;:\; \xi(t)\in\ell_1 \text{ and } \xi^\prime(t) \neq \langle\xi^\prime(t),\underline{e}_1\rangle\underline{e}_1\;\right\}\,,\\
&\left\{t\in[0,T)\;:\; \xi(t)\in\ell_2 \text{ and } \xi^\prime(t) \neq \langle\xi^\prime(t),\underline{e}_2\rangle\underline{e}_2\;\right\}\,.
\end{eqnarray*}
If \(\xi^\prime\) is 
simultaneously parallel to the two
linearly independent \(\underline{e}_i\) then it must vanish. Consequently
the time-set defined by Equation \eqref{eqn:nullity} is a subset of the union of these two sets, and is therefore Lebesgue-null.
\end{proof}

We now define the fundamental notion explored in this paper.
\begin{defn}\label{def:pi-path}
 A \emph{\(\Pi\)-path} is a locally Lipschitz path in \(\Reals^d\) which for almost all time obeys the speed-limits imposed
by \(\Pi\) \emph{via} the random upper semicontinuous function \(V:\Reals_d\to[0,\infty)\). 
Expressed more precisely, the \(\Pi\)-path is given by an \(\Reals^d\)-valued function
\[
\xi:[0,T)\to\Reals^d\,
\]
defined up to a (possibly infinite) \emph{terminal time} \(T>0\), and satisfying the condition
that, for all \(v>0\), the time-set
\[
\left\{t\in[0,T)\;:\; |\xi^\prime(t)| > v \text{ and } \xi(t)\not\in\Silhouette_v\right\}                                           
\]
is a Lebesgue-null subset of \([0,\infty)\).
Let \(\Paths_T\) be the space of all \(\Pi\)-paths defined up to time \(T>0\), and let \(\Paths=\bigcup_{T>0}\Paths_T\) be the space of all \(\Pi\)-paths whatsoever.
\end{defn}
\begin{rem}\label{rem:pi-path}
Direct arguments using Lemma \ref{lem:direction} show that
the Lebesgue-null condition in Definition \ref{def:pi-path} can be replaced by any one of three equivalent conditions:
\begin{enumerate}
\item\label{def:pi-path-condition1} for almost all \(t\in[0,T)\) for which \(\xi^\prime(t)\neq0\), 
it is the case that \(\xi(t)\in\Silhouette_{|\xi^\prime(t)|}\);
\item\label{def:pi-path-condition2} for almost all \(t\in[0,T)\) for which \(\xi^\prime(t)\neq0\), 
it is the case that the line \(\xi(t)+\xi'(t)\Reals\) belongs to \(\Pi_{|\xi'(t)|}\);
\item\label{def:pi-path-condition3} for almost all \(t\in[0,T)\), 
it is the case that \(|\xi'(t)|\leq V(\xi(t))\).
\end{enumerate}
\end{rem}
Here 
\(\xi(t)+\xi'(t)\Reals\) denotes the line through \(\xi(t)\) with orientation \(\xi'(t)\).

Condition \ref{def:pi-path-condition1} implies that
for almost all \(t\in[0,T)\), if \(\xi(t)\not\in\bigcup_{v>0}\Silhouette_v\)
then \(\xi'(t)=0\). 


Crucially, \(\Pi\)-paths integrate the measurable orientation field provided by \(\Pi\) and obey the speed-limits imposed by \(\Pi\).
\begin{lem}\label{lem:pi-path-and-intersections}
 Suppose that \(\xi\) is a \(\Pi\)-path. For any \((\ell,v)\in\Pi\), let \(\underline{e}\) be a unit vector parallel to the direction of \(\ell\). Then
the following time-sets are both Lebesgue-null:
\begin{align}
 &\left\{t\in[0,T)\;:\; \xi(t)\in\ell, \xi'(t)\neq\langle\xi'(t),\underline{e}\rangle\underline{e}\;\right\}\,,\label{eq:pi-path-and-intersections1}\\
 &\left\{t\in[0,T)\;:\; \text{ for some }(\ell,v)\in\Pi, \xi(t)\in\ell, |\xi'(t)|>v\right\}\,.\label{eq:pi-path-and-intersections2}
\end{align}
\end{lem}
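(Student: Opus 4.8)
The first time-set, \eqref{eq:pi-path-and-intersections1}, is dealt with immediately: a \(\Pi\)-path is by definition locally Lipschitz, so Lemma \ref{lem:direction} applied to the single line \(\ell\) (with \(\underline{e}\) its unit direction vector) shows directly that \(\{t\in[0,T): \xi(t)\in\ell,\ \xi'(t)\neq\langle\xi'(t),\underline{e}\rangle\underline{e}\}\) is Lebesgue-null. No further work is needed for this part.

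For the second time-set, \eqref{eq:pi-path-and-intersections2}, the plan is to exploit the countability of \(\Pi\). Since \(\Pi\) is a Poisson point process it has at most countably many points almost surely, so the union over \((\ell,v)\in\Pi\) is a countable union, and it suffices to fix a single marked line \((\ell,v)\in\Pi\) and show that \(C=\{t\in[0,T): \xi(t)\in\ell,\ |\xi'(t)|>v\}\) is Lebesgue-null. I would then decompose \(C\) according to rational thresholds: writing \(C_{v'}=\{t:\xi(t)\in\ell,\ |\xi'(t)|>v'\}\) for \(v'\in\mathbb{Q}\cap(v,\infty)\), every \(t\in C\) has \(|\xi'(t)|>v\) and so lies in some \(C_{v'}\), giving \(C=\bigcup_{v'\in\mathbb{Q},\,v'>v}C_{v'}\); thus it is enough to prove each \(C_{v'}\) null.

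Now fix such a \(v'>v\). The defining property of a \(\Pi\)-path (Definition \ref{def:pi-path}, applied with threshold \(v'\)) says that \(\{t:|\xi'(t)|>v',\ \xi(t)\notin\Silhouette_{v'}\}\) is null; hence for almost every \(t\in C_{v'}\) we have \(\xi(t)\in\Silhouette_{v'}\), meaning that \(\xi(t)\) lies on some line \(\ell'\) carrying speed-limit at least \(v'\). Since \(\ell\) itself carries speed-limit exactly \(v<v'\), necessarily \(\ell'\neq\ell\), so \(\xi(t)\in\ell\cap\ell'\) is a crossing of two distinct lines of \(\Pi\) and therefore belongs to the intersection point pattern \(\mathcal{I}\) formed by the (countably many) lines of \(\Pi\). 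Moreover \(|\xi'(t)|>v'>0\) throughout \(C_{v'}\). Corollary \ref{cor:network-intersections} then applies directly to the collection of all lines of \(\Pi\), giving that \(\{t:\xi(t)\in\mathcal{I},\ |\xi'(t)|>0\}\) is null, whence \(C_{v'}\) is null up to the already-discarded exceptional set. Assembling the countably many null sets over \(v'\in\mathbb{Q}\cap(v,\infty)\) and over \((\ell,v)\in\Pi\) completes the argument.

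The step I expect to be the main obstacle is the bookkeeping at intersections: the crux is the observation that a point of the slow line \(\ell\) which nonetheless lies in the faster silhouette \(\Silhouette_{v'}\) must sit at a crossing of \(\ell\) with a genuinely faster, and hence distinct, line. Everything else is routine accumulation of countably many Lebesgue-null exceptional sets, relying on the almost-sure local finiteness of each \(\Silhouette_{v'}\) (Definition \ref{def:silhouette}) to guarantee that \(\mathcal{I}\) is a countable point set to which Corollary \ref{cor:network-intersections} genuinely applies.
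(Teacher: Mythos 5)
Your proposal is correct and follows essentially the same route as the paper: part \eqref{eq:pi-path-and-intersections1} is immediate from Lemma \ref{lem:direction}, and for part \eqref{eq:pi-path-and-intersections2} both arguments use the \(\Pi\)-path definition to place \(\xi(t)\), at times when it moves faster than \(v\) while on \(\ell\), on a strictly faster (hence distinct) line of \(\Pi\), so that \(\xi(t)\) lies in the intersection point pattern \(\mathcal{I}\), and then both conclude \emph{via} Corollary \ref{cor:network-intersections}. Your explicit decomposition over rational thresholds \(v'>v\) and over the countably many lines of \(\Pi\) merely unwinds what the paper packages into the ``equivalent definition'' of Remark \ref{rem:pi-path}, and is a perfectly sound (indeed slightly more careful) rendering of the same idea.
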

\begin{proof}
It is a direct consequence of Lemma \ref{lem:direction} that the time-set at \eqref{eq:pi-path-and-intersections1} is Lebesgue-null.

As for the time-set at \eqref{eq:pi-path-and-intersections2}, note that, for some Lebesgue null-set \(\mathcal{N}_1\), 
\begin{multline*}
 \left\{t\in[0,T)\;:\; |\xi'(t)|>v \text{ and } \xi(t)\in\ell \text{ for some }(\ell,v)\in\Pi\right\} \quad\subseteq\quad\\
\left\{t\in[0,T)\;:\; \xi(t)\in\ell \text{ and }\xi(t)\in\widetilde\ell \text{ for some } (\ell,v), (\widetilde\ell,w)\in\Pi\text{ with }w>v\right\}\cup \mathcal{N}_1
\end{multline*}
(use the equivalent \(\Pi\)-path definition \ref{def:pi-path}). But this implies that
\begin{multline*}
 \left\{t\in[0,T)\;:\; |\xi'(t)|>v \text{ and } \xi(t)\in\ell \text{ for some }(\ell,v)\in\Pi\right\} \\
\quad\subseteq\quad
\left\{t\in[0,T)\;:\; |\xi'(t)|>v \text{ and } \xi(t)\in\mathcal{I}\right\}\,,
\end{multline*}
where \(\mathcal{I}\) is the intersection point pattern derived from \(\Pi\) as in the statement of Corollary \ref{cor:network-intersections}.
By Corollary \ref{cor:network-intersections}, the time-set on the right-hand side
is Lebesgue-null.
\end{proof}

As noted at the start of this section, there are two related reasons for taking
this rather abstract approach to \(\Pi\)-paths, as opposed to working only with paths built up sequentially from segments of lines in \(\Pi\). Firstly, in dimension \(d>2\) there are no
non-trivial sequential \(\Pi\)-paths, since 
almost surely no two lines of \(\Pi\) will intersect. 
Secondly, even in the planar case of \(d=2\) we must consider
non-sequential \(\Pi\)-paths as possible
limits of simple \(\Pi\)-paths, for example when establishing the existence of minimizers of \(\Pi\)-path functionals (specifically, 
the functional yielding accumulated elapsed time).

Here and in the following, we establish a number of statements about \(\Paths_T\) and \(\Paths\), all of which should be qualified as holding ``almost-surely'', since they depend on the random construction of 
the marked line process \(\Pi\). Similarly ``constants'' are actually random variables measurable with respect to the line process \(\Pi\). 
Borrowing the terminology of Random Walks in Random Environments, it might be said that we are concerned with the quenched law based on the random environment provided by \(\Pi\).

It is immediate from the definition that
\(\Pi\)-paths are individually fully Lipschitz over time intervals in which the \(\Pi\)-path in question belongs to a specified compact set.
As a consequence, we can establish \emph{a priori} bounds on the length of any \(\Pi\)-path beginning in a specified compact set \(K\) and with terminal time at most \(T<\infty\)
(hence finite), so 
long as \(\gamma\geq d\). These bounds will depend on \(T<\infty\), \(K\), \(\gamma\), and also on the random marked line pattern \(\Pi\), and 
will allow us to deduce a uniform Lipschitz property for
all \(\Pi\)-paths from \(\Paths_T\) which start in \(K\). 
Moreover, if \(\gamma\geq d\) then the set \(\Paths_T\) is weakly closed, and therefore the family of 
all \(\Pi\)-paths from \(\Paths_T\) started in \(K\) is weakly compact. This allows us to make sense of the notion of \emph{\(\Pi\)-geodesics}, measuring path-length 
not as geometric length but using total time of passage along the path.
We first establish an \emph{a priori} upper bound for the Lipschitz constants of \(\Pi\)-paths begun near the origin and defined up to a finite time.

\begin{thm}[An \emph{a priori} bound for path space]\label{thm:a-priori-bound}
 Suppose that \(\gamma\geq d\geq2\). Fix \(T<\infty\) and \(r_o>0\), and consider a \(\Pi\)-path \(\xi\) with \(\xi(0)\in\ball(\origin,r_0)\).
Then there is \(C=C(\gamma, T, r_0, \Pi)<\infty\) (which is a constant when conditioned on the particular realization of the marked line process \(\Pi\)) such that almost surely
\[
 |\xi(t)| \quad\leq\quad C \qquad \text{for all }t\in[0,T)\,.
\]
\end{thm}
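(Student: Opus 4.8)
The plan is to reduce the multidimensional bound to a one-dimensional radial comparison. Write \(\rho(t)=|\xi(t)|\); since \(\xi\) is locally Lipschitz so is \(\rho\), with \(\rho'(t)\le|\xi'(t)|\) for almost all \(t\). By the equivalent formulation \ref{def:pi-path-condition3} of the \(\Pi\)-path property we have \(|\xi'(t)|\le V(\xi(t))\) almost everywhere. I would introduce the nondecreasing \emph{radial speed profile}
\[
 M(R)\quad=\quad\sup\left\{v\;:\;(\ell,v)\in\Pi,\ \ell\hits\ball(\origin,R)\right\}\,.
\]
Since any line through a point of \(\partial\ball(\origin,\rho(t))\) hits \(\ball(\origin,\rho(t))\), we get \(V(\xi(t))\le M(\rho(t))\), and hence the autonomous differential inequality \(\rho'(t)\le M(\rho(t))\) for almost all \(t\). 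Note that \(M(R)<\infty\) almost surely for each fixed \(R\): by \eqref{eqn:improper} and \eqref{eqn:normalized-measure} the lines of speed exceeding \(v\) that hit \(\ball(\origin,R)\) form a Poisson process whose mean count \(\tfrac12\omega_{d-1}R^{d-1}v^{-(\gamma-1)}\) tends to \(0\) as \(v\to\infty\).

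Next I would run a comparison against the envelope equation \(\bar\rho'=M(\bar\rho)\), \(\bar\rho(0)=r_0\). Setting \(G(R)=\int_{r_0}^R \d R'/M(R')\), a standard argument (examining the last up-crossing of the level \(r_0\) by \(\rho\), on which \(\rho\) stays bounded away from the origin so that \(M(\rho)\) is bounded below) gives \(G(\rho(t))\le t\), whence
\[
 |\xi(t)|\quad=\quad\rho(t)\quad\le\quad G^{-1}(t)\qquad\text{for all }t\in[0,T)\,.
\]
The right-hand side is finite provided the blow-up time \(\tau=G(\infty)=\int_{r_0}^\infty\d R/M(R)\) exceeds \(T\). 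Thus the theorem reduces to the single almost-sure statement \(\tau=\infty\), after which \(C:=G^{-1}(T)=\bar\rho(T)\) is the desired quenched constant.

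The remaining, and principal, task is therefore to show \(\tau=\infty\) almost surely whenever \(\gamma\ge d\). The computation above yields \(\Prob{M(R)>v}=1-\exp(-\tfrac12\omega_{d-1}R^{d-1}v^{-(\gamma-1)})\), so \(M(R)\) lives on the scale \(R^{\alpha}\) with \(\alpha=(d-1)/(\gamma-1)\le1\). When \(\gamma>d\) this exponent is strictly less than \(1\), and a routine Borel--Cantelli argument along \(R_k=2^k\) (using monotonicity of \(M\) to interpolate between dyadic radii) shows that almost surely \(M(R)\le C' R^{\alpha+\eps}\) eventually, for any small \(\eps>0\) with \(\alpha+\eps<1\); then \(\tau\ge (C')^{-1}\int^\infty R^{-(\alpha+\eps)}\,\d R=\infty\).

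The hard part will be the boundary case \(\gamma=d\), where \(\alpha=1\) and no exponent can be spared. Here I would work instead from the layer-cake identity
\[
 \tau\quad=\quad\int_0^\infty\frac{\left(\dist(\origin,\Silhouette_v)-r_0\right)_+}{v^2}\,\d v\,,
\]
using that \(\{R\ge r_0:M(R)<v\}=[r_0,\dist(\origin,\Silhouette_v))\). The distance \(\dist(\origin,\Silhouette_v)\) to the nearest line of speed at least \(v\) is Weibull, with \(\Prob{\dist(\origin,\Silhouette_v)>\rho}=\exp(-\tfrac12\omega_{d-1}v^{-(\gamma-1)}\rho^{d-1})\), and at the boundary sits on the scale \(v\), so the integrand is on the scale \(v^{-1}\) and the integral diverges at the level of expectations. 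To upgrade this to almost-sure divergence I would classify the lines of \(\Pi\) according to the dyadic annulus containing their nearest-approach distance to \(\origin\); these classes are \emph{independent}, which recasts the running maximum defining \(M\) as a record process built from independent block maxima, and a Borel--Cantelli argument then shows that infinitely many dyadic blocks each contribute at least a fixed positive amount to \(\tau\). I expect the delicate point to be precisely this control of the dependence inherent in the running maximum at the critical exponent; for \(d\ge3\) a logarithmic sharpening of the Borel--Cantelli estimate (bounding \(M(R)\le C R\log R\)) already suffices, so the genuinely critical instance is the planar case \(\gamma=d=2\).
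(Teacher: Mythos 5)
Your reduction is the same as the paper's: your radial profile \(M\) is exactly the paper's comparison speed \(\overline{V}\), and the paper likewise dominates \(|\xi|\) by the solution of \(y'=\overline{V}(y)\), \(y(0)=r_0\), so that everything rests on showing that the blow-up time \(\tau\) of this one-dimensional system is almost surely infinite. Where you genuinely diverge is in how that divergence is proved. The paper changes variables to ``generalized distance'' \(p=|r|^{d-1}\) and ``meta-slowness'' \(s=v^{-(\gamma-1)}\), under which \(\Pi\) projects to a homogeneous planar Poisson process; the record values of the speed then obey \(P_n-P_{n-1}=T_n/S_{n-1}\), \(S_n=U_nS_{n-1}\), and the product \(X_n=S_nP_n\) is a perpetuity \(X_n=U_n(T_n+X_{n-1})\) whose geometric ergodicity forces infinitely many order-one contributions to the elapsed time --- a single argument covering every \(\gamma\ge d\ge2\). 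Your argument instead splits into cases. The envelope bounds for \(\gamma>d\) and for \(\gamma=d\ge3\) are correct and more elementary than the paper's (first Borel--Cantelli along dyadic radii plus monotonicity of \(M\) suffices), and your observation that no envelope argument can work at \(\gamma=d=2\) is also correct: summability of \(\Prob{M(2^k)>2^k a_k}\asymp \sum_k a_k^{-1}\) is precisely incompatible with the divergence of \(\sum_k a_k^{-1}\) that the envelope would need to deliver.

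That leaves the critical planar case, where your proposal is a plan rather than a proof: you assert that infinitely many dyadic blocks contribute a fixed amount to \(\tau\), but correctly flag that the running maximum couples the blocks, so the second Borel--Cantelli lemma does not apply directly. This gap is real but closable along exactly the lines you indicate. Writing \(N_j\) for the maximal speed among lines of \(\Pi\) whose distance from \(\origin\) lies in \([2^j,2^{j+1})\), the \(N_j\) are independent with \(N_j\) distributed as \(2^jF_j\) for i.i.d.\ Fr\'echet variables \(F_j\) (\(\Prob{F_j\le y}=e^{-\pi/y}\) when \(\gamma=d=2\)), and the block-\(k\) contribution to \(\tau\) is at least
\[
 \int_{2^k}^{2^{k+1}}\frac{\d{R}}{M(R)}\;\geq\;\frac{2^k}{M(2^{k+1})}\;=\;\frac{1}{G_k}\,,
 \qquad
 G_k\;=\;\max_{j\le k}2^{-(k-j)}F_j\;=\;\max\left(F_k,\tfrac12 G_{k-1}\right)\,.
\]
This max-recursion is precisely the max-analogue of the paper's perpetuity: past blocks enter with geometric discounting, so \(G_k\) is stochastically bounded uniformly in \(k\) (indeed \(\Prob{G_k\le x}\ge e^{-2\pi/x}\)), and after sparsifying to well-separated indices \(k_m\) --- so that the discounted influence \(2^{-(k_m-k_{m-1})}G_{k_{m-1}}\) of the past is almost surely eventually negligible and the dominant contributions are independent across \(m\) --- the second Borel--Cantelli lemma gives \(G_{k_m}\le C\) infinitely often, hence \(\sum_k 1/G_k=\infty\) and \(\tau=\infty\). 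With that paragraph added your route is complete, and is arguably more elementary than the paper's; what the paper's change of variables and perpetuity buy is uniformity, dispatching all \(\gamma\ge d\ge2\) at once without case division.
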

\begin{proof}
 We consider a given realization of the speed-limit function \(V\) (equivalently, of \(\Pi\)). It suffices to obtain a lower bound on the time at
which \(\xi\) first exits \(\ball(\origin,r)\) for a given \(r>r_0\), and to show that this bound tends to infinity as \(r\to\infty\).
We achieve this by constructing a comparison with a one-dimensional system (controlled by the speed-limit function \(V\)), thus delivering an upper bound on \(|\xi|\),
and then by showing that with probability \(1\) the underlying configuration of \(\Pi\) is such that
the comparison system takes infinite time to diverge to infinity.

Our comparison system \(y:[0,\infty)\to[0,\infty)\) satisfies
\begin{align}\label{eq:comparison-system}
 y'(t) \;&=\; \overline{V}(y(t)) \;=\; \sup\{V(x)\;:\; |x|\leq y(t)\}
\;=\; \sup\{v:(\ell,v)\in\Pi, \ell\hits\ball(\origin,r)\}\,,\\
 y(0)  \;&=\; r_0\,.\nonumber
\end{align}
So \(y\) could be thought of as the distance from \(\origin\) of an idealized path which always travels radially at the maximum speed \(\overline{V}\)
available from \(\Pi\) within that distance. Standard comparison techniques show that if \(\xi\) is a \(\Pi\)-path then
\[
 y(t) \quad\geq\quad |\xi(t)| \qquad \text{for almost all \(t\) whenever }|\xi(0)|\leq r_0\,.
\]

The next step is to control the growth of the maximum speed-limit \(\overline{V}(y)\) as a function of \(y\). 
We introduce a nonlinear projection from marked line-space to the quadrant \([0,\infty)^2\):
\[
 (\ell, v) \quad\mapsto\quad \left(|r|^{d-1}, v^{-(\gamma-1)}\right)\quad=\quad (p, s)\,.
\]
We think of \(p=|r|^{d-1}\) as ``generalized distance'', and \(s=v^{-(\gamma-1)}\) as ``meta-slowness''. The fibres of the projection have zero invariant line measure.
Bearing in mind that \(\gamma\geq d\geq2\), and using the expression for the intensity measure at \eqref{eqn:improper}, also for \(\mu_d\) at \eqref{eqn:disintegration1}, the image of \(\Pi\) 
under the projection is a stationary Poisson point process on the quadrant \([0,\infty)^2\), with intensity measure
\[
 (\gamma-1)\frac{\omega_{d-1}}{2} \times(-v^{-\gamma}\d{v}) \times\d(|r|^{d-1})
\quad=\quad
\frac{\omega_{d-1}}{2} \,\d(v^{-(\gamma-1)}) \,\d(|r|^{d-1})
\quad=\quad
\frac{\omega_{d-1}}{2} \,\d{p}\,\d{s}
\]
(recall that \(\omega_{d-1}\) is the hyper-surface area of the unit ball in \(\Reals^d\)).

This projection to a planar point process
delivers a Poisson point process \(\widetilde\Pi\) of constant intensity \(\tfrac12{\omega_{d-1}}\)
 in the quadrant. Note that
\[
 \left(\overline{V}(r)\right)^{-(\gamma-1)}\quad=\quad\inf\left\{s : (p,s) \in \widetilde\Pi \text{ and } p\leq r^{d-1}\right\}\,,
 \qquad \text{ for } r>0\,.
\]
Accordingly we can use \eqref{eq:comparison-system} to obtain information about the sequence of generalized distances at which the meta-slowness of the comparison system \(y\) changes,
deriving a recursive expression (compare the use of perpetuities in \citealp{Kendall-2011b,Kendall-2014a}). 
Initially the random meta-slowness of \(y\) at
generalized distance \(P_0=r_0^{d-1}\) is
\[
 S_0 \text{ of distribution } \text{Exponential}\left(\tfrac12{\omega_{d-1}} \, P_0\right)\,.
\]
Let \(P_0<P_1<P_2<\ldots\) be the sequence of generalized distances at which the meta-slowness changes through values \(S_0>S_1>S_2>\ldots\). 
Poisson process arguments show
\begin{align}\nonumber
 P_n-P_{n-1} \quad&=\quad
 \frac{1}{S_{n-1}} T_n \qquad \text{ for } T_n 
\text{ of distribution } \text{Exponential}\left(\tfrac12{\omega_{d-1}}\right)\,,\\
S_n \quad&=\quad 
 U_n S_{n-1} \qquad \text{ for } U_n 
\text{ of distribution } \text{Uniform}(0,1)\,.
\label{eqn:meta-slowness-recursion}
\end{align}
Moreover the \(T_i\)'s and \(U_i\)'s are all independent.

It is useful to re-organize this recursion
so as to recognize the product \(X_n=S_n P_n\)
as a Markov chain (in fact a perpetuity):
\begin{equation}\label{eq:perpetuity}
 X_n=S_n P_n \quad=\quad U_n (T_n + S_{n-1} P_{n-1})\quad=\quad U_n(T_n+X_{n-1})\,.
\end{equation}
Iteration shows that this perpetuity has a limiting distribution, expressible as an infinite 
almost-surely convergent sum
\[
 U_1 T_1 + U_1 U_2 T_2 + U_1U_2U_3T_3 + \ldots
\]
(stronger results on perpetuities can be found in
the foundational paper of \citealp{Vervaat-1979}). Indeed, Markov chain arguments show that \(X_n\) converges
to this distribution in total variation with geometric ergodicity \citep[Ch.~15 especially Theorem 15.0.1]{MeynTweedie-1993}. This follows by noting that the Markov chain
\(\{X_n:n\geq0\}\) is Lebesgue-irreducible and satisfies a geometric Foster-Lyapunov condition based on (for example)
\begin{multline*}
 \Expect{1+X_n \;|\; X_{n-1}}\quad=\quad
1 + \half\left(\frac{2}{\omega_{d-1}}+X_{n-1}\right)\\
\quad\leq\quad
\begin{cases}
 \frac{2}{3} (1+X_{n-1}) & \text{ if } 1+X_{n-1} > 3\left(1+ \frac{2}{\omega_{d-1}}\right)\,,\\
 \frac{2}{3} (1+X_{n-1})+\frac12 +\frac{1}{\omega_{d-1}} & \text{ otherwise. }
\end{cases}
\end{multline*}
The interval \([0, 3(1 + \tfrac{2}{\omega_{d-1}}))\) is a small set for the Markov chain \(X\)
(since \(U_n\) and \(T_n\) are independent and
have continuous densities which are strictly positive over their ranges of \([0,1]\) and \([0,\infty)\)), 
so this is indeed a geometric Foster-Lyapunov condition, and establishes geometric ergodicity for the Markov chain \(X\).

Consider the elapsed actual time between generalized positions \(P_{n-1}\) and \(P_n\), namely
\[
 (S_{n-1})^{\frac{1}{\gamma-1}} \times \left(P_n^{\frac{1}{d-1}}-P_{n-1}^{\frac{1}{d-1}}\right)\,.
\]
Summing over \(n\), and using the requirement that \(\gamma\geq d\), the total time till \(y\) reaches infinity is given by the sum
\begin{multline*}
 \sum_n (S_{n-1})^{\tfrac{1}{\gamma-1}} \times \left(P_n^{\frac{1}{d-1}}-P_{n-1}^{\frac{1}{d-1}}\right) 
\;=\;
 \sum_n (S_{n-1})^{\tfrac{1}{\gamma-1}-\tfrac{1}{d-1}} \times \left((S_{n-1}P_n)^{\frac{1}{d-1}}-(S_{n-1}P_{n-1})^{\frac{1}{d-1}}\right)\\
\quad=\quad
 \sum_n (S_{n-1})^{\tfrac{1}{\gamma-1}-\tfrac{1}{d-1}} \times \left((S_{n-1}P_{n-1}+T_n)^{\frac{1}{d-1}}-(S_{n-1}P_{n-1})^{\frac{1}{d-1}}\right)\,.
\end{multline*}
If \(\gamma=d\) then \((S_{n-1})^{\tfrac{1}{\gamma-1}-\tfrac{1}{d-1}}=1\).
If on the other hand \(\gamma>d\) then the exponent \(\tfrac{1}{\gamma-1}-\tfrac{1}{d-1}\) is negative, while from the recurrence it follows that \(S_n\to0\) almost surely.
In either case, the above sum diverges if the same can be said for
\begin{equation}\label{eqn:infinite-sum}
 \sum_n \left((S_{n-1}P_{n-1}+T_n)^{\frac{1}{d-1}}-(S_{n-1}P_{n-1})^{\frac{1}{d-1}}\right)\,.
\end{equation}
However geometric ergodicity of \(X_n=S_nP_n\), together with independence of the \(T_i\)'s and \(U_i\)'s, and the exponential distribution of \(T_n\), allows us to deduce that 
almost surely infinitely many of these summands must satisfy
\[
 (S_{n-1}P_{n-1}+T_n)^{\frac{1}{d-1}}-(S_{n-1}P_{n-1})^{\frac{1}{d-1}} \quad>\quad 1\,.
\]
Hence it follows that the infinite sum \eqref{eqn:infinite-sum} almost surely diverges, and therefore \(y\) 
will almost surely takes infinite time to reach infinity. 
This suffices to prove the
theorem.
\end{proof}

\begin{rem}\label{rem:equivalence}
 It can be shown that in case \(\gamma<d\) then there are indeed \(\Pi\)-paths which reach infinity in finite time, and even in finite expected time.
\end{rem}

\begin{rem}
 We will see (Theorem \ref{thm:connection}) that if we require \(\gamma>d\) then in addition we can connect specified pairs of points by \(\Pi\)-paths in finite time. 
\end{rem}

We can now deduce the existence of an \emph{a priori} bound on the global Lipschitz constant of \(\Pi\)-paths of finite length begun in a fixed compact set. 
For any constant \(C\), the lines from \(\Pi\) meeting \(\ball(\origin,C)\) 
have speed-limits bounded above by \(\overline{V}(C)\), a random value depending on the random environment \(\Pi\). 
Fixing \(T<\infty\) and \(r_0>0\), and taking \(C\) to be the random value depending on \(\Pi\) in the statement of Theorem \ref{thm:a-priori-bound}, then
Remark \ref{rem:pi-path} Condition \ref{def:pi-path-condition3} implies that 
the \(\Pi\)-paths in \(\Paths_T\) beginning in \(\ball(\origin,r_0)\) satisfy a uniform Lipschitz property with random Lipschitz constant depending on \(\Pi\), \(r_0\), and \(T\).
Note that \(r_0>0\) can be chosen arbitrarily.

A closure result for \(\Paths_T\) follows immediately.
Recall (again using results expounded in \citealp[ch.5]{Evans-1998}) 
that the Sobolev space \(\Sobolev([0,T)\to\Reals^d)\) can be viewed as the space of absolutely continuous curves \(\xi:[0,T)\to\Reals^d\)
whose first derivatives \(\xi'\) satisfy \(\int_0^T|\xi'|^2\,\d{t}<\infty\). 
Thus \(\Sobolev([0,T)\to\Reals^d)\) forms a separable Hilbert space when furnished with an inner-product norm given by
\(\sqrt{\int_0^T|\xi|^2\,\d{t}+\int_0^T|\xi'|^2\,\d{t}}\). 
Recall also the Hilbert-space fact that bounded and weakly-closed subsets of \(\Sobolev([0,T)\to\Reals^d)\) are weakly compact.

\begin{lem}[Closure of path space]\label{lem:closure}
 Suppose \(\gamma\geq d\). For finite \(T\), the path space \(\Paths_T\) is closed in the Sobolev space \(\Sobolev([0,T)\to\Reals^d)\).
\end{lem}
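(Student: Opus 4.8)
The plan is to reduce the closure claim to the pointwise speed bound of Remark~\ref{rem:pi-path}, condition~\ref{def:pi-path-condition3}: a locally Lipschitz curve $\xi$ lies in $\Paths_T$ precisely when $|\xi'(t)|\leq V(\xi(t))$ for almost all $t$. So I would take $\xi_n\in\Paths_T$ with $\xi_n\to\xi$ in $\Sobolev([0,T)\to\Reals^d)$ and show that the limit $\xi$ is itself a locally Lipschitz path obeying this bound. Note that it suffices to argue along a subsequence, since the target conclusion $|\xi'|\leq V(\xi)$ almost everywhere is a statement about $\xi$ alone.

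First I would harvest the consequences of $\Sobolev$-convergence in one time dimension. Since $T<\infty$, the embedding $\Sobolev([0,T))\hookrightarrow C^{0,1/2}([0,T])$ (an immediate Cauchy--Schwarz estimate on $\int\xi'$) converts $\Sobolev$-convergence into uniform convergence, so $\xi_n\to\xi$ uniformly and, being $\Sobolev$-bounded, the $\xi_n$ (and $\xi$) all take values in a common compact ball $K=\ball(\origin,R)$. Two further facts follow. First, because $\gamma>1$, almost surely only finitely many lines of $\Pi$ meeting $K$ carry speed above any given level, so the maximal speed-limit $\ol{V}(K)=\sup\{V(x):x\in K\}$ is almost surely finite; since each $\xi_n$ satisfies $|\xi_n'|\leq V(\xi_n)\leq\ol{V}(K)$ almost everywhere, the family $\{\xi_n\}$ is uniformly $\ol{V}(K)$-Lipschitz, whence its uniform limit $\xi$ is also $\ol{V}(K)$-Lipschitz\,---\,in particular $\xi$ is locally Lipschitz, as a $\Pi$-path must be. Secondly, from $L^2$-convergence $\xi_n'\to\xi'$ I pass to a subsequence along which $\xi_n'(t)\to\xi'(t)$ for almost every $t\in[0,T)$.

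The crux is to transfer the speed bound across the limit, and here the upper semicontinuity of $V$ does the essential work. Fix $t$ at which $\xi_n'(t)\to\xi'(t)$ (almost every $t$ qualifies). Uniform convergence gives $\xi_n(t)\to\xi(t)$, so upper semicontinuity of $V$\,---\,equivalently, closedness of the super-level sets $\Silhouette_v=\{V\geq v\}$\,---\,yields $\limsup_n V(\xi_n(t))\leq V(\xi(t))$. Combining this with admissibility of each $\xi_n$,
\[
 |\xi'(t)| \;=\; \lim_n |\xi_n'(t)| \;\leq\; \limsup_n V(\xi_n(t)) \;\leq\; V(\xi(t))\,.
\]
Since this holds for almost every $t$, the limit $\xi$ satisfies condition~\ref{def:pi-path-condition3} and so belongs to $\Paths_T$, proving closure.

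The only delicate point is the direction of the semicontinuity estimate: it is vital that $V$ be \emph{upper} (not lower) semicontinuous, so that the speed-limits enjoyed by the approximating paths cannot collapse below the limiting speed-limit. This is exactly the feature built into $V$ through the representation of $\Silhouette_v$ as closed unions of lines, and it is what lets the (non-convex) set $\Paths_T$ be norm-closed. I would expect no further obstacle: the argument is short once the upper-semicontinuity step is isolated, and the stated hypothesis $\gamma\geq d$ enters only through $\gamma>1$, which secures the almost-sure finiteness of $\ol{V}(K)$ underlying the uniform Lipschitz bound.
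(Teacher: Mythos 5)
Your proof is correct, but it takes a genuinely different route from the paper's. Both arguments share the same skeleton (pass to a subsequence along which \(\xi_n'\to\xi'\) almost everywhere), yet they transfer admissibility to the limit by different mechanisms. The paper works with Condition \ref{def:pi-path-condition2} of Remark \ref{rem:pi-path} (the line \(\xi(t)+\xi'(t)\Reals\) belongs to \(\Pi_{|\xi'(t)|}\)): at a time \(t\) with \(|\xi'(t)|>u>0\) it uses local finiteness of \(\Pi_u\) to argue that the lines \(\xi_n(t)+\xi_n'(t)\Reals\) either stabilize to a fixed line, which then carries the limiting motion, or force \(\xi_n(t)\) to converge to an intersection point of \(\Pi_u\), the latter case being disposed of by Corollary \ref{cor:network-intersections}. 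You instead work with Condition \ref{def:pi-path-condition3} (\(|\xi'|\leq V(\xi)\) a.e.) and replace this geometric dichotomy by the single observation that \(V\) is upper semicontinuous (closedness of the silhouettes \(\Silhouette_v\), which holds almost surely since each \(\Pi_v\) is locally finite), giving \(|\xi'(t)|=\lim_n|\xi_n'(t)|\leq\limsup_n V(\xi_n(t))\leq V(\xi(t))\). This is shorter, isolates exactly the topological property of \(V\) being used, and makes explicit that only \(\gamma>1\) is needed; the stated hypothesis \(\gamma\geq d\) plays no essential role in either proof of this lemma. What the paper's route buys is structural information (it identifies the actual line of \(\Pi\) along which the limit travels) and reuse of the machinery of Lemma \ref{lem:direction} and Corollary \ref{cor:network-intersections}; what yours buys is brevity. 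One caveat: your argument leans on a.e.\ convergence of derivatives and so is tied to strong (norm) convergence; it would not survive the upgrade to weak closure (Theorem \ref{thm:weak-closure}), where the paper, interestingly, also switches to Condition \ref{def:pi-path-condition3} but substitutes interval-wise Lipschitz estimates for derivative convergence. A last small point: when you fix \(t\) and invoke ``admissibility of each \(\xi_n\)'', you need \(t\) to avoid the union over \(n\) of the exceptional null sets of the \(\xi_n\); this is a countable union, so harmless, but worth stating.
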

\begin{proof}
 Consider any sequence \(\xi_1\), \(\xi_2\), \ldots of paths drawn from \(\Paths_T\). 
Suppose \(\xi_n\to \xi\) when considered as elements of \( \Sobolev([0,T)\to\Reals^d)\). By Sobolev space arguments,
taking a convergent subsequence if necessary, we may suppose that
\begin{align*}
 \xi_n(t) \quad&\to\quad \xi(t) \qquad \text{for all }t\in[0,T)\,;\\
 \xi_n'(t)\quad&\to\quad \xi'(t)\qquad \text{for almost all }t\in[0,T)\,.
\end{align*}
Consider the following time-set, which can be seen to be Lebesgue-null by 
combining the Lipschitz property of \(\xi\), the choice of the convergent sequence, and 
using the equivalent definition of \(\Pi\)-paths given by Remark \ref{rem:pi-path} Condition \ref{def:pi-path-condition2}:
\begin{align*}
 \mathcal{N}\quad=\quad\Big\{t\;:\; &\text{ either } \xi'(t) \text{ does not exist, or } \xi'_n(t)\not\to \xi'(t),\\
 &\text{ or, for some \(n\), } \xi'_n(t)\neq0 \text{ but }\xi_n(t)+\xi'_n(t)\Reals\not\in \Silhouette_{|\xi'_n(t)|}\Big\}\,.
\end{align*}

Fix attention on \(t\not\in\mathcal{N}\) for which \(|\xi'(t)|>u>0\). Then \(\xi_n'(t)\to\xi'(t)\), so 
\[
 \xi_n(t)+\xi'_n(t)\Reals \;\in\; \Pi_u \qquad\text{ for all large enough }n\,.
\]
Furthermore, 
since \(\Pi_u\) yields a locally finite line process,
\begin{align*}
\text{either }
& \xi_n(t)+\xi'_n(t)\Reals \text{ eventually equals } \ell_t \text{ for some fixed } \ell_t\text{ from }\Pi_u \\
& \qquad\qquad\qquad\text{ and moreover } \xi(t)+\xi'(t)\Reals=\ell_t \\
& \text{or } 
\xi_n(t) \quad\to\quad \text{ an intersection point of the line process }\Pi_u\,.
\end{align*}
In the first case, since \(\ell_t\) is a line from \(\Pi_{|\xi'_n(t)|}\) for all large enough \(n\),
it follows from the convergence \(\xi_n'(t)\to\xi'(t)\) that \(\ell_t\) is a line from \(\Pi_{|\xi'(t)|}\).

In the second case, Corollary \ref{cor:network-intersections} implies that
\[ 
 \left\{t\;:\; \xi(t) \text{ is an intersection point of \(\Pi\) and } \xi'(t)\neq0\right\}
\]
must be a null time-set. 
Either way,
up to a Lebesgue-null time-set,
\begin{itemize}
 \item[] either \(\xi'(t)=0\),
 \item[] or \(\xi(t)+\xi'(t)\Reals\in \Pi_{|\xi'(t)|}\).
\end{itemize}
Hence \(\xi\) is a \(\Pi\)-path (using Remark \ref{rem:pi-path}, specifically the equivalent defining Condition \ref{def:pi-path-condition2} for \(\Pi\)-paths), hence belongs to \(\Paths_T\).
\end{proof}

We can improve on this lemma to show that \(\Paths_t\) is \emph{weakly closed}, from which there follows a useful compactness result.
\begin{thm}[Weak closure of path space]\label{thm:weak-closure}
Suppose \(\gamma\geq d\). For finite \(T\),  the path space \(\Paths_T\) is weakly closed in \(\Sobolev([0,T)\to\Reals^d)\).
\end{thm}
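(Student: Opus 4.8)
The plan is to promote the strong (norm) closure already established in Lemma~\ref{lem:closure} to weak closure. The essential obstacle is that \(\Paths_T\) is not convex: the pointwise speed constraint \(|\xi'(t)|\le V(\xi(t))\) is destroyed by taking convex combinations, so one cannot simply appeal to the Hahn--Banach/Mazur fact that a strongly closed convex set is weakly closed. My strategy is therefore to localize the constraint, in both space and time, to regions on which it does become convex, and to exploit the compactness of the Sobolev embedding in the one-dimensional time variable in order to transfer information from the limit back to the approximating sequence.

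First I would take a sequence \(\xi_1,\xi_2,\ldots\in\Paths_T\) with \(\xi_n\rightharpoonup\xi\) weakly in \(\Sobolev([0,T)\to\Reals^d)\). Weak convergence forces the sequence to be bounded in \(\Sobolev\); since \([0,T)\) is one-dimensional and bounded, \(\Sobolev([0,T)\to\Reals^d)\) embeds compactly into \(C([0,T]\to\Reals^d)\) (a bounded set in \(\Sobolev\) is uniformly \(\tfrac12\)-H\"older by Cauchy--Schwarz, hence relatively compact by Arzel\`a--Ascoli). As any \(C\)-limit point must agree with the weak limit \(\xi\), the whole sequence converges to \(\xi\) uniformly on \([0,T]\). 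Simultaneously \(\xi_n'\rightharpoonup\xi'\) weakly in \(L^2\).

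The heart of the argument is the following localization. Fix a rational speed \(v>0\). The silhouette \(\Silhouette_v=\{x:V(x)\ge v\}\) is closed, so its complement \(U_v=\Reals^d\setminus\Silhouette_v\) is open, and by Definition~\ref{def:pi-path} every \(\xi_n\) satisfies \(|\xi_n'|\le v\) almost everywhere on \(\{t:\xi_n(t)\in U_v\}\). The open set \(G_v=\{t\in[0,T):\xi(t)\in U_v\}\) is a countable disjoint union of intervals; on any closed subinterval \(J\) of one of them the compact set \(\xi(J)\) lies at a positive distance from \(\Silhouette_v\), so uniform convergence guarantees \(\xi_n(J)\subset U_v\) for all large \(n\), whence \(|\xi_n'|\le v\) a.e.\ on \(J\) for all large \(n\). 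Now the constraint set \(\{f\in L^2(J\to\Reals^d):|f(t)|\le v\text{ a.e.}\}\) is convex, bounded and strongly closed, hence weakly closed; since \(\xi_n'\rightharpoonup\xi'\) weakly in \(L^2(J)\), the weak limit inherits the bound, giving \(|\xi'|\le v\) a.e.\ on \(J\). Exhausting each component of \(G_v\) by such \(J\) yields \(|\xi'(t)|\le v\) for almost all \(t\) with \(\xi(t)\in U_v\).

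Finally I would assemble these facts across all rational \(v\). If \(|\xi'(t)|>V(\xi(t))\) held on a set of positive measure, we could choose a rational \(q\) with \(|\xi'(t)|>q>V(\xi(t))\) on a positive-measure subset; but \(V(\xi(t))<q\) means \(\xi(t)\in U_q\), and there we have just shown \(|\xi'(t)|\le q\), a contradiction. Hence \(|\xi'(t)|\le V(\xi(t))\) almost everywhere, so \(\xi\in\Paths_T\) by the equivalent characterization in Remark~\ref{rem:pi-path}, and \(\Paths_T\) is weakly closed. I expect the delicate point to be exactly the transfer step: weak \(L^2\) convergence of the velocities carries no pointwise information, so it is crucial that the speed constraint be recast, on the excursion intervals into the open region \(U_v\), as membership in a convex \(L^2\)-ball (which is weakly closed), with uniform convergence supplying the inclusion \(\xi_n(J)\subset U_v\) that makes the constraint available for the tail of the sequence.
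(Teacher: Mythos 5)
Your proof is correct, and its skeleton matches the paper's: pass from weak Sobolev convergence to uniform convergence, localize to closed subintervals \(J\) of the components of the open time-set where the limit \(\xi\) avoids \(\Silhouette_v\), use the positive distance from the compact image \(\xi(J)\) to \(\Silhouette_v\) plus uniform convergence to force \(\xi_n(J)\subset\Reals^d\setminus\Silhouette_v\) for large \(n\), and conclude via the equivalent Condition \ref{def:pi-path-condition3} of Remark \ref{rem:pi-path}. Where you genuinely differ is the transfer step, i.e.\ how the bound \(|\xi_n'|\leq v\) a.e.\ on \(J\) is passed to the limit. The paper stays entirely at the level of the paths: \(|\xi_n'|\leq v-\eps\) a.e.\ means \(\xi_n\) is \(\Lipschitz(v-\eps)\) on \(J\), and Lipschitz constants persist under uniform convergence, so \(\xi\) is \(\Lipschitz(v-\eps)\) there (the paper inserts an \(\eps\)-buffer \(\Silhouette_{v-\eps}\), available by local finiteness of \(\Pi_{v'}\), though as your argument shows the buffer is not actually needed). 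You instead work at the level of the derivatives: recast the constraint as membership of \(\xi_n'|_J\) in the convex, strongly closed set \(\{f\in L^2(J):|f|\leq v \text{ a.e.}\}\), which is weakly closed by Mazur's theorem, and use \(\xi_n'\rightharpoonup\xi'\) in \(L^2\). Both mechanisms are sound; the paper's is more elementary in that, after the uniform-convergence step, it never touches the weak topology again (so it really proves that uniform limits of \(\Pi\)-paths with bounded Sobolev norm are \(\Pi\)-paths), while yours is the standard convexity/lower-semicontinuity device of the calculus of variations and would adapt unchanged to more general convex velocity constraints where a Lipschitz-persistence argument is less immediate.
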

\begin{proof}
  Consider any sequence \(\xi_1\), \(\xi_2\), \ldots in \(\Paths_T\). Suppose \(\xi_n\to \xi\) \emph{weakly} in \(\Sobolev([0,T)\to\Reals^d)\).
We will invoke the equivalent defining Condition 
\ref{def:pi-path-condition3} from Remark \ref{rem:pi-path}, namely, if we can show that \(|\xi'(t)|\leq V(\xi(t))\) for almost all \(t\in[0,T)\) then \(\xi\) is a \(\Pi\)-path.
As an immediate consequence, this will imply that \(\Paths_T\) is weakly closed.

If \(T<\infty\) then weak convergence in \(\Sobolev([0,T)\to\Reals^d)\) implies uniform convergence;
in particular this implies \(\xi\) is continuous.

Fix \(v>0\), and consider a non-empty time interval \([r,s]\subseteq[0,T)\), such that \([r,s]\) is contained in a single connected component of the open time-set 
\(\{t\in[0,T):\xi(t)\not\in\Silhouette_{v}\}\). Thus the compact set \(\{\xi(t):r\leq t\leq s\}\) does not intersect the closed set \(\Silhouette_{v}\)
and indeed
will not intersect the closed set \(\Silhouette_{v-\eps}\) for some (perhaps small) \(\eps\in(0,v)\).
By the uniform convergence of \(\xi_n\) to \(\xi\), 
we know that for all large enough \(n\) 
the compact set \(\{\xi_n(t):r\leq t\leq s\}\) does not intersect \(\Silhouette_{v-\eps}\); since \(\xi_n\) is a \(\Pi\)-path, this implies
that \(|\xi_n'(t)|<v-\eps\) for almost all \(t\in[r,s]\), and hence that \(\xi_n\) is Lipschitz with Lipschitz constant \(v-\eps\) over the time interval \([r,s]\).
But the uniform convergence of \(\xi_n\) to \(\xi\) implies that \(\xi\) itself is Lipschitz with Lipschitz constant \(v-\eps\) over the time interval \([r,s]\),
therefore that \(|\xi'(t)|\leq v-\eps<v\) for almost all \(t\in[r,s]\).

Expressing the open time-set 
\(\{t\in[0,T):\xi(t)\not\in\Silhouette_{v-\eps}\}\) as a countable union of closed intervals, 
and letting \(\eps\to0\),
it follows that for almost all \(t\in[0,T)\) if \(\xi(t)\not\in\Silhouette_{v}\)
then \(|\xi'(t)|<v\). Consequently \(|\xi'(t)|\leq V(\xi(t))\) for almost all \(t\in[0,T)\) as required.
\end{proof}

It follows immediately that bounded subsets of \(\Paths_T\) are weakly precompact, and this is crucial for the discussion of minimum-time \(\Pi\)-paths given in the
next section.
\begin{cor}[Weak compactness and path space]\label{cor:compactness}
Suppose \(\gamma\geq d\). Fix \(T>0\) and consider the set of paths in \(\Paths_T\) which begin in a compact set \(K\).
This set is weakly compact as a subset of \(\Sobolev([0,T)\to\Reals^d)\).
\end{cor}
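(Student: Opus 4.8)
The plan is to recognize this as essentially a packaging of the two substantial results just proved: Theorem \ref{thm:a-priori-bound} supplies boundedness and Theorem \ref{thm:weak-closure} supplies weak closure, after which one invokes the Hilbert-space fact (recalled just before Lemma \ref{lem:closure}) that bounded weakly-closed subsets of \(\Sobolev([0,T)\to\Reals^d)\) are weakly compact. So I would reduce the claim to two assertions about the set \(\Paths_T^K := \{\xi\in\Paths_T : \xi(0)\in K\}\): that it is bounded in the Sobolev norm, and that it is weakly closed.

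First I would establish boundedness in \(\Sobolev([0,T)\to\Reals^d)\). Since \(K\) is compact it lies in some ball \(\ball(\origin,r_0)\). By Theorem \ref{thm:a-priori-bound} there is a \(\Pi\)-measurable constant \(C=C(\gamma,T,r_0,\Pi)<\infty\) with \(|\xi(t)|\leq C\) for all \(t\in[0,T)\) and all \(\xi\in\Paths_T^K\); this at once gives the uniform position bound \(\int_0^T|\xi|^2\,\d{t}\leq T C^2\). For the derivative I would use confinement to \(\ball(\origin,C)\): every relevant point \(\xi(t)\) can carry speed-limit at most \(\overline{V}(C)=\sup\{v:(\ell,v)\in\Pi,\ \ell\hits\ball(\origin,C)\}\), which is almost surely finite because the sub-process of lines of speed at least \(1\) hitting the ball is almost surely a finite point process. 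Remark \ref{rem:pi-path} Condition \ref{def:pi-path-condition3} then yields \(|\xi'(t)|\leq V(\xi(t))\leq\overline{V}(C)\) for almost all \(t\), hence \(\int_0^T|\xi'|^2\,\d{t}\leq T\,\overline{V}(C)^2\), uniformly over \(\Paths_T^K\). Thus \(\Paths_T^K\) is Sobolev-bounded.

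Next I would verify that \(\Paths_T^K\) is weakly closed, writing it as \(\Paths_T\cap\{\xi:\xi(0)\in K\}\). The first factor is weakly closed by Theorem \ref{thm:weak-closure}. For the second, recall that for finite \(T\) weak convergence in \(\Sobolev([0,T)\to\Reals^d)\) implies uniform convergence (as used in the proof of Theorem \ref{thm:weak-closure}), and in particular pointwise convergence; hence the evaluation \(\xi\mapsto\xi(0)\) is weakly continuous, so the closedness of \(K\) makes \(\{\xi:\xi(0)\in K\}\) weakly closed. The intersection \(\Paths_T^K\) is therefore weakly closed, and being also bounded it is weakly compact by the Hilbert-space fact cited above.

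In truth the corollary is more of a bookkeeping consequence than a new theorem, since the hard analysis is already discharged in Theorems \ref{thm:a-priori-bound} and \ref{thm:weak-closure}. The only point that genuinely requires care — and the one I would flag as the main obstacle — is converting the pathwise Lipschitz statements into a \emph{uniform} \(L^2\) bound on derivatives: this hinges on the confinement of all paths of \(\Paths_T^K\) to the single random ball \(\ball(\origin,C)\) together with the almost-sure finiteness of the maximal speed-limit \(\overline{V}(C)\) there, which is precisely what prevents the candidate Sobolev-bounding constant from being rendered infinite by the improper high-speed tail of \(\Pi\).
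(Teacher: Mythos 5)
Your proposal is correct and follows essentially the same route as the paper, whose proof simply states that the corollary is immediate from Theorems \ref{thm:a-priori-bound} and \ref{thm:weak-closure} together with weak compactness of bounded weakly-closed subsets of the Hilbert space \(\Sobolev([0,T)\to\Reals^d)\). Your elaborations --- the uniform derivative bound via \(\overline{V}(C)\) (which the paper records in the discussion following Theorem \ref{thm:a-priori-bound}) and the weak closedness of \(\{\xi:\xi(0)\in K\}\) via weak-to-uniform convergence --- are exactly the bookkeeping the paper leaves implicit.
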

\begin{proof}
 Immediate from Theorems \ref{thm:a-priori-bound} and \ref{thm:weak-closure} together with weak compactness results for
 the Hilbert space \(\Sobolev([0,T)\to\Reals^d)\).
\end{proof}

\section[\texorpdfstring{$\Pi$}{Π}-paths between point pairs]{\(\Pi\)-paths between arbitrary points}\label{sec:pi-paths and points}
We have shown that the condition \(\gamma\geq d\) ensures that almost surely \(\Pi\)-paths
do not diverge to infinity in finite time (as noted in Remark \ref{rem:equivalence}, this condition is also necessary). 
However it is not yet clear whether \(\Pi\)-paths can move from one point to another in finite time.
In dimension \(d=2\) the question is essentially one of whether one can reach a specified point in finite time by travelling along paths of progressively slower and slower speed.
Dimension \(d\geq3\) appears intransigent at first glance, since one knows that lines of a Poisson line process do not intersect each other in dimension \(3\) and higher.
Nevertheless, almost surely there are \(\Pi\)-paths connecting \emph{all} pairs of points in dimension \(2\) \emph{and} higher, so long as we strengthen the condition on \(\gamma\) to \(\gamma>d\).

We begin by showing how to connect specified pairs of points.
\begin{thm}[\(\Pi\)-paths connect given pairs of points in finite time]\label{thm:connection}
 Suppose \(\gamma>d\). 
Then specified \(x_1\) and \(x_2\) in \(\Reals^d\)
can almost surely be connected in finite time \(T\) by a \(\Pi\)-path \(\xi\).
\end{thm}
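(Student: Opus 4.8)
The plan is to build an explicit multiscale \(\Pi\)-path by exploiting the scale-equivariance of the intensity measure \eqref{eqn:improper}. By translation invariance it suffices to treat \(x_2=\origin\), and the whole problem splits into two symmetric sub-problems: reaching a fixed point from a macroscopic distance, and (by reversing time, which carries \(\Pi\)-paths to \(\Pi\)-paths) leaving a fixed point to meet a macroscopically fast line. The governing heuristic is a scaling computation: a spatial rescaling by \(\lambda\) carries speeds to \(\lambda^{(d-1)/(\gamma-1)}\) times their value (this is the exponent forcing invariance of \eqref{eqn:improper}), so the fastest line meeting \(\ball(\origin,r)\) has speed of order \(r^{(d-1)/(\gamma-1)}\), and traversing the annulus at scale \(r\) costs time of order \(r/r^{(d-1)/(\gamma-1)}=r^{(\gamma-d)/(\gamma-1)}\). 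Since \(\gamma>d\) this exponent is strictly positive, so the dyadic sum \(\sum_n (2^{-n})^{(\gamma-d)/(\gamma-1)}\) converges; this is exactly where the strict inequality \(\gamma>d\) (rather than \(\gamma\geq d\), cf. Theorem \ref{thm:a-priori-bound} and Remark \ref{rem:equivalence}) is essential. Note too that \(\origin\) is almost surely met by no line of \(\Pi\), so it can only be attained as a limit along lines of vanishing speed, with the path at rest at the terminal instant; the definition of \(\Pi\)-path permits being at rest at any point, which is what licenses this.

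The core construction is a descending hierarchy of lines. Writing \(r_n=r_0 2^{-n}\), I would select from \(\Pi\) a sequence of marked lines \((\ell_n,v_n)\) with \(v_n\) of order \(r_n^{(d-1)/(\gamma-1)}\), each \(\ell_n\) passing within distance of order \(r_{n+1}\) of \(\origin\), and consecutive lines \(\ell_n,\ell_{n+1}\) approaching to within distance of order \(r_{n+1}\) of each other. The availability of such lines at each scale is read off from the stationary planar Poisson process \(\widetilde\Pi\) produced by the projection \((\ell,v)\mapsto(|r|^{d-1},v^{-(\gamma-1)})\) used in Theorem \ref{thm:a-priori-bound}: the family of lines meeting a given ball with speed in a prescribed band and with direction in a prescribed range has positive, explicitly computable intensity, so the required configurations occur with probabilities controllable uniformly across scales by self-similarity. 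The path then travels along each \(\ell_n\) at speed \(v_n\), contributing time of order \(r_n^{(\gamma-d)/(\gamma-1)}\) per scale.

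The genuinely delicate point --- and the step I expect to be the main obstacle --- is the transfer between consecutive lines, which in dimension \(d>2\) cannot be a simple incidence, since almost surely no two lines of \(\Pi\) meet. Crossing the residual gap of order \(r_{n+1}\) between \(\ell_n\) and \(\ell_{n+1}\) is itself a miniature instance of the same reaching problem at scale \(r_{n+1}\), handled by a further nested hierarchy; consequently the \(\Pi\)-path is organised not as a sequence of segments but as a tree of sub-intervals, exactly as anticipated in Section \ref{sec:lipschitz-paths}. The total elapsed time is then a sum over the nodes of this tree, and one must verify that it converges almost surely: this is where positivity of the exponent \((\gamma-d)/(\gamma-1)\) does the decisive work, dominating the branching of the tree so that the summed geometric contributions stay finite. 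Establishing that the requisite near-incident lines exist simultaneously at every node (a Borel--Cantelli or second-moment estimate against \(\widetilde\Pi\), uniform in scale) is the main technical burden.

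Finally, to pass from reaching a single point to connecting \(x_1\) and \(x_2\), I would fix one macroscopically fast line \(\ell^\ast\in\Pi\) passing within bounded distance of both points --- such a line exists almost surely, since the set of lines meeting small balls about \(x_1\) and \(x_2\) with speed in a fixed positive band has positive finite intensity --- and then concatenate the ``leaving \(x_1\)'' hierarchy, a finite traverse along \(\ell^\ast\), and the ``reaching \(x_2\)'' hierarchy. Each piece takes finite time, so the resulting \(\Pi\)-path connects \(x_1\) to \(x_2\) in finite total time \(T\). If a softer conclusion is preferred, the bounded family of approximating paths together with the a priori bound of Theorem \ref{thm:a-priori-bound} permits extraction, via the weak compactness of Corollary \ref{cor:compactness}, of a limiting \(\Pi\)-path realising the connection.
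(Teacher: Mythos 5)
Your strategy is essentially the paper's own: a recursive, tree-indexed multiscale construction (fast line found near the targets at each scale, residual gaps filled by nested copies of the same construction), speeds controlled by a Borel--Cantelli estimate against the projected process, and total time summed over the recursion tree. The paper organises this directly as a binary tree whose root is the fastest line hitting balls of radius \(|x_1-x_2|/\alpha\) around the two points, each node spawning two children that connect an endpoint to the closest point on the parent line. However, your convergence argument has a genuine quantitative gap, located exactly at the step you yourself flag as the main burden.

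Write \(\beta=(\gamma-d)/(\gamma-1)\), and note that \(\beta<1\) always, since \(d\geq2\) forces \(\gamma-d<\gamma-1\). With your dyadic scales \(r_n=r_02^{-n}\), every gap of size comparable to \(r_{n+1}\) spawns a full nested hierarchy, whose own gaps spawn further hierarchies, and so on. Counting compositions of \(j\) over the generations shows that the scale \(r_02^{-j}\) occurs in the recursion tree with multiplicity of order \(2^{j}\) (and this undercounts if, as in the paper, each gap-problem spawns two children). The summed traversal time is therefore of order
\begin{equation*}
\sum_j 2^{j}\,\bigl(r_0 2^{-j}\bigr)^{\beta} \quad=\quad r_0^{\beta}\sum_j 2^{(1-\beta)j}\,,
\end{equation*}
which \emph{diverges} because \(\beta<1\). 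So positivity of the exponent \((\gamma-d)/(\gamma-1)\) does not by itself dominate the branching: the convergent dyadic sum \(\sum_n(2^{-n})^{\beta}\) controls only the spine, not the tree. This is precisely why the paper does not use dyadic scales. It takes a per-generation contraction ratio \(\alpha\) satisfying \(\alpha>2^{(\gamma-1)/(\gamma-d)}=2^{1/\beta}\), so that the level-\(n\) contribution \(2^{n}(r_0\alpha^{-n})^{\beta}=(2\alpha^{-\beta})^{n}r_0^{\beta}\) is geometrically summable (the additional \((n\zeta)^{1/(\gamma-1)}\) factor coming from Borel--Cantelli is harmless). Your construction is repaired by exactly this substitution --- replace \(2^{-n}\) by \(\alpha^{-n}\) with \(\alpha\) chosen above that \(\gamma\)-dependent threshold --- but as written, with gaps proportional to the dyadic scale and a hierarchy spawned at every gap, the total time of your path is almost surely infinite.
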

\begin{proof}
 The construction connects segments of lines from \(\Pi\) together in a tree-like fashion, rather than sequentially.
The basic idea is as follows:
for a sufficiently large constant \(\alpha\) (indeed, \(\alpha>2^{(\gamma-1)/(\gamma-d)}\) suffices), 
construct disjoint balls of radius \(|x_1-x_2|/\alpha\)  around \(x_1\) and \(x_2\). 
Choose the fastest line \(\ell\) in \(\Pi\)
hitting both balls, corresponding to the root of a binary tree representation of a path connecting \(x_1\) to \(x_2\). Then create two daughter nodes, repeating the construction
based on (a) \(x_1\) and the closest point to \(x_1\) on \(\ell\), and (b) \(x_2\) and the closest point to \(x_2\) on \(\ell\).
Extend this recursively to generate a binary tree-indexed collection of line segments. 
Figure \ref{fig:binary-tree} illustrates the first two stages of the \(d=2\) case.
\begin{Figure}
  \includegraphics[width=5in]{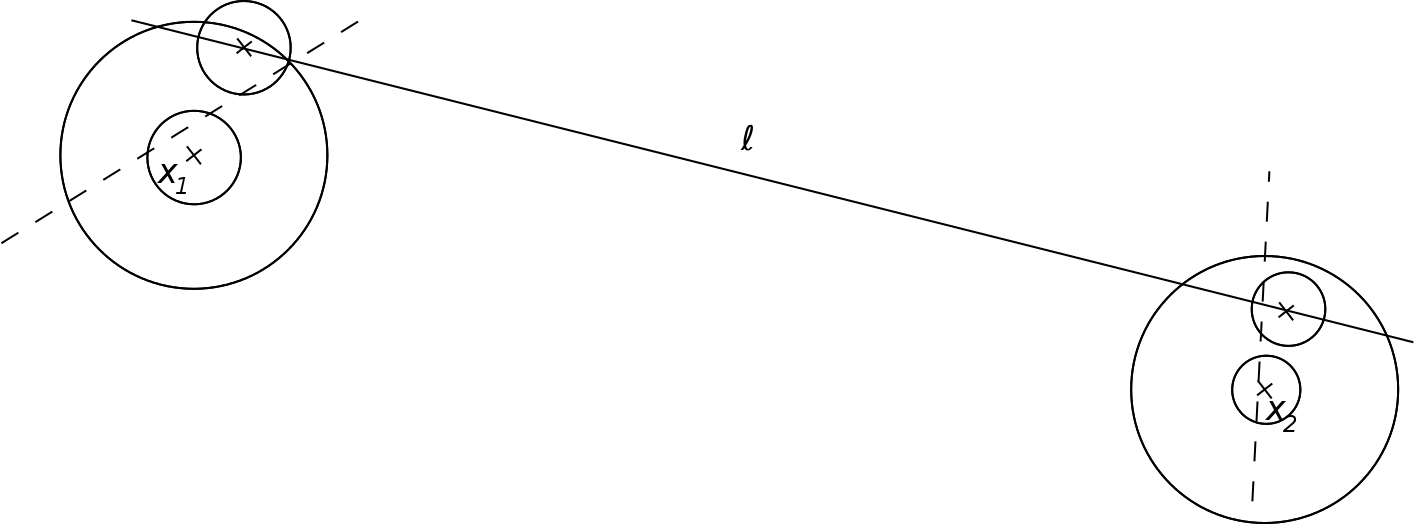}
\centering
\caption{\label{fig:binary-tree}
First two stages of a recursive construction of a \(\Pi\)-path from \(x_1\) to \(x_2\) in two dimensions,
using fastest available lines taken from an improper Poisson line process marked by speeds.
Note that in the case of higher dimensions the lines will almost surely not intersect.
}
\end{Figure}

The path \(\xi\) formed from this binary tree is evidently a \(\Pi\)-path. 
The issue is to show that it makes the connection from \(x_1\) to \(x_2\) in finite time.

Firstly, we need a stochastic lower bound for the speed-limit of the fastest line connecting two balls, namely the speed of the fastest line of \(\Pi\) in
\[
 \hittingset{\ball(x_1,\alpha^{-1}r)}\cap\hittingset{\ball(x_2,\alpha^{-1}r)}\,.
\]
Here we write \(r=|x_1-x_2|\) for the Euclidean distance between \(x_1\) and \(x_2\).
We obtain a stochastic lower bound for the speed distribution in two steps: 
\begin{itemize}
 \item[(a)] Shrink \(\ball(x_1,\alpha^{-1}r)\) to \(\ball(x_1,\alpha^{-1}r/2)\) and then replace \(\ball(x_1,\alpha^{-1}r/2)\)
by the hyper-disk \(D(x_1,\alpha^{-1}r/2)\) obtained by intersecting \(\ball(x_1,\alpha^{-1}r/2)\) with the hyperplane through \(x_1\) which is normal to the vector \(x_2-x_1\);
\item[(b)] Consider the bundle of lines in \(\hittingset{D(x_1,\alpha^{-1}r/2)}\cap\hittingset{\ball(x_2,\alpha^{-1}r)}\) which run through a given point 
\(z\in D(x_1,\alpha^{-1}r/2)\). For each such \(z\), reduce the bundle size by restricting attention to lines which additionally intersect 
\(\ball(z+x_2-x_1,\alpha^{-1}r/2)\subset \ball(x_2,\alpha^{-1}r)\).
\end{itemize}
This geometric construction is illustrated in Figure \ref{fig:reduction}.
\begin{Figure}
  \includegraphics[width=3.5in]{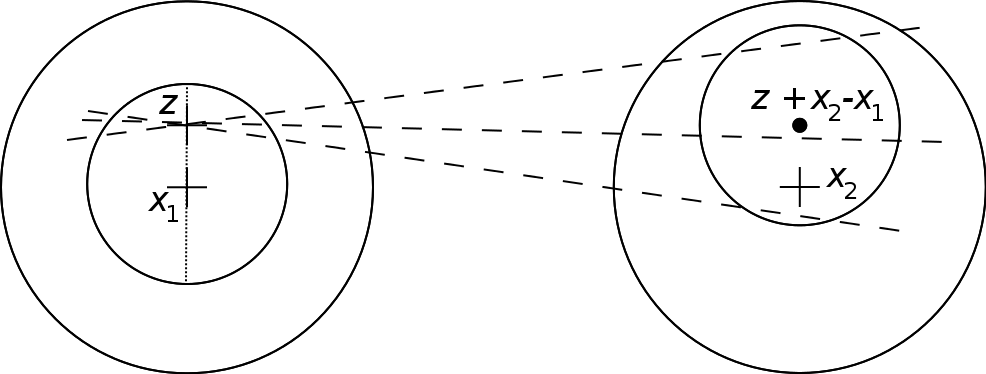}
\centering
\caption{\label{fig:reduction}
Reduction of \(\hittingset{\ball(x_1,\alpha^{-1}r)}\cap\hittingset{\ball(x_2,\alpha^{-1}r)}\) to a smaller hitting set for
which the line measure is more easily computable yet still provides a useful lower bound.
}
\end{Figure}

Using inclusion of hitting sets, this produces an easily computable lower bound for the line measure:
\begin{multline*}
 \mu_d\left(\hittingset{\ball(x_1,\alpha^{-1}r)}\cap\hittingset{\ball(x_2,\alpha^{-1}r)}\right)
\quad\geq\quad 
 \mu_d\left(\hittingset{D(x_1,\alpha^{-1}r/2)}\cap\hittingset{\ball(x_2,\alpha^{-1}r)}\right)\\
\quad\geq\quad
m_{d-1}\left(D(x_1,\alpha^{-1}r/2)\right)\times 
\mu^{(\origin)}_{d-1} \left(\hittingset{\ball(x_2-x_1,\alpha^{-1}r/2)}\right)\,.
\end{multline*}
Here \(\mu^{(\origin)}_{d-1}=\tfrac{\sin\theta}{\kappa_{d-1}}m_{S^{d-1}_+}\) is derived from the disintegration of \(\mu_d\) by Lebesgue measure on the hyperplane through \(x_1\) which is normal to \(x_2-x_1\),
using \eqref{eqn:disintegration2}. Thus \(\mu^{(\origin)}_{d-1}\) is a weighted version of
the invariant (hyper-surface area) measure on the hemisphere of un-sensed lines \(\ell\) passing through the origin \(\origin\), weighted by \(\sin\theta\) where
\(\theta\) is the angle between \(\ell\) and the hyperplane, normalized to have unit total measure.

Recall that \(\kappa_{d-1}\) denotes the \((\d-1)\)-volume of the unit \((d-2)\)-ball. Thus
\[
 m_{d-1}\left(D(\xi_1,\alpha^{-1}r/2)\right) \quad=\quad \kappa_{d-1} \left(\frac{r}{2\alpha}\right)^{d-1}\,,
\]
On the other hand, from \eqref{eqn:disintegration2}
the relevant computation of weighted hyper-surface area for the visibility hemisphere is
\[
\frac{\omega_{d-2}}{\kappa_{d-1}} \int^{\pi_2}_{\pi/2-\theta_0} \sin\theta  \,\cos^{d-2}\theta\,\d{\theta} \quad=\quad \frac{\omega_{d-2}}{(d-1)\kappa_{d-1}} \,\cos^{d-1}\theta_0
\quad=\quad \cos^{d-1}\theta_0\,,
\]
where \(\cos\theta_0=1/(2\alpha)\) (and noting that \(\omega_{d-2}=(d-1)\kappa_{d-1}\)); hence
\[
 \mu^{(\origin)}_{d-1} \left(\hittingset{\ball(x_2-x_1,\alpha^{-1}r/2)}\right)\quad=\quad \left(\frac{1}{2\alpha}\right)^{d-1}\,.
\]
These considerations yield the lower bound
\[
 \mu_d\left(\hittingset{\ball(\xi_1,\alpha^{-1}r)}\cap\hittingset{\ball(\xi_2,\alpha^{-1}r)}\right)
\quad\geq\quad 
\kappa_{d-1}\left(\frac{r}{4\alpha^2}\right)^{d-1}\,.
\]

This reasoning can be applied to the recursive construction indicated above. Let \(r_h\) be the distance between points at node \(h\) on the binary tree representing the path, then
(omitting some implicit conditioning on \(r_h\))
\[
 \Prob{\text{ fastest line speed at } h \leq v_h}
\quad\leq\quad\exp\left(
-\frac{\kappa_{d-1}}{(4\alpha^2)^{d-1}}\,\frac{r_h^{d-1}}{v_h^{\gamma-1}}
\right)\,.
\]
By construction, if node \(h\) is at level \(n\) of the tree then \(r_n\leq \alpha^{-n}r_0\), where \(r_0\) is the Euclidean distance between the original
start and finish points. Fixing \(\varepsilon>0\), we set
\[
 v_h \quad=\quad \frac{r_h^{(d-1)/(\gamma-1)}}{(n \zeta)^{\frac{1}{\gamma-1}}} \qquad \text{where }\zeta=\frac{(4\alpha^2)^{d-1}}{\kappa_{d-1}}(1+\varepsilon)\log 2
\]
Then \(r_h^{d-1}/v_h^{\gamma-1}=n \zeta\). Use the first Borel-Cantelli lemma, and the convergence of
\begin{multline*}
 \sum_h \exp\left(
-\frac{\kappa_{d-1}}{(d-1)(4\alpha^2)^{d-1}}\,\frac{r_h^{d-1}}{v_h^{\gamma-1}}
\right)
\;=\; \sum_n 2^n \exp\left(-(1+\varepsilon) n \log 2\right)
\;=\; \sum_n 2^{-\varepsilon n}\,<\, \infty\,,
\end{multline*}
to deduce that it is
almost surely the case that the speed limits of all but finitely many segments \(h\) in the binary tree representation
will exceed
\[
 v_h \quad=\quad \frac{r_h^{(d-1)/(\gamma-1)}}{(n \zeta)^{\frac{1}{\gamma-1}}}\,.
\]
By the triangle inequality the relevant path length for node \(h\) is no greater than \((1+\tfrac{2}{\alpha})r_h\). 
So the total time spent traversing
the path is finite when
\[
 \sum_h \left(1+\frac{2}{\alpha}\right) \frac{r_h}{v_h}
\quad=\quad
\sum_n 2^n \times \left(1+\frac{2}{\alpha}\right) (n \zeta)^{\frac{1}{\gamma-1}} \left(\frac{r_0}{\alpha^n}\right)^{(\gamma-d)/(\gamma-1)}
\quad<\quad\infty\,.
\]
But this sum converges when \(\alpha> 2^{(\gamma-1)/(\gamma-d)}\):
thus in this case the construction gives a finite-time \(\Pi\)-path 
between \(x_1\) and \(x_2\).
\end{proof}
\begin{rem}\label{rem:connection-by-geodesics}
It can be shown that \(\gamma>d\) is also a necessary condition for connection of \(x_1\) to \(x_2\) by a \(\Pi\)-path in \(\Reals^d\). 
For if \(\gamma=d\) then all \(\Pi\)-paths leaving the origin are subject to an upper bound using the comparison process of Theorem \ref{thm:a-priori-bound},
and a direct calculation shows that this comparison process takes infinite expected time to leave the origin.
\end{rem}

Note that, for \(d>2\), the finite-time path has a curious fractal-like property: whenever the \(\Pi\)-path changes from one line of positive speed-limit to another, 
then it must shift gears right down to zero speed then right up again to the new speed. (And the same applies to each change of speed-limit whilst shifting gears down, and so on \emph{ad infinitum}, as in the case of the fleas and poets of \citealp[On Poetry: a Rhapsody]{Swift-1733}.)

Since there are only countably many lines in \(\Pi\), a simple modification of the above construction shows that almost surely all lines in \(\Pi\) are connected.

\begin{cor}\label{cor:connect-all-lines}
If \(\gamma>d\) then almost surely all lines of \(\Pi\) are joined by finite-time \(\Pi\)-paths.
\end{cor}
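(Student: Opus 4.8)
The plan is to reduce the statement to the assertion that, almost surely, \emph{every} line of \(\Pi\) is joined to the origin \(\origin\) by a finite-time \(\Pi\)-path. This reduction suffices because the time-reversal of a \(\Pi\)-path is again a \(\Pi\)-path (the defining speed-limit condition of Definition \ref{def:pi-path} depends only on position and on the magnitude \(|\xi'|\) of the velocity), and because the concatenation of two finite-time \(\Pi\)-paths meeting at \(\origin\) is again a finite-time \(\Pi\)-path; so a finite-time connection from \(\origin\) to a point of \(\ell_1\) and one from \(\origin\) to a point of \(\ell_2\) can be spliced into a finite-time connection between \(\ell_1\) and \(\ell_2\).

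To establish the reduced statement I would let \(M\) count those lines \(\ell\) (with \((\ell,v)\in\Pi\)) that cannot be joined to \(\origin\) by any finite-time \(\Pi\)-path, and aim to show \(\Expect{M}=0\), whence \(M=0\) almost surely. The hard part is that Theorem \ref{thm:connection} guarantees connection only between \emph{deterministic} points, whereas the lines of \(\Pi\) are random and the connecting construction must be built from the same realization of \(\Pi\); one cannot simply take a countable union over a fixed index set. The device that overcomes this is the Mecke (Slivnyak--Palm) formula for Poisson processes: writing \(\Lambda\) for the intensity measure \eqref{eqn:improper} on \(\Lines^d\times(0,\infty)\), it yields
\[
 \Expect{M}\quad=\quad\int \Prob{\ell \text{ cannot be joined to } \origin \text{ by a finite-time } \Pi\text{-path}}\,\Lambda(\d(\ell,v))\,,
\]
in which the probability is computed for the reduced Palm process, namely a copy of \(\Pi\) that is \emph{independent} of the now-deterministic marked line \((\ell,v)\).

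Given such a deterministic \(\ell\), I would fix a point \(y\in\ell\) (for definiteness, the foot of the perpendicular from \(\origin\)) and apply Theorem \ref{thm:connection}, using \(\gamma>d\), to the independent background copy of \(\Pi\) so as to connect the fixed pair \(\origin,\,y\) in finite time. This forces the integrand to vanish for every \(\ell\), so \(\Expect{M}=0\); and this conclusion is unaffected by the fact that \(\Lambda\) carries infinite total mass, since the integrand is identically zero. Hence \(M=0\) almost surely, every line is joined to \(\origin\), and the reversal-and-concatenation step of the first paragraph completes the argument. The one residual technicality requiring attention is the measurability, jointly in \((\ell,v)\) and in the configuration of \(\Pi\), of the event that \(\ell\) is joined to \(\origin\); this is routine within the Sobolev-space framework of Section \ref{sec:lipschitz-paths} and is the only place where care beyond the cited results is needed.
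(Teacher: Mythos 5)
Your proof is correct, but it takes a genuinely different route from the paper's. The paper in fact supplies no formal proof of this corollary at all: immediately before its statement it remarks only that ``since there are only countably many lines in \(\Pi\), a simple modification of the above construction shows that almost surely all lines in \(\Pi\) are connected'' --- that is, the intended argument re-runs the binary-tree construction of Theorem \ref{thm:connection} with lines of \(\Pi\), rather than deterministic points, as the objects to be joined, with countability of the lines standing in for a union bound. What that sketch leaves unaddressed is exactly the difficulty you isolate: the lines are random, so one cannot take a countable union over a fixed index set, and conditioning on a line's presence could in principle alter the law of the remaining process. Your appeal to the Mecke (Slivnyak--Palm) formula is precisely the device that makes this rigorous: it converts the assertion about the random lines of \(\Pi\) into an integral, against the intensity measure \eqref{eqn:improper}, over \emph{deterministic} marked lines, with the connectivity event assessed in an independent background copy of \(\Pi\), to which Theorem \ref{thm:connection} applies verbatim; the only further ingredient is the monotonicity observation (which you use implicitly and should state) that a \(\Pi\)-path remains a path for the augmented process \(\Pi\cup\{(\ell,v)\}\), since adding a line only enlarges each silhouette \(\Silhouette_v\). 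This is very much in the spirit of the paper, which invokes Slivnyak's theorem in the proof of Theorem \ref{thm:uniqueness}; your approach buys rigour and economy, using Theorem \ref{thm:connection} as a black box rather than re-proving it with random endpoints, while the paper's sketch (if carried out) would avoid Palm calculus at the cost of reworking the tree construction and its conditioning. On your residual measurability point: you can sidestep it entirely by dominating the indicator of non-connection by the indicator that the \emph{explicit} tree construction from the proof of Theorem \ref{thm:connection}, run from \(\origin\) towards the foot of the perpendicular \(y\in\ell\), fails. That failure event is defined by countably many explicit conditions on speeds of fastest lines hitting pairs of balls, so it is jointly measurable in \(((\ell,v),\Pi)\) and has zero probability under the Palm measure; applying Mecke's formula to this dominating function yields your conclusion without ever needing measurability of the connectivity event itself.
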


If two points \(x_1\) and \(x_2\) are joined by \(\Pi\)-paths taking finite time, then it is reasonable to ask whether there is a minimum-time \(\Pi\)-path. As a consequence of 
Corollary \ref{cor:compactness}, we know that this occurs, since \(\gamma>d\) and hence \emph{a fortiori} the compactness condition \(\gamma\geq d\) holds. 
We summarize this conclusion by means of a definition and a further corollary.
We note in passing that \(\Pi\)-geodesics inherit all the properties of minimal geodesics in metric spaces; for example a minimal \(\Pi\)-geodesic cannot intersect itself.

\begin{defn}[\(\Pi\)-geodesic]\label{def:geodesic}
 The \(\Pi\)-path \(\xi:[0,T]\to\Reals^d\) is said to be a \emph{\(\Pi\)-geodesic} (or
\emph{minimum-time geodesic}) from \(\xi(0)=x_1\) to \(\xi(T)=x_2\) if there are no \(\Pi\)-paths connecting \(x_1\)
and \(x_2\) in \(\Paths_S\) for \(S<T\).
\end{defn}

\begin{cor}[Existence of \(\Pi\)-geodesics]\label{cor:geodesics}
Suppose \(\gamma>d\). Consider the set of paths \(\xi\) in \(\Paths\) which begin at fixed location \(\xi(0)=x_1\) and end at fixed location \(\xi(T)=x_2\)
(here \(T\) depends on \(\xi\)).
Almost surely there exist \(\Pi\)-geodesics in \(\Paths\) from \(x_1\) to \(x_2\).
\end{cor}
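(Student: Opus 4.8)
The plan is to combine the connectivity result (Theorem \ref{thm:connection}) with the weak compactness result (Corollary \ref{cor:compactness}) via a standard direct-method-of-the-calculus-of-variations argument. Since \(\gamma>d\), Theorem \ref{thm:connection} guarantees that almost surely there exists at least one \(\Pi\)-path \(\eta\in\Paths_{T_0}\) from \(x_1\) to \(x_2\), for some finite \(T_0\). Define the minimum connection time
\[
 T_* \quad=\quad \inf\left\{S>0 \;:\; \text{there exists a \(\Pi\)-path in }\Paths_S\text{ from }x_1\text{ to }x_2\right\}\,,
\]
which is finite since \(T_*\leq T_0\). The goal is to show this infimum is attained, that is, that there is an actual \(\Pi\)-path realizing passage time exactly \(T_*\).

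First I would pick a minimizing sequence of \(\Pi\)-paths \(\xi_n\in\Paths_{S_n}\) from \(x_1\) to \(x_2\) with \(S_n\downarrow T_*\). By reparametrizing (extending each \(\xi_n\) to remain at rest at \(x_2\) after its arrival, which keeps it a \(\Pi\)-path since zero velocity trivially obeys the speed-limit condition), I can regard all the \(\xi_n\) as elements of a common space \(\Paths_T\) for a fixed finite \(T>T_0\), all beginning in the compact set \(K=\{x_1\}\). By Corollary \ref{cor:compactness}, this family is weakly precompact in \(\Sobolev([0,T)\to\Reals^d)\), so after passing to a subsequence we may assume \(\xi_n\to\xi\) weakly, with \(\xi\in\Paths_T\) by weak closure (Theorem \ref{thm:weak-closure}). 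Weak convergence in \(\Sobolev\) over a finite interval yields uniform convergence (as used in the proof of Theorem \ref{thm:weak-closure}), so the limit \(\xi\) still satisfies \(\xi(0)=x_1\) and passes through \(x_2\).

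The step I expect to be the main obstacle is the lower semicontinuity of passage time under weak convergence, together with the bookkeeping needed to pin down that the limit reaches \(x_2\) no later than time \(T_*\). The subtlety is that passage time is not simply the terminal parameter \(T\) but the first time the path arrives at \(x_2\); I would define \(\tau_n\) to be the arrival time of \(\xi_n\) at \(x_2\), so \(\tau_n\le S_n\to T_*\). Uniform convergence gives \(\xi(\tau)=x_2\) for \(\tau=\liminf\tau_n\le T_*\) (after a further subsequence making \(\tau_n\) convergent), so \(\xi\) restricted to \([0,\tau]\) is a \(\Pi\)-path from \(x_1\) to \(x_2\) in time \(\tau\le T_*\). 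By the definition of \(T_*\) as an infimum we must have \(\tau\ge T_*\), hence \(\tau=T_*\) and the restriction of \(\xi\) to \([0,T_*]\) is the desired \(\Pi\)-geodesic.

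I would finish by verifying the minimality condition of Definition \ref{def:geodesic}: there is no \(\Pi\)-path connecting \(x_1\) and \(x_2\) in \(\Paths_S\) for any \(S<T_*\), which is immediate from the definition of \(T_*\) as the infimum of connection times. Thus \(\xi|_{[0,T_*]}\) is a \(\Pi\)-geodesic. The whole argument is carried out for a fixed realization of \(\Pi\) on the almost-sure event (from Theorem \ref{thm:connection}) that \(x_1\) and \(x_2\) are connectable in finite time, so the existence of \(\Pi\)-geodesics holds almost surely, as claimed.
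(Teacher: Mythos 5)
Your proposal is correct and follows essentially the same route as the paper: a minimizing sequence of connecting \(\Pi\)-paths, each held at rest at \(x_2\) after arrival so all live in a common \(\Paths_T\), then weak compactness (Corollary \ref{cor:compactness}) and weak closure to extract a limit realizing the infimum of connection times. Your extra bookkeeping with the arrival times \(\tau_n\) is a slightly more explicit version of the paper's observation that the limit satisfies \(\xi(t)=x_2\) for \(t\in[T_\infty,T)\), but the argument is the same.
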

\begin{proof}
 By Theorem \ref{thm:connection}, almost surely there are connecting \(\Pi\)-paths in \(\Paths_T\) for large enough \(T<\infty\). Consider a sequence of such paths
\(\xi_1\), \(\xi_2\), \ldots,
starting at \(x_1\) and ending at \(x_2\), such that \(\xi_n(t)=x_2\) for all \(t\in[T_n,T)\), and such that \(T_n\) tends to \(T_\infty\) the infimum of all connection times for \(\Pi\)-paths
between \(x_1\) and \(x_2\). By Corollary \ref{cor:compactness} we may extract a weakly convergent subsequence, and the limit \(\xi\in\Paths_T\) 
will satisfy \(\xi(t)=x_2\) for all \(t\in[T_\infty,T)\) and hence realize the infimum. The
resulting \(\Pi\)-path \(\xi\) will be a \(\Pi\)-geodesic between \(x_1\) and \(x_2\).
\end{proof}

In the next section we will examine the extent to which \(\Pi\)-geodesics are uniquely determined by their end-points. 
Before turning to this matter,
we improve on Theorem \ref{thm:connection} by showing that if \(\gamma>d\) then almost surely \emph{all} pairs of points in \(\Reals^d\)
are connected by finite-time \(\Pi\)-paths. 
That is to say, almost surely 
there are no infinite singularities in the metric space induced by the time taken by fastest \(\Pi\)-path transit.
The proof closely follows that of Theorem \ref{thm:connection}, but splits paths apart in a hierarchical way so as to access entire regions rather than single points.

\begin{thm}\label{thm:metric-space}
Suppose \(\gamma>d\) and \(d\geq2\). With probability \(1\), the network formed by \(\Pi\) connects up all pairs of points in \(\Reals^d\) using
finite-time \(\Pi\)-paths.
 \end{thm}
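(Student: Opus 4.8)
The plan is to upgrade Theorem \ref{thm:connection} from a fixed pair (where the exceptional null set is allowed to depend on the pair) to a statement holding simultaneously for all pairs. Since $\Pi$-paths may be reversed and concatenated at a common endpoint without leaving $\Paths$, it suffices to prove that, for each fixed $R\in\mathbb{N}$, almost surely every $x\in\ball(\origin,R)$ can be joined to the fixed anchor $\origin$ by a finite-time $\Pi$-path; intersecting these countably many almost-sure events and using $\Reals^d=\bigcup_R\ball(\origin,R)$ then connects every pair (route $x\to\origin\to y$). The essential new idea, as flagged before the statement, is to organise the construction so that a single branching family of connections reaches an entire region at once, rather than steering towards one isolated target.

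Concretely, I would build a tree of balls covering $\ball(\origin,R)$: at level $n$ the balls have radius $\rho_n=R\,2^{-n}$, each level-$(n-1)$ ball is covered by at most $N_d$ child balls of half its radius lying inside it ($N_d$ a dimensional covering constant), so there are at most $N_d^{\,n}$ balls at level $n$ and any $x$ lies in a nested descending chain $B^{(0)}\supseteq B^{(1)}\supseteq\cdots$ with $\diam B^{(n)}\to0$. Along $x$'s chain I reach a point in each $B^{(n)}$ by connecting it to the point already reached in the parent $B^{(n-1)}$, using the fastest line of $\Pi$ meeting both together with the recursive splicing of Theorem \ref{thm:connection} at the small scale $\rho_n$. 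Because the child sits inside the parent at mutual distance $\lesssim\rho_n$, the line-measure estimate developed in the proof of Theorem \ref{thm:connection} bounds the $\mu_d$-measure of the connecting lines below by $c\,\rho_n^{d-1}$, whence
\[
 \Prob{\text{fastest connecting speed at a level-}n\text{ ball}<v_n}\quad\leq\quad\exp\!\left(-\,\frac{c\,\rho_n^{d-1}}{v_n^{\gamma-1}}\right)\,.
\]

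The heart of the argument is a single Borel--Cantelli bookkeeping that must control two competing sums with one choice of thresholds. Summing failure probabilities over all balls requires $\sum_n N_d^{\,n}\exp(-c\,\rho_n^{d-1}/v_n^{\gamma-1})<\infty$, which forces $v_n^{\gamma-1}\asymp \rho_n^{d-1}/n$; with this choice the time of a level-$n$ hop satisfies
\[
 \frac{\rho_n}{v_n}\quad\asymp\quad 2^{\,n(d-\gamma)/(\gamma-1)}\,n^{1/(\gamma-1)}\,,
\]
so the branch-traversal time $\sum_n\rho_n/v_n$ converges \emph{precisely because} $\gamma>d$ (at $\gamma=d$ the exponent vanishes and the length diverges, consistent with Remark \ref{rem:connection-by-geodesics}). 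By the first Borel--Cantelli lemma, almost surely only finitely many child--parent connections are slower than $v_n$; beyond the corresponding finite level $N$ every hop meets its bound, giving a tail time $\leq\sum_{n\geq N}\rho_n/v_n$ that is uniform in $x$.

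The main obstacle is exactly making these two convergence requirements compatible with one sequence $v_n$ while keeping the bound uniform over $x$, and this is where $\gamma>d$ is indispensable. Two subsidiary points need care. First, the finitely many ``slow'' connections, all at levels below $N$, must each take finite time: for any fixed pair of balls the fastest connecting speed has distribution function $\exp(-\mathrm{const}\cdot v^{-(\gamma-1)})$, hence is almost surely strictly positive and finite, and only finitely many such connections occur, so their minimum speed is positive and their contribution is uniformly finite. Second, one must check that the spliced hops form a genuine locally Lipschitz $\Pi$-path converging to $x$: successive hops are joined at the points reached inside the shrinking balls $B^{(n)}$, each finite truncation is a $\Pi$-path, and the defining condition of Definition \ref{def:pi-path} is local in time, so the full concatenation is a $\Pi$-path reaching $x$ at the finite terminal time $\sum_n\rho_n/v_n$.
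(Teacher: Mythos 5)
Your overall strategy --- a branching family of connections reaching a whole region at once, level-dependent speed thresholds, one Borel--Cantelli sum, and a traversal-time sum whose convergence is exactly \(\gamma>d\) --- is the same as the paper's, and your reduction to a single anchor point and your direct-concatenation argument for the limiting path are both fine. But there is a genuine gap in the bookkeeping. Your two displayed sums control only the \emph{main} connecting lines (one fastest line per parent--child pair of balls), whereas the actual path must also get \emph{onto} and \emph{off} each such line. You acknowledge this step (``recursive splicing of Theorem \ref{thm:connection}''), but a splice is not a single further line: it is an entire infinite tree of additional segments --- unavoidably so for \(d>2\), where lines of \(\Pi\) almost surely do not intersect, so every transfer between consecutive lines requires an infinite cascade through slower lines. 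Each segment of each such cascade carries its own failure event, and there are infinitely many of them \emph{per hop}, so they cannot be absorbed into your ``finitely many slow connections'' remark; they must enter the Borel--Cantelli sum with their own thresholds, and their traversal times must enter the time sum. As written, \(\sum_n \rho_n/v_n\) is therefore not an upper bound on the branch-traversal time. Nor can you invoke Theorem \ref{thm:connection} as a black box for each splice: that theorem concerns a \emph{deterministic} pair of points, with an exceptional null set depending on the pair, while your splice endpoints (the point \(y_{n-1}\) already reached, and the foot point on the next line) are random, being measurable functions of \(\Pi\) itself.

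The omission is not innocuous, because repairing it changes the arithmetic. The paper's proof makes the full tree explicit, with two kinds of segments --- region-splitting connectors (your parent--child lines, \(k_n\) of them at level \(n\)) and binary access segments used for splicing (\(h_n\) of them), satisfying \(h_n=2h_{n-1}+k_{n-1}\), \(k_n=\alpha^d k_{n-1}\), hence \(h_n+k_n\lesssim \alpha^{nd}\) --- and runs a single global Borel--Cantelli over all of them. The price is that a single branch then traverses of order \(2^n\) access segments at level \(n\), so the per-branch time sum is \(\sum_n 2^n (n\zeta)^{1/(\gamma-1)}\bigl((1+\tfrac{\sqrt d}{2})/\alpha\bigr)^{n(\gamma-d)/(\gamma-1)}\), which is why the contraction ratio must satisfy \(\alpha>(1+\tfrac{\sqrt d}{2})\,2^{(\gamma-1)/(\gamma-d)}\) rather than being a mere covering constant. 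With your dyadic choice \(\rho_n=R\,2^{-n}\), inserting the mandatory \(2^n\) factor gives \(\sum_n 2^n\, n^{1/(\gamma-1)}\,2^{-n(\gamma-d)/(\gamma-1)} \asymp \sum_n n^{1/(\gamma-1)}\,2^{n(d-1)/(\gamma-1)}=\infty\): your parameters actually fail once the splicing is accounted for. The fix is to shrink scales per level by a factor exceeding \(2^{(\gamma-1)/(\gamma-d)}\) (as the paper does), after which your anchored, ball-covering variant of the argument goes through.
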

\begin{proof}
It suffices to establish that, almost surely, finite-time \(\Pi\)-paths can be used to connect a specified point to all the points of a single hypercube of positive area.

Consider then the construction of a path \(\xi\) from \(x_1\) to a hypercube centred on \(x_2\), where \(|x_1-x_2|=r_0\) and the hypercube is of
side-length \(\alpha^{-1}r_0\) for some sufficiently large integer \(\alpha\) (indeed, \(\alpha>(1+\tfrac{\sqrt{d}}{2})2^{(\gamma-1)/(\gamma-d)}\) suffices). 
The construction commences as in Theorem \ref{thm:connection},
choosing the fastest line \(\ell\) of \(\Pi\) in
\(\hittingset{\ball(x_1,\alpha^{-1}r_0)}\cap\hittingset{\ball(x_2,\alpha^{-1}r_0)}\), and this corresponds to the root of a tree now representing a whole family of paths.
Repeat the construction, adding a further line from \(\Pi\) which nearly connects \(x_1\) to the point on \(\ell\) closest to \(x_1\), as in Theorem \ref{thm:connection}.
However on the other side we generate \(\alpha^d\) separate paths, using the fastest possible line to connect 
the ball of radius \(\alpha^{-1}r_0\) centred on the point on \(\ell\) closest to \(x_2\), to each of a total of \(\alpha^d\)
balls of radius \(\alpha^{-1}r_0\) centred on
centroids of cells arising from
a dissection of the original hypercube of side-length \(\alpha^{-1}r_0\) into \(\alpha^d\) sub-hypercubes each of side-length \(\alpha^{-2}r_0\).
This 
is illustrated in Figure \ref{fig:multiple}.
\begin{Figure}
  \includegraphics[width=3.5in]{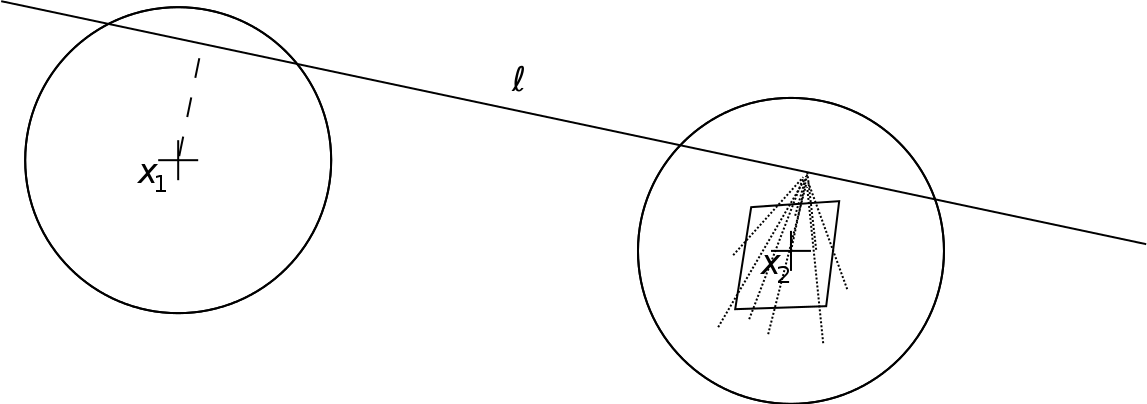}
\centering
\caption{\label{fig:multiple}
Initial stage of connecting \(x_1\) to points in a specified hypercube (case of \(d=2\)).
}
\end{Figure}

In the first case the new distance is at most \(\alpha^{-1} r_0\). In the second case we may use Pythagoras to show that the new distance is at most \((1+\half\sqrt{d})\alpha^{-1}r_0\).
This construction step generates \(1+\alpha^d\) new segments at the second level of the tree, the first one being like the segments generated in Theorem \ref{thm:connection},
while the remaining \(\alpha^d\) nearly connect a given point to \(\alpha^d\) centroids of sub-hypercubes.

Repeating the construction down to level \(n\), we generate \(h_n\) segments of the first kind and \(k_n\) segments of the second kind, where
\begin{align}\label{eqn:binary-recursion}
 h_n \quad&=\quad 2 h_{n-1} + k_{n-1}\,, & \qquad & h_1 \quad=\quad 1\,,\\
 k_n \quad&=\quad \alpha^d k_{n-1}\,,    & \qquad & k_1 \quad=\quad \alpha^d\,.\nonumber
\end{align}
The total number of segments which have been built at level \(n\) is therefore
\[
 h_n+k_n \quad=\quad  2 (h_{n-1} + k_{n-1}) + (\alpha^d - 1)k_{n-1}\,.
\]
The bound on \(\alpha\) imposed at the start of the proof, together with \(\gamma>d\geq2\), shows that \(\alpha^d>8\), 
and so we can use the recursion \eqref{eqn:binary-recursion} and the fact that \(h_1+k_1=1+\alpha^d\) to deduce
\begin{multline}
  h_n+k_n \quad\leq\quad 2 (h_{n-1} + k_{n-1}) + (\alpha^d - 1)k_{n-1}\\
\quad=\quad 2^2 (h_{n-2} + k_{n-2}) +  (\alpha^d - 1) (k_{n-1} + 2 k_{n-2})\\
\quad=\ldots=\quad 2^{n-1} (h_{1} + k_{1}) +  (\alpha^d - 1) (k_{n-1} + 2 k_{n-2} + \ldots + 2^{n-2} k_{1})
\quad\leq\quad
\text{constant}\times \alpha^{nd}\,.
\end{multline}

Now consider the speed-limit of the line forming node \(h\) at level \(n\). Suppose the relevant distance between target points is \(r_h\). Then (as in Theorem \ref{thm:connection})
\[
 \Prob{\text{ fastest line speed-limit at } h \leq v_h}
\quad\leq\quad\exp\left(
-\frac{\kappa_{d-1}}{(4\alpha^2)^{d-1}}\,\frac{r_h^{d-1}}{v_h^{\gamma-1}}
\right)\
\]
We know
\[
 r_h \quad\leq\quad \left(\frac{1+\sqrt{d}/2}{\alpha}\right)^n r_0\,.
\]
Choose 
\[
 v_h \quad=\quad \frac{r_h^{(d-1)/(\gamma-1)}}{(n \zeta)^{\frac{1}{\gamma-1}}} \qquad \text{as before, but with }\zeta=\frac{(4\alpha^2)^{d-1}}{\kappa_{d-1}}(1+\varepsilon)d\log \alpha
\]
Using the first Borel-Cantelli lemma once again, all but finitely many of the segments in this construction have speed-limit exceeding the respective \(v_h\), since
\[
 \sum_n \alpha^{dn} \exp\left(-(1+\varepsilon)nd\log \alpha)\right) \quad=\quad \sum_n \alpha^{-\varepsilon n d}\quad<\quad\infty\,.
\]
Thus each one of the paths can be traversed in finite time if the following sum converges:
\begin{multline*}
 \sum_{h \text{ in specified path}} \frac{r_h}{v_h} \quad\leq\quad \sum_n 2^n (n \zeta )^{\frac{1}{\gamma-1}} \left(\frac{1+\sqrt{d}/2}{\alpha}\right)^{n(\gamma-d)/(\gamma-1)}\\
\quad\leq\quad \zeta^{\frac{1}{\gamma-1}}\sum_n n^{\frac{1}{\gamma-1}} \left(2  \left(\frac{1+\sqrt{d}/2}{\alpha}\right)^{(\gamma-d)/(\gamma-1)}\right)^n\,.
\end{multline*}
Recalling the stipulation that \(\gamma>d\), this converges if we choose
\[
 \alpha \quad>\quad \left(1+\frac{\sqrt{d}}{2}\right) 2^{(\gamma-1)(\gamma-d)}\,.
\]
The family of \(\Pi\)-paths used here is weakly compact (Corollary \ref{cor:compactness}).
It follows therefore that this construction almost surely delivers \(\Pi\)-paths which within finite time connect a specified point \(x_1\) to all points in a non-empty hypercube with centroid \(x_2\) and side-length \(\alpha^{-1}|x_2-x_1|\).
Using this fact together with judicious
concatenation of \(\Pi\)-paths, it follows
that almost surely all pairs of points in \(\Reals^d\) are connected by \(\Pi\)-geodesics.
\end{proof}

In the case \(d=2\), both Theorems \ref{thm:connection} and \ref{thm:metric-space} can be proved more directly, 
exploiting the fact that non-parallel lines in \(\Reals^2\) always meet. 
The resulting \(\Pi\)-paths are then formed from consecutive sequences of line segments drawn from \(\Pi\).
However our interest is in \emph{\(\Pi\)-geodesics}, and even in case \(d=2\) it is not yet known whether \(\Pi\)-geodesics can be constructed as 
consecutive sequences of line segments.

While we define \(\Pi\)-geodesics as minimum-\emph{time} paths, we retain an interest in the actual lengths of \(\Pi\)-geodesics.
It is a consequence of Theorem \ref{thm:a-priori-bound} that a \(\Pi\)-geodesic between two points is almost surely of finite length:
this follows because if the \(\Pi\)-geodesic has finite duration \(T\)
then it must be contained in a sufficiently large ball, and therefore its maximum speed is bounded, which in turn bounds the length.
There is a more subtle question, namely whether the \emph{mean} length of the \(\Pi\)-geodesic is finite. 
We shall answer this question in the affirmative in Section \ref{sec:pi-geodesics-finite-mean-length}, but only for the case of dimension \(d=2\).
 
The above results establish the existence of \(\Pi\)-geodesics, but only non-constructively. The
principal difficulty in taking a
constructive approach lies in the implicit tree-like way in which \(\Pi\)-paths are constructed
in Theorems \ref{thm:connection} and \ref{thm:metric-space}. In the remainder of this section we
show how to approximate \(\Pi\)-paths by sequentially-defined Lipschitz paths which are almost \(\Pi\)-paths.
The major benefit of this result is that it implies the measurability of the random time which a \(\Pi\)-geodesic would take to move from one specified point \(x\) to another specified point \(y\). 
As is commonly the case for measurability arguments, the details are a little tedious; however the result does provide theoretical justification
for some simulation constructions of \(\Pi\)-geodesics (for example, the construction in Figure \ref{fig:dumbbell}).
The essence of the matter is to work
with Lipschitz paths which would be \(\Pi\)-paths if the upper-semicontinuous speed-limit \(V\) were
replaced by \(\max\{\eps,V\}\) for some small \(\eps>0\).
The methods of proof of the following results also justify the simulation algorithm used to produce the realizations of networks in Figure \ref{fig:dumbbell}.

\begin{defn}[\(\eps\)-near-sequential-\(\Pi\)-path]
\label{def:near-sequential-pi-path}
For given \(\eps>0\), a continuous path \(\widetilde{\xi}:[0,T]\to\Reals^d\) is an
\emph{\(\eps\)-near-sequential-\(\Pi\)-path} if there is a finite dissection of the interval \([0,T]\) as
\[
 0=b_0\leq a_1\leq b_1\leq a_2\leq b_2\leq \ldots\leq a_m\leq b_m\leq a_{m+1}=T\,,
\]
associated with a finite sequence of marked lines from \(\Pi\) (possibly with repeats),
\[
 (\widetilde{\ell}_1,\widetilde{v}_1),  (\widetilde{\ell}_2,\widetilde{v}_2), \ldots,  (\widetilde{\ell}_m,\widetilde{v}_m)\,,
\]
such that
\begin{itemize}
 \item[(a)] \(\widetilde{\xi}(t)\in\widetilde{\ell}_r\) when \(a_r\leq t\leq b_r\), and
\(|\widetilde{\xi}'(t)|\leq \widetilde{v}_r\) for almost all \(t\in(a_r,b_r)\);
\item[(b)] \(|\widetilde{\xi}'(t)|< \eps\) for almost all \(t\in\bigcup_{r=0}^m[b_r,a_{r+1}]\)
and \(\widetilde{\xi}'\) is constant on each \([b_r,a_{r+1}]\);
\item[(c)] \(\sum_{r=0}^m |a_{r+1}-b_r|< \eps\).
\end{itemize}
\end{defn}
 
The next result shows that \(\Pi\)-paths can be approximated by \(\eps\)-near-sequential-\(\Pi\)-paths for small \(\eps>0\), simply by
using the principal marked lines involved in \(\xi\) to generate the finite marked line sequence
\((\widetilde{\ell}_1,\widetilde{v}_1)\),  \((\widetilde{\ell}_2,\widetilde{v}_2)\), \ldots,  \((\widetilde{\ell}_m,\widetilde{v}_m)\).

\begin{thm}\label{thm:pi-path-approximation}
 Suppose only that \(\gamma> 1\), so that the line processes \(\Pi_v\) are locally finite for each \(v>0\).
Consider a \(\Pi\)-path \(\xi:[0,T]\to\Reals^d\), defined up to some fixed finite time \(T\) and running from \(x_1\) to \(x_2\).
For each \(\eps>0\) there can be found an \(\eps\)-near-sequential-\(\Pi\)-path \(\widetilde{\xi}:[0,T]\to\Reals^d\) such that
\begin{itemize}
 \item[] \(\widetilde{\xi}(0)=x_1\), \(\widetilde{\xi}(T)=x_2\);
\item[] \(\sup\{|\xi(t)-\widetilde{\xi}(t)|:t\in[0,T]\}<\eps\).
\end{itemize}
\end{thm}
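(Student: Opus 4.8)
The plan is to build $\widetilde\xi$ by following $\xi$ verbatim along the finitely many lines it uses at appreciable speed, inserting short slow straight chords only at the transitions, and hiding all the slow dawdling inside stationary ``waits'' on those same lines. First I would record the reductions that make the problem finite. Since $\xi$ is continuous on the compact interval $[0,T]$, its image lies in some ball $\ball(\origin,R)$, and, obeying the speed-limits, $\xi$ is globally Lipschitz there. Fix a small threshold $\delta\in(0,\eps)$, to be specified. Because $\gamma>1$, the line process $\Pi_\delta$ is locally finite, so only finitely many lines $\ell_1,\dots,\ell_N$ of $\Pi_\delta$ (with speeds $v_1,\dots,v_N\geq\delta$, all at most some $v_{\max}$) meet $\ball(\origin,R)$. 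Writing $C_i=\{t:\xi(t)\in\ell_i\}$ (closed) and $O=\{t:\xi(t)\notin\bigcup_i\ell_i\}$ (open), the equivalent $\Pi$-path condition of Remark \ref{rem:pi-path} gives $|\xi'(t)|<\delta$ for almost all $t\in O$, while on each $C_i$ Lemma \ref{lem:pi-path-and-intersections} shows $\xi$ runs parallel to $\ell_i$ at speed at most $v_i$; by Corollary \ref{cor:network-intersections} the overlaps $C_i\cap C_j$ carry no moving time, so up to a null set the $C_i$ partition the ``on-line'' time $[0,T]\setminus O$.

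Next I would pass to a finite skeleton. The components of the $C_i$ are closed intervals on which $\xi$ stays on a single line, and their total length is at most $T$; so I retain finitely many of them, $R_1=[c_1,d_1],\dots,R_m=[c_m,d_m]$ (ordered in time, and chosen to include the earliest and latest on-line excursions), in such a way that the discarded components have total length below a prescribed $\eta$. The $\xi$-length accumulated off the retained runs is then at most $\delta T+v_{\max}\eta$, the slow motion contributing $\delta T$ and the discarded runs at most $v_{\max}\eta$; by taking $\delta$ and $\eta$ small I arrange $\delta T+v_{\max}\eta<\eps^2/4$. On each retained run I set $\widetilde\xi=\xi$, a genuine line-interval meeting condition (a) of Definition \ref{def:near-sequential-pi-path} with $\widetilde\ell_r=\ell_{\sigma(r)}$ and $\widetilde v_r=v_{\sigma(r)}$. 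Each inter-run interval $G_r=[d_r,c_{r+1}]$, and each terminal piece, satisfies $\int_{G_r}|\xi'(t)|\,\d{t}<\eps^2/4<\eps/2$, so $\xi$ remains within $\eps/2$ of its endpoints throughout $G_r$. I therefore let $\widetilde\xi$ wait at the point $\xi(d_r)\in\ell_{\sigma(r)}$ for the bulk of $G_r$ (a zero-speed line-interval, merged with $R_r$, keeping $|\widetilde\xi-\xi|<\eps/2$), and then traverse the short chord from $\xi(d_r)$ to $\xi(c_{r+1})$ at constant speed $\eps/2$ over a terminal sub-interval of length $\tau_r=2|\xi(c_{r+1})-\xi(d_r)|/\eps$, with the evident modification at the initial gap, where a short chord out of $x_1$ precedes the wait. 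These chords are the only gaps; since each chord is no longer than the arclength of its interval, their total duration is $\sum_r\tau_r\leq(2/\eps)\cdot(\text{total off-run arclength})<(2/\eps)(\eps^2/4)=\eps/2<\eps$, giving condition (c), while (b) holds by construction and $|\widetilde\xi-\xi|<\eps$ holds on every piece.

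The hard part is precisely this last reconciliation. Finiteness of the dissection forces me to discard infinitely much of the fine structure of $\xi$, yet condition (c) forbids the gaps from absorbing the (possibly long) total slow time, so the slow motion must be concealed in stationary waits on retained lines rather than charged to the gaps. Making the three demands hold at once --- finitely many intervals, total gap duration below $\eps$, and uniform closeness --- is what fixes the dependence of $\delta$ and $\eta$ on $\eps$, and it also calls for the little extra care at the initial and terminal gaps, where there is as yet no line to wait on and one must exploit that $\xi$ cannot have strayed far from $x_1$ (respectively $x_2$) before the first (respectively after the last) retained run. Checking continuity of $\widetilde\xi$ at the junctions and the constant-velocity clause of (b) on each gap is then routine. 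It is this bookkeeping, rather than any single estimate, that constitutes the substance of the proof.
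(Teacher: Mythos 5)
There is a genuine gap, and it sits exactly at the step you call ``pass to a finite skeleton''. You retain finitely many connected components of the visit sets \(C_i=\{t:\xi(t)\in\ell_i\}\) so that the \emph{discarded components have total length below \(\eta\)}, and then assert that the time spent off the retained runs, other than the slow time in \(O\), is at most \(\eta\). This conflates two different quantities: the sum of the lengths of the discarded components, and the Lebesgue measure of the discarded part of \(C_i\). They coincide only if \(C_i\) is, up to a null set, the union of its non-degenerate interval components — and nothing in Definition \ref{def:pi-path} guarantees this. A \(\Pi\)-path can visit a line \(\ell_i\) in a nowhere-dense time-set of positive measure (a fat-Cantor-type set), e.g.\ by running along \(\ell_i\) while making arbitrarily small excursions onto slower lines during the complementary open intervals. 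Then every component of \(C_i\) is a singleton, your finite selection captures zero on-line time, the ``discarded components'' have total length \(0<\eta\), and yet the discarded on-line time has full measure \(\Leb(C_i)\), on which \(\xi\) may move at speed up to \(v_i\). Your bound \(\delta T+v_{\max}\eta\) on the off-run arclength therefore fails, and with it the uniform-closeness estimate and the gap-duration estimate for condition (c) of Definition \ref{def:near-sequential-pi-path}. This is not a side issue: the impossibility of treating \(\Pi\)-paths as consecutive sequences of line intervals is precisely the difficulty the theorem exists to address (the paper stresses this at the start of Section \ref{sec:lipschitz-paths}, and Theorem \ref{thm:encounters} has to work hard to get a finite-union-of-intervals property even for planar \emph{geodesics}).

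The paper's proof contains exactly the device your argument is missing. Instead of setting \(\widetilde\xi=\xi\) on components of the visit set, it defines \(\widetilde\xi\) on the set of times where \(\xi\) lies on \(\ell_1\) \emph{together with all excursions from \(\ell_1\) of amplitude less than \(\eps\)}, by taking the orthogonal projection of \(\xi\) onto \(\ell_1\) there. Projection is \(1\)-Lipschitz, so the projected path still obeys the speed limit \(v_1\), stays on \(\ell_1\), and remains within \(\eps\) of \(\xi\); and, crucially, the covered time-set \emph{is} a finite union of intervals, because any excursion not absorbed must reach distance \(\eps\) from \(\ell_1\) and hence, by the \(\Lipschitz(v_1)\) property, must last at least \(2\eps/v_1\), so there are at most \(\tfrac12 v_1T/\eps\) of them. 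Iterating over the relevant lines in decreasing order of speed leaves finitely many gaps of total duration less than \(\eps\), on which \(\xi\) is \(\Lipschitz(v_{n+1})\) with \(v_{n+1}<\eps\), and linear interpolation finishes. Your outer bookkeeping — stationary waits on lines, constant-velocity chords, the care at the initial and terminal gaps — is sound and close in spirit to the paper's final step, but without the excursion-absorbing projection the finite dissection you need never gets off the ground.
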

Before proving this, we state the following important corollary, whose proof is an immediate consequence.
\begin{cor}\label{cor:pi-path-approximation}
 Suppose only that \(\gamma> 1\). Every \(\Pi\)-path defined up to finite time \(T\) can be uniformly 
approximated by a sequence of \(\eps\)-near-sequential-\(\Pi\)-paths such that \(\eps\downarrow0\) along the sequence.
\end{cor}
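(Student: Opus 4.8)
The plan is to obtain the corollary as a direct specialization of Theorem~\ref{thm:pi-path-approximation} to a null sequence of tolerance parameters. First I would fix the given $\Pi$-path $\xi:[0,T]\to\Reals^d$, defined up to the finite time $T$ and running from $x_1$ to $x_2$, and then fix any sequence of positive tolerances $\eps_n\downarrow0$ (for concreteness, $\eps_n=1/n$ will serve). Note that the standing hypothesis $\gamma>1$ is exactly the condition under which Theorem~\ref{thm:pi-path-approximation} applies, since it is what guarantees that each proper line process $\Pi_v$ is locally finite.

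Next, for each index $n$ I would invoke Theorem~\ref{thm:pi-path-approximation} with $\eps=\eps_n$. The theorem then supplies an $\eps_n$-near-sequential-$\Pi$-path $\widetilde{\xi}_n:[0,T]\to\Reals^d$ which shares the endpoints of $\xi$, namely $\widetilde{\xi}_n(0)=x_1$ and $\widetilde{\xi}_n(T)=x_2$, and which satisfies the uniform bound $\sup_{t\in[0,T]}|\xi(t)-\widetilde{\xi}_n(t)|<\eps_n$.

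Finally, I would collect these paths into the sequence $\widetilde{\xi}_1,\widetilde{\xi}_2,\ldots$ and verify the two required properties. By construction each $\widetilde{\xi}_n$ is an $\eps_n$-near-sequential-$\Pi$-path and the associated tolerances satisfy $\eps_n\downarrow0$ along the sequence; moreover the uniform distance $\sup_{t\in[0,T]}|\xi(t)-\widetilde{\xi}_n(t)|$ is dominated by $\eps_n\to0$, so $\widetilde{\xi}_n\to\xi$ uniformly on $[0,T]$. This is precisely the assertion of the corollary.

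There is essentially no obstacle to overcome here: all the analytic content---the existence of a single approximating near-sequential path for each prescribed tolerance, together with control of its endpoints and of its uniform distance from $\xi$---has already been established in Theorem~\ref{thm:pi-path-approximation}. The corollary merely repackages that statement along a sequence $\eps_n\downarrow0$, so the only thing left to observe is the trivial fact that a family of uniform bounds indexed by $\eps_n$ which individually tend to zero yields uniform convergence of the corresponding sequence of approximants.
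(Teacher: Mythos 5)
Your proposal is correct and is exactly the argument the paper intends: the paper states that the corollary's proof ``is an immediate consequence'' of Theorem~\ref{thm:pi-path-approximation}, namely applying that theorem along a null sequence of tolerances \(\eps_n\downarrow0\) and noting that the resulting uniform bounds force uniform convergence. Nothing further is needed.
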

\begin{proof}[Proof of Theorem \ref{thm:pi-path-approximation}]
 It suffices to prove the result for a fixed positive \(\eps<1\).

The Poisson line process \(\Pi\) has no triple intersections, and therefore a
given \(\Pi\)-path \(\xi\) can only ever lie on at most two lines simultaneously. Hence by countable exhaustion (based on ordering by \(\Leb\{t\in[0,T]:\xi(t)\in\ell\}\)) it follows that there can be only countably many marked lines \((\ell,v)\in\Pi\) such that \(\Leb\{t\in[0,T]:\xi(t)\in\ell\}>0\) (so that \(\ell\) and \(\xi\) intersect in a time-set of positive measure).

Since \(\xi\) is continuous, we know that 
the image \(\Image(\xi)\) of \(\xi\) is compact, and therefore (since \(\Pi_v\) is locally finite for any positive \(v\)) the lines intersecting \(\xi\) in time-sets of positive measure can be sequentially ordered by speed in decreasing order:
\begin{align}
               & (\ell_1, v_1), \quad         (\ell_2, v_2), \quad         (\ell_3, v_3), \quad         \ldots \label{eq:positive-decreasing-order}\\
  \text{ and } \quad & v_1        \quad\geq\quad     v_2        \quad\geq\quad          v_3   \quad\geq\quad \ldots \nonumber \,.
\end{align}
Note that we do \emph{not} presume that the \(\ell_i\) are visited sequentially in order.

It is an immediate consequence of the above that \(\xi\) is \(\Lipschitz(v_1)\). Moreover a consequence of the countable exhaustion 
construction is that 
\[
 \Leb\{t\in[0,T]: \xi(t)\not\in \ell_1\cup\ell_2\cup\ldots\cup\ell_n\} \to 0 \text{ as }n\to\infty\,.
\]
Therefore for all \(\eps>0\), for all sufficiently large \(n\),
\begin{align*}
                   &\Leb\{t\in[0,T]: \xi(t)\not\in \ell_1\cup\ell_2\cup\ldots\cup\ell_n\} \quad<\quad\eps\,,
\\
 \text{ and } \quad&\qquad v_{n+1} \quad<\quad \eps\,. 
\end{align*}
We use the finite sequence \((\ell_1,v_1)\), \((\ell_2,v_2)\), \ldots \((\ell_n,v_n)\)
to generate an \(\eps\)-near-sequential-\(\Pi\)-path \(\widetilde{\xi}\) which approximates \(\xi\) in uniform norm.
Note that this finite sequence is \emph{not} the same as the finite sequence
\((\widetilde{\ell}_1,\widetilde{v}_1)\),  \((\widetilde{\ell}_2,\widetilde{v}_2)\), \ldots,  \((\widetilde{\ell}_m,\widetilde{v}_m)\)
from Definition \ref{def:near-sequential-pi-path}, but is used to construct it iteratively.

We begin by setting \(\widetilde{\xi}(0)=\xi(0)\) and \(\widetilde{\xi}(T)=\xi(T)\).

Consider first the time set \(\{t\in[0,T]:\xi(t)\in\ell_1\}\). The \(\Pi\)-path \(\xi\) makes a countable number of excursions
away from \(\ell\), and the excursion intervals form the connected components of the (relatively) open set
\([0,T]\setminus\{t\in[0,T]:\xi(t)\in\ell_1\}\). 
There are at most two incomplete excursions in the interval \([0,T]\), namely the beginning excursion,
anchored to \(x_1\) at time \(0\) on the left, and the end excursion, anchored to \(x_2\) at time \(T\) on the right. In addition there can be at most finitely 
many complete excursions for which \(\dist(\xi,\ell)\) reaches the level \(\eps\) (in fact a calculation using the \(\Lipschitz(v_1)\)
property of \(\xi\) gives an upper bound on the number of such excursions, namely \(\half v_1 T/\eps\)). We set
\begin{align*}
& U_0 \quad=\quad [0,T]\,,\\
& U_1 \quad=\quad 
\\
&\{t\in U_0:\xi(t)\not\in\ell_1\}
\setminus \bigcup\{(a,b)\subset U_0: \xi(a), \xi(b)\in\ell_1, \; 0<\dist(\xi(t),\ell_1)<\eps \text{ for }a<t<b\}\,.
\end{align*}
So \(U_1\) is a finite union of intervals (relatively open in \(U_0\)).

Define \(\widetilde{\xi}\) on \(U_0\setminus U_1\) as the orthogonal projection of \(\xi\) on \(\ell_1\). By the properties of 
orthogonal projection and the \(\Lipschitz(v_1)\) property of \(\xi\) on \(U_0\), it follows that \(|\widetilde{\xi}'(t)|\leq v_1\)
for almost all \(t\in U_ 0\setminus U_1\). Moreover, by construction, 
\begin{align*}
|\widetilde{\xi}(t)-\xi(t)| \quad&=\quad 0 & \text{ if } \xi(t)\in \ell_1 \text{ and } t\in U_0\setminus U_1 \,,\\
|\widetilde{\xi}(t)-\xi(t)| \quad&<\quad \eps & \text{ for other } t \in U_0\setminus U_1\,.
\end{align*}
Hence  \(|\widetilde{\xi}(t)-\xi(t)|<\eps\) for all \(t \in U_0\setminus U_1\). Finally, note that
\(\{t\in[0,T]:\xi(t)\in\ell_1\}\subseteq [0,T]\setminus U_1\).

Now define 
\begin{align*}
& U_2 \quad=\quad \\
& \{t\in U_1:\xi(t)\not\in\ell_2\}
\setminus \bigcup\{(a,b)\subset U_1: \xi(a), \xi(b)\in\ell_2, \; 0<\dist(\xi(t),\ell_2)<\eps \text{ for }a<t<b\}\,,
\end{align*}
and note that by construction \(\ell_1\) cannot intersect \(\xi\) in a time-set of positive measure in \(U_1\), so that \(\xi\) is
\(\Lipschitz(v_2)\) in the time-set \(U_1\). We can argue as before that \(U_2\) is a finite union of relatively open intervals.
We can extend the definition of \(\widetilde{\xi}\) to \(U_1\setminus U_2\) by using orthogonal projection of \(\xi\) onto \(\ell_2\):
we have  \(|\widetilde{\xi}'(t)|\leq v_2\)
for almost all \(t\in U_1\setminus U_2\), and \(|\widetilde{\xi}(t)-\xi(t)|<\eps\) for all \(t \in U_1\setminus U_2\).
Moreover, \(\{t\in[0,T]:\xi(t)\in\ell_1\cup\ell_2\}\subseteq [0,T]\setminus U_2\).

The construction is illustrated in Figure \ref{fig:near-pi}
\begin{Figure}
  \includegraphics[width=3in]{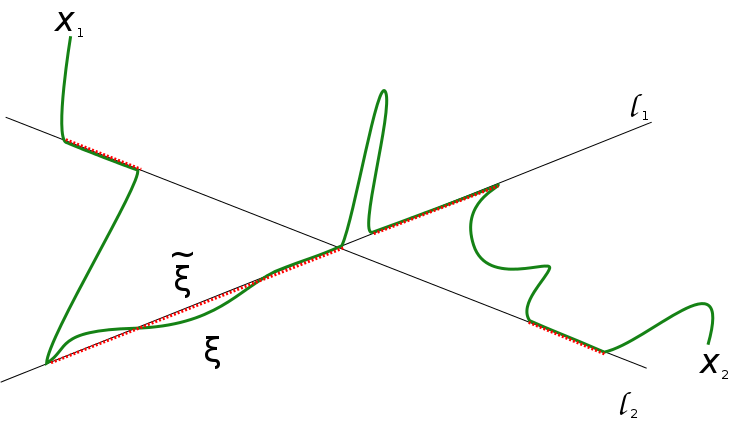}
\centering
\caption{\label{fig:near-pi}
First two stages of the iterative construction of an \(\eps\)-near-sequential-\(\Pi\)-path. The solid curve
represents the trajectory of the \(\Pi\)-path \(\xi\). The dotted segments represent the partially-defined trajectory of \(\widetilde{\xi}\)
after these first two stages (later stages successively fill in the gaps).
Note that \(\widetilde{\xi}\) is defined (a) when \(\xi\) runs along one of the lines \(\ell_1\), \(\ell_2\) and (b) when 
\(\xi\) makes small excursions from one of these lines.             
}
\end{Figure}

Iterating this construction, we end up defining \(\widetilde{\xi}\) on a time-set \([0,T]\setminus U_n\) 
containing \(\{t\in[0,T]:\xi(t)\in \ell_1\cup\ldots\cup\ell_n\}\), such that
\(\widetilde{\xi}(t)\in\ell_1\cup\ell_2\cup\ldots\cup\ell_n\) for \(t\in[0,T]\setminus U_n\),
with
\(|\widetilde{\xi}'(t)|\leq v_r\)
for almost all \(t\) such that \(\widetilde{\xi}(t)\in \ell_r\), for \(r=1\), \(2\), \ldots \(n\), and finally
\(|\widetilde{\xi}(t)-\xi(t)|<\eps\) if \(t\in[0,T]\setminus U_n\). Since
\(\{t\in[0,T]:\xi(t)\in \ell_1\cup\ldots\cup\ell_n\}\subseteq[0,T]\setminus U_n\), it follows from the countable exhaustion construction
that \(\Leb([0,T]\setminus U_n)<\eps\). 

We next complete the construction on the finite family of excursion intervals which are the connected components of the relatively
open set \(U_n\). 
Note that by construction \(\widetilde{\xi}\) agrees with \(\xi\) on the end-points of these excursion intervals.
None of the lines \(\ell_1\), \(\ell_2\), \ldots, \(\ell_n\) intersect \(U_n\) in a time-set of positive measure:
therefore \(\xi\) satisfies a \(\Lipschitz(v_{n+1})\) property on \(U_n\). Hence for each of these excursion intervals, if \(a\) and \(b\) are the 
end-points then \(|\xi(b)-\xi(a)|\leq (b-a)v_{n+1}<(b-a)\eps\). 

Accordingly we can define \(\widetilde{\xi}\)
by linear interpolation over the excursion interval
(so that \(\widetilde{\xi}'\) is indeed constant over this excursion interval), 
with the result that \(|\widetilde{\xi}'(t)|<\eps\) for almost all \(t\in[a,b]\). Finally
the \(\Lipschitz(v_{n+1})\) property implies that \(|\xi(t)-\xi(a)|\) and \(|\xi(t)-\xi(b)|\) are both strictly bounded above
by \(|b-a|\eps \leq \eps^2 \leq \eps\) when \(a<t>b\) (use \(|b-a|\leq \Leb([0,T]\setminus U_n)<\eps\) and \(\eps\leq1\)); it follows by convexity that the same bound holds 
if \(a\) and \(b\) are replaced by the piecewise interpolant \(\widetilde{\xi}(t)\):
\[
 |\xi(t)-\widetilde{\xi}(t)|\quad<\quad \eps\,.
\]
It follows that \(\widetilde{\xi}\) is the required \(\eps\)-near-sequential-\(\Pi\)-path approximating \(\xi\) to within \(\eps\)
in uniform norm. The sequence 
\((\widetilde{\ell}_1,\widetilde{v}_1)\),  \((\widetilde{\ell}_2,\widetilde{v}_2)\), \ldots,  \((\widetilde{\ell}_m,\widetilde{v}_m)\)
is obtained from the successive visits (with repetitions) of \(\widetilde{\xi}\) to the finite sequence of lines
\((\ell_1,v_1)\), \((\ell_2,v_2)\), \ldots \((\ell_n,v_n)\).
\end{proof}

\begin{rem}\label{rem:geodesic-simplification}
 If \(\xi\) is a \(\Pi\)-geodesic then the above construction can be simplified. Using the notation of the proof, suppose
that \((a,b)\) is a connected component of \(U_n\). The maximum speed of \(\xi\)
in \(U_n\) is \(v_{n+1}\): consequently if \(\xi(s)\) and \(\xi(t)\) belong to \(\ell_{n+1}\) for \(s<t\), both 
belonging to \((a,b)\),
then the fastest route from \(\xi(s)\) to \(\xi(t)\) must run along \(\ell_{n+1}\) at maximum permitted speed \(v_n\). Consequently \(\{t\in(a,b):\xi\in\ell_{n+1}\}\)
must already be a relatively closed interval in \((a,b)\), so that there is no need to use the excursion construction in the proof of the theorem.
\end{rem}

We have seen that, under the weak condition \(\gamma>1\), 
every \(\Pi\)-path can be uniformly approximated by a sequence of \(\eps\)-near-sequential-\(\Pi\)-paths with \(\eps\downarrow0\).
Conversely, if we strengthen the condition on \(\gamma\) to \(\gamma\geq d\) (so that the \emph{a priori} bound
of Theorem \ref{thm:a-priori-bound} is available) then
there is a kind of compactness result for \(\eps\)-near-sequential-\(\Pi\)-paths.

\begin{thm}\label{thm:near-sequential-compactness}
Suppose that \(\gamma\geq d\geq2\) and \(T<\infty\).
For \(n=1\), \(2\), \ldots, let \(\widetilde{\xi}_n:[0,T]\to\Reals^d\) be an
\(\eps_n\)-near-sequential-\(\Pi\)-path from \(x\) to \(y\), with \(\eps_n\downarrow0\).
Then there are subsequences \(\{\widetilde{\xi}_{n_k}: k=1, 2, \ldots\}\) which converge uniformly 
to \(\Pi\)-path limits.
\end{thm}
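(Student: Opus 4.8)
The plan is to obtain enough uniform control on the family \(\{\widetilde{\xi}_n\}\) to invoke the Arzel\`a--Ascoli theorem, securing a uniformly convergent subsequence, and then to check that the limit is a genuine \(\Pi\)-path. The conceptual difficulty is that an \(\eps_n\)-near-sequential-\(\Pi\)-path obeys only the \emph{relaxed} speed-limit \(\max\{\eps_n,V\}\) (it may move at speed up to \(\eps_n\) off the lines of \(\Pi\)), so neither Theorem \ref{thm:a-priori-bound} nor the \(\Pi\)-path property applies to the \(\widetilde{\xi}_n\) directly; the work is to show these relaxations wash out in the limit.

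First I would secure uniform bounds. Since \(\eps_n\downarrow0\) we may assume \(\eps_n\leq1\) throughout. Then \(|\widetilde{\xi}_n'(t)|\leq\max\{1,V(\widetilde{\xi}_n(t))\}\) for almost all \(t\): on the on-line intervals this is condition (a) of Definition \ref{def:near-sequential-pi-path}, and on the gap intervals it is condition (b) together with \(\eps_n\leq1\). The comparison argument of Theorem \ref{thm:a-priori-bound} therefore applies with the maximal-speed function \(\overline{V}\) replaced by \(\max\{1,\overline{V}\}\); since \(\overline{V}(r)\to\infty\) as \(r\to\infty\), this modification changes the comparison ODE only over the bounded region where \(\overline{V}<1\) and so does not affect the conclusion that the comparison process reaches infinity only in infinite time. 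This yields a single random constant \(C'=C'(\gamma,T,\Pi)<\infty\) with \(|\widetilde{\xi}_n(t)|\leq C'\) for all \(t\in[0,T]\) and all \(n\). The lines of \(\Pi\) meeting \(\ball(\origin,C')\) then have speeds bounded by the finite random value \(\overline{V}(C')\), so on on-line intervals \(|\widetilde{\xi}_n'|\leq\overline{V}(C')\) while on gap intervals \(|\widetilde{\xi}_n'|<\eps_n\leq1\); hence every \(\widetilde{\xi}_n\) is Lipschitz with the common constant \(A=\max\{1,\overline{V}(C')\}\). The family \(\{\widetilde{\xi}_n\}\) is thus uniformly bounded and equicontinuous, so Arzel\`a--Ascoli supplies a subsequence \(\widetilde{\xi}_{n_k}\) converging uniformly to some \(\xi:[0,T]\to\Reals^d\); the bound \(A\) passes to \(\xi\), so \(\xi\) is Lipschitz (hence lies in \(\Sobolev\)), and uniform convergence preserves the endpoints \(\xi(0)=x\), \(\xi(T)=y\).

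It remains to verify that \(\xi\) is a genuine \(\Pi\)-path, and this is the step I expect to be the main obstacle, since each \(\widetilde{\xi}_{n_k}\) respects only the relaxed limit \(\max\{\eps_{n_k},V\}\). Here I would follow the argument of Theorem \ref{thm:weak-closure}. Fix \(v>0\) and a closed interval \([r,s]\) contained in a single connected component of the open set \(\{t:\xi(t)\not\in\Silhouette_v\}\); then \(\{\xi(t):r\leq t\leq s\}\) is compact and disjoint from \(\Silhouette_v\), hence from \(\Silhouette_{v-\delta}\) for some \(\delta\in(0,v)\). By uniform convergence, for all large \(k\) the set \(\{\widetilde{\xi}_{n_k}(t):r\leq t\leq s\}\) avoids \(\Silhouette_{v-\delta}\); consequently on each on-line interval meeting \([r,s]\) the associated line-speed is strictly below \(v-\delta\) (otherwise the path would enter \(\Silhouette_{v-\delta}\)), while on each gap interval the speed is below \(\eps_{n_k}\). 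Taking \(k\) large enough that \(\eps_{n_k}<v-\delta\) gives \(|\widetilde{\xi}_{n_k}'(t)|<v-\delta\) for almost all \(t\in[r,s]\), so \(\widetilde{\xi}_{n_k}\), and therefore its uniform limit \(\xi\), is Lipschitz with constant \(v-\delta\) on \([r,s]\). The crucial point is precisely that the relaxation \(\eps_{n_k}\) drops below the threshold \(v-\delta\), so that the vanishing-speed gap excursions leave no trace in the limit. Expressing \(\{t:\xi(t)\not\in\Silhouette_{v-\delta}\}\) as a countable union of such intervals and letting \(\delta\to0\) yields \(|\xi'(t)|\leq v\) for almost all \(t\) with \(\xi(t)\not\in\Silhouette_v\); ranging \(v\) over the rationals then gives \(|\xi'(t)|\leq V(\xi(t))\) for almost all \(t\), so \(\xi\) is a \(\Pi\)-path by Remark \ref{rem:pi-path}, Condition \ref{def:pi-path-condition3}, completing the argument.
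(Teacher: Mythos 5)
Your proposal is correct and takes essentially the same route as the paper's own proof: a single relaxed speed-limit (\(\max\{\eps_1,V\}\) in the paper, \(\max\{1,V\}\) in yours) feeding into the comparison argument of Theorem \ref{thm:a-priori-bound} to get a uniform bound and uniform Lipschitz constant, then Arzel\`a--Ascoli, then a local Lipschitz-persistence argument off \(\Silhouette_v\) to verify the limit is a \(\Pi\)-path via Remark \ref{rem:pi-path}. The only cosmetic difference is that you run the final step through the margin \(\Silhouette_{v-\delta}\) as in Theorem \ref{thm:weak-closure}, whereas the paper works directly with the open set \(\Silhouette_v^c\); both are sound.
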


\begin{proof}
Since \(\eps_n\) is decreasing in \(n\), 
each \(\eps_n\)-near-sequential-\(\Pi\)-path \(\widetilde{\xi}_n\) obeys the single modified speed-limit \(\max\{\eps_1,V\}\).
Hence the comparison argument of Theorem \ref{thm:a-priori-bound} can be adapted to show that all the 
\(\eps_n\)-near-sequential-\(\Pi\)-paths \(\widetilde{\xi}_1\), \(\widetilde{\xi}_2\), \ldots lie in a single ball \(B\)
of radius \(R\) depending on \(V\) and \(\eps_1\).

Consequently the \(\widetilde{\xi}_1\), \(\widetilde{\xi}_2\), \ldots obey a uniform Lipschitz condition (with Lipschitz
constant given by the speed of the fastest line to hit the ball \(B\)); therefore by the Arzela-Ascoli theorem we can extract a uniformly convergent
subsequence \(\{\widetilde{\xi}_{n_k}: k=1, 2, \ldots\}\) whose limit \(\widetilde{\xi}_\infty\) is also a Lipschitz path
with the same Lipschitz constant.

The persistence of Lipschitz constants in the limit also holds locally. 
For fixed \(\lambda>0\), consider the open set \(\Silhouette_v^c=\{x:V(x)<v\}\). 
Fix \(0<s<t<T\) such that \(\Image(\widetilde{\xi}_\infty|_{[s,t]})\subset\Silhouette_v^c\).
But \(\widetilde{\xi}_\infty\) is continuous, so \(\Image(\widetilde{\xi}_\infty|_{[s,t]})\) is compact; therefore the uniform convergence
of \(\widetilde{\xi}_{n_k}\to\widetilde{\xi}_\infty\) implies that for all \(k\geq k_\lambda\) we have
\[
 \Image(\widetilde{\xi}_n|_{[s,t]})\quad\subset\quad \Silhouette_v^c\,.
\]
It follows that if \(k\geq k_\lambda\) then \(\widetilde{\xi}_{n_k}\) satisfies a \(\Lipschitz(\max\{\eps_{n_k}, v\})\) condition
over the time set \([s,t]\).
Bearing in mind that \(\eps_n\downarrow0\), 
we deduce that \(\widetilde{\xi}_\infty\) satisfies
a \(\Lipschitz(v)\) condition whenever \(\widetilde{\xi}_\infty\) belongs to \(\Silhouette_v^c\).
This implies that the following is a Lebesgue-null subset of \([0,T]\):
\[
 \{t\in[0,T]: |\widetilde{\xi}_\infty'(t)|> v \text{ and } \widetilde{\xi}_\infty(t)\not\in\Silhouette_v\}\,.
\]
Thus the subsequential limit \(\widetilde{\xi}_\infty\) is a \(\Pi\)-path (Definition \ref{def:pi-path}).
%
%
\end{proof}

This allows us to deduce the measurability of the random variable which is given by the time taken for a \(\Pi\)-geodesic to move between
specified end-points \(x_1\) and \(x_2\).

\begin{cor}\label{cor:measurable-time}
 Suppose that \(\gamma>d\). Fix \(x_1\) and \(x_2\) in \(\Reals^d\), and let \(T\) be the least time such that there is a \(\Pi\)-path running from \(x_1\) to \(x_2\) in time \(T\), equivalently, 
 such that the (possibly non-unique) \(\Pi\)-geodesic from \(x_1\) to \(x_2\) has duration \(T\).
Then \(T\) is a function of the marked line process \(\Pi\): it is in fact measurable and hence a random variable.
\end{cor}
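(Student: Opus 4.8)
The plan is to express \(T\) as a countable limit of durations of the \emph{constructive} objects introduced earlier — the \(\eps\)-near-sequential-\(\Pi\)-paths — whose finite combinatorial description makes measurability transparent, and then to invoke the approximation and compactness results of Theorem \ref{thm:pi-path-approximation} and Theorem \ref{thm:near-sequential-compactness} to identify this limit with \(T\). For each \(\eps>0\) set
\[
\tau_\eps \;=\; \inf\left\{S>0 \;:\; \text{there is an } \eps\text{-near-sequential-}\Pi\text{-path } \widetilde\xi\colon[0,S]\to\Reals^d \text{ from } x_1 \text{ to } x_2\right\}\,.
\]
Observe first that an \(\eps\)-near-sequential-\(\Pi\)-path is automatically an \(\eps'\)-near-sequential-\(\Pi\)-path for every \(\eps'>\eps\) (conditions (b) and (c) of Definition \ref{def:near-sequential-pi-path} only weaken as the tolerance grows), so the admissible class grows with \(\eps\), whence \(\tau_\eps\) decreases as \(\eps\) increases and \(\tau:=\lim_{k\to\infty}\tau_{1/k}\) exists. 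The two things to establish are then (A) that \(\tau=T\), and (B) that each \(\tau_\eps\) is a measurable functional of \(\Pi\); granting these, \(T=\lim_k\tau_{1/k}\) is measurable as a pointwise limit of random variables.

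For (A), the inequality \(\tau_\eps\leq T\) is immediate from Theorem \ref{thm:pi-path-approximation}: a \(\Pi\)-geodesic of duration \(T\) (which exists by Corollary \ref{cor:geodesics}) yields, for every \(\eps>0\), an \(\eps\)-near-sequential-\(\Pi\)-path on the same interval \([0,T]\) joining \(x_1\) to \(x_2\); letting \(\eps\to0\) gives \(\tau\leq T\). For the reverse bound suppose \(\tau<T\). Then for large \(k\) one may choose \((1/k)\)-near-sequential-\(\Pi\)-paths \(\widetilde\xi_k\) of durations \(S_k\) with \(\limsup_k S_k\leq\tau<T\), and (since speeds are bounded by the fastest line hitting the relevant ball) with \(S_k\) bounded below by \(|x_1-x_2|/\overline V\). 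Applying a linear time-change onto a common horizon \(T_0\in(\tau,T)\) — which merely rescales the tolerances by the bounded factor \(T_0/S_k\), producing \(\eps'_k\)-near-sequential-\(\Pi\)-paths with \(\eps'_k\downarrow0\) — Theorem \ref{thm:near-sequential-compactness} extracts a uniformly convergent subsequence whose limit is a genuine \(\Pi\)-path from \(x_1\) to \(x_2\) of duration at most \(T_0<T\), contradicting the minimality of \(T\). Hence \(\tau=T\).

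The decisive step is (B), and here the finiteness built into Definition \ref{def:near-sequential-pi-path} is essential. By the comparison argument underlying Theorem \ref{thm:a-priori-bound}, any \(\eps\)-near-sequential-\(\Pi\)-path of duration below a fixed bound remains inside a ball \(\ball(\origin,R)\) with \(R\) a measurable function of \(\Pi\); writing \(\{\tau_\eps<c\}\) as the countable union over \(R\in\mathbb{N}\) of the corresponding confined events, I may restrict to paths lying in \(\ball(\origin,R)\). On such a ball only the finitely many marked lines \((\ell,v)\in\Pi\) with \(v\geq\eps\) and \(\ell\hits\ball(\origin,R)\) are of any use, since travel away from the lines already proceeds at speeds up to \(\eps\), so lines slower than \(\eps\) confer no advantage; because \(\gamma>1\) this collection is almost surely finite, and its cardinality and line-plus-speed parameters are measurable functions of \(\Pi\). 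Conditional on this finite marked-line configuration, the least duration of an \(\eps\)-near-sequential-\(\Pi\)-path from \(x_1\) to \(x_2\) is a finite-dimensional shortest-transit optimization over the dissections and velocities permitted by Definition \ref{def:near-sequential-pi-path}, hence a Borel function of the finitely many reals describing those lines. Consequently \(\{\tau_\eps<c\}\) is measurable.

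I expect the main obstacle to be the verification inside (B) that the finite-configuration minimal duration is genuinely Borel in the line parameters — this requires checking an appropriate lower semicontinuity of the optimization and confirming that restricting to the lines of speed at least \(\eps\) hitting \(\ball(\origin,R)\) does not alter the infimum beyond the tolerance that is anyway absorbed in the limit \(\eps\to0\). The identification \(\tau=T\) and the final passage to the limit are comparatively routine once the measurability of each \(\tau_\eps\) is in hand.
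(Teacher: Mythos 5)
Your outer skeleton---defining \(\tau_\eps\) as the least duration of \(\eps\)-near-sequential connections, identifying \(\lim_{\eps\downarrow0}\tau_\eps\) with \(T\) via Theorems \ref{thm:pi-path-approximation} and \ref{thm:near-sequential-compactness}, and reducing measurability of \(T\) to measurability of each \(\tau_\eps\)---is exactly the paper's argument, phrased with infima instead of the paper's events \(E_{\eps,\tau}\) and their countable intersection. Your part (A) is sound; indeed the linear time-change onto a common horizon is more careful than the paper's parenthetical device of holding the path at its destination, which taken literally can violate condition (c) of Definition \ref{def:near-sequential-pi-path}. The genuine gap is in part (B), and it is not the technical verification you flag but the reduction itself: the claim that lines of speed below \(\eps\) ``confer no advantage'' is false. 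It overlooks condition (c), which caps the \emph{total} off-line time by \(\eps\) and hence the total off-line displacement by \(\eps^2\); time spent travelling along a slow line (which may far exceed \(\eps\)) cannot be reclassified as off-line travel. Concretely, since \(x_1\) and \(x_2\) almost surely lie on no line of \(\Pi\), every \(\eps\)-near-sequential path must end with an off-line jump of length less than \(\eps^2\), so its last on-line segment lies on a line passing within \(\eps^2\) of \(x_2\). In your restricted class that line must have speed at least \(\eps\), and the expected number of lines of speed at least \(\eps\) hitting \(\ball(x_2,\eps^2)\) is of order \(\eps^{2(d-1)}\eps^{-(\gamma-1)}=\eps^{2d-1-\gamma}\), which tends to \(0\) whenever \(d<\gamma<2d-1\). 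For such parameters your restricted class is, for small \(\eps\), typically empty of paths from \(x_1\) to \(x_2\), so the restricted infimum is \(+\infty\): the quantity you prove measurable in (B) is then not the quantity you identified with \(T\) in (A). Moreover, the restriction destroys the monotonicity on which your limit \(\tau=\lim_k\tau_{1/k}\) relies, since lowering \(\eps\) enlarges the admissible set of lines while tightening conditions (b) and (c), so the restricted classes are no longer nested.

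The repair---and the paper's actual route---is countability rather than finiteness. Keep all lines of \(\Pi\), but restrict attention to a countable sub-family of \(\eps\)-near-sequential paths, for instance those whose on-line segments are delimited by the intersection point process of \(\Pi\) in the planar case (in higher dimensions, where lines do not meet, one takes instead a countable dense family of candidate segment endpoints on each line), checking that this restriction does not change whether a connection of duration at most \(\tau\) exists. The event \(\left\{\tau_\eps\leq\tau\right\}\) is then a countable union of events each determined by finitely many marked lines and dissection parameters, hence measurable, and the rest of your argument goes through unchanged.
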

\begin{proof}
Consider the event \(E_{\eps,\tau}\) that there is an \(\eps\)-near-sequential-\(\Pi\)-path from \(x_1\) to \(x_2\) of duration at most \(\tau\).
This event is measurable,
because we may restrict attention to a countable sub-family of \(\eps\)-near-sequential-\(\Pi\)-paths, determined for example
so that constituent line segments are bounded by the intersection point process of \(\Pi\).

But it is a consequence of the above results that
\[
 \bigcap_\eps E_{\eps,\tau} \quad=\quad [\text{ duration of \(\Pi\)-geodesic from \(x_1\) to \(x_2\) is no more than \(\tau\) }]\,.
\]
For Theorem \ref{thm:pi-path-approximation} shows that the existence of such a \(\Pi\)-geodesic leads to the construction
of \(\eps\)-near-sequential-\(\Pi\)-paths from \(x_1\) to \(x_2\) of the same duration as the \(\Pi\)-geodesic. On the other hand
it is an immediate consequence of
Theorem \ref{thm:near-sequential-compactness} that if \(E_{\eps_n,\tau}\) is non-empty for a sequence \(\eps_n\downarrow0\)
then there must exist a \(\Pi\)-path from \(x_1\) to \(x_2\) of duration \(\tau\). (Note that
we may prolong the duration of any \(\eps\)-near-sequential-\(\Pi\)-path simply by holding it at its destination.)
Finally, the events \(E_{\eps,\tau}\) are decreasing in \(n\), so the intersection \(\bigcap_\eps E_{\eps,\tau}\)
can be reduced to a countable intersection. It follows that 
\[
\left[\text{ duration of \(\Pi\)-geodesic from \(x_1\) to \(x_2\) is no more than \(\tau\) }\right]
\]
is a measurable event.
\end{proof}
We note that simple selection criteria can be used to establish measurable maps which yield intervening \(\Pi\)-geodesics for each
pair of end-points \(x_1\) and \(x_2\).

\section[\texorpdfstring{$\Pi$}{Π}-geodesics: a.s. uniqueness in d=2]{\(\Pi\)-geodesics: almost-sure uniqueness in dimension \(2\)}\label{sec:pi-geodesics-uniqueness}
In the simplest non-trivial case, namely \(d=2\), it can be shown that the \(\Pi\)-geodesic between two specified points
is almost surely unique. 
The method of proof makes essential use of the point-line duality of the plane, so
will not extend to the case \(d\geq3\).

\begin{rem}\label{rem:non-uniqueness}
 The assertion of almost sure uniqueness between 
of \(\Pi\)-geodesic connections between two specified points does \emph{not} imply that almost surely all pairs of points are connected by 
unique \(\Pi\)-geodesics: a simple counterexample can be constructed by considering the possibility that three lines, of speeds only just below unit speed,
form an approximate equilateral triangle \(\Delta\)
of near unit-side length. Let \(\rho\) be the perimeter of \(\Delta\), and suppose all other lines within \(\rho/4\) of \(\Delta\) are of less than speed \(1/2\), while all lines hitting
the interior of
\(\Delta\) are of speed substantially less than \(1/2\). (How much less depends on the approximation to equilateral shape.) Both these events have positive probability.
Then consider the two \(\Pi\)-paths of length \(\rho/2\) running either way round \(\Delta\) from a given reference point on the boundary of \(\Delta\). These form two distinct \(\Pi\)-geodesics
between the same end-points. (The construction is illustrated in Figure \ref{fig:non-unique}.)
\end{rem}
Note that structures similar to this counterexample will exist at all scales in the random metric space produced from \(\Reals^2\) furnished with a random metric derived from \(\Pi=\Pi^{(2,\gamma)}\) for \(\gamma>2\).
So these random metric spaces are far from hyperbolic in the sense of Gromov (defined for example in \citealp[\S8.4]{BuragoBuragoIvanov-2001}).
\begin{Figure}
  \includegraphics[width=2.5in]{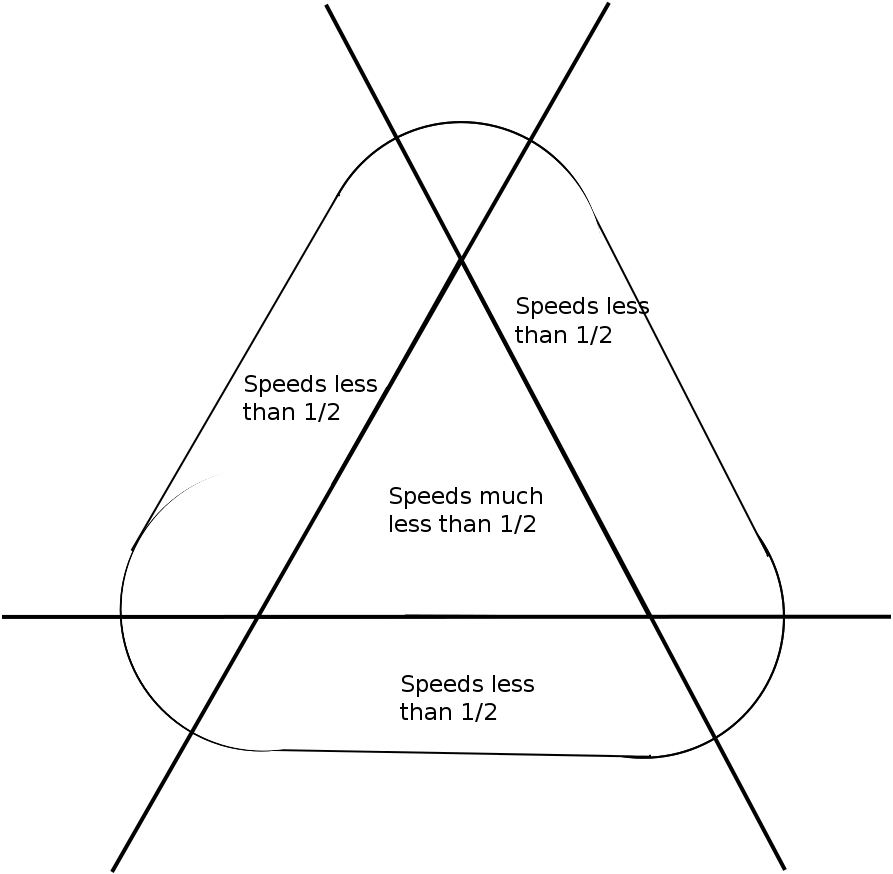}
\centering
\caption{\label{fig:non-unique}
Within the approximate equilateral triangle delineated by three fast lines, speeds are slow enough to prevent short-cuts.
Outside the triangle, up to a distance of one quarter of the perimeter, speeds are slow enough that there is no possibility of \(\Pi\)-geodesics
looping outside this region and then returning to the triangle.
}
\end{Figure}

We begin with a structural result about \(\Pi\)-geodesics in dimension \(2\), namely that if \(\ell\) is a line from \(\Pi\) (hence of positive speed-limit)
which contributes a segment to a \(\Pi\)-geodesic \(\xi\) then \(\xi\) joins and leaves \(\ell\) using simple intersections of \(\ell\) with other lines in \(\Pi\).
\begin{defn}[Proper encounter of a line by a \(\Pi\)-path]\label{def:proper-encounter}
 Suppose \(d=2\) and \(\gamma>2\). We say that a \(\pi\)-path \(\xi\) \emph{encounters a line \(\ell\) of \(\Pi\) properly}
at \(\xi(t)\in\ell\) if there is \(\varepsilon>0\) such that within \(\ball(\xi(t),\varepsilon)\) the \(\Pi\)-path \(\xi\) is not contained solely in \(\ell\), 
but lies in the union of \(\ell\) and a further line \(\widetilde\ell\) 
from \(\Pi\).
\end{defn}
The notion of a proper encounter 
is vacuous for \(\Pi\)-paths in the case \(d>2\), because then almost surely lines of the Poisson line process \(\Pi\) do not intersect each other.

\begin{thm}\label{thm:encounters}
 Suppose \(d=2\) and \(\gamma>2\). 
With probability \(1\), for each line \(\ell\in\Pi\) and each \(\Pi\)-geodesic \(\xi\), the intersections of \(\xi\)
with \(\ell\) form a finite disjoint union of intervals, such that the non-singleton intervals are delimited by proper encounters of \(\xi\) with \(\ell\).
\end{thm}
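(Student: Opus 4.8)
The plan is to analyze the sojourn of \(\xi\) on \(\ell\) entirely locally, using the speed-limit structure of \(\Pi\) together with the optimality and self-avoidance of geodesics. Write \(v=v(\ell)\). Since \(\xi\) has finite duration it is confined to a bounded region (Theorem \ref{thm:a-priori-bound}), so the closed time-set \(I=\{t:\xi(t)\in\ell\}\) has image contained in a compact segment \(\sigma\subset\ell\). Because \(\Pi_{v}\) is locally finite, only finitely many lines of speed at least \(v\) meet a neighbourhood of \(\sigma\); consequently \(\sigma\) carries only finitely many \emph{special} points \(q_1,\dots,q_N\), namely the intersections of \(\ell\) with those lines, while every other (\emph{ordinary}) point \(p\in\sigma\) admits a ball \(\ball(p,\eps_p)\) in which \(\ell\) is the unique line of \(\Pi\) of speed at least \(v\); in particular the supremum \(v_p\) of \(V\) off \(\ell\) within that ball satisfies \(v_p<v\).

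First I would establish a \emph{no-return principle}. If a sub-path of \(\xi\) starts and ends on \(\ell\) inside \(\ball(p,\eps_p)\) but leaves \(\ell\) in between, then it spends positive time at speed at most \(v_p<v\), while the chord along \(\ell\) joining its endpoints is traversable at speed \(v\); comparing elapsed times (the sub-path has length at most \(v_p\,\Leb(\text{off})+v\,\Leb(\text{on})<v\times(\text{elapsed time})\), hence is strictly slower than the chord) contradicts minimality. So at an ordinary point \(\xi\) makes no excursion off \(\ell\) that returns within \(\ball(p,\eps_p)\). At a special point \(q=\ell\cap\widetilde\ell\) the same comparison rules out excursions into the slow background, leaving only a transition between \(\ell\) and \(\widetilde\ell\); since a minimal geodesic cannot intersect itself it passes through \(q\) at most once, so at most one such transition occurs there.

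These local statements yield finiteness. If \(I\) had infinitely many connected components they would accumulate at some \(t^\ast\) with \(p^\ast=\xi(t^\ast)\in\sigma\). Choosing a time-window about \(t^\ast\) so small that \(\xi\) stays in the relevant ball, the no-return principle (ordinary \(p^\ast\)) or the single-transition bound (special \(p^\ast\)) forces \(I\) to meet that window in a single interval, contradicting accumulation. Hence \(I\) is a finite disjoint union of (possibly degenerate) intervals, as is made explicit by writing its complement as finitely many open excursion intervals.

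It remains to treat the endpoints of a non-singleton component \([a,b]\); take \(b<T\) (the symmetric endpoint \(a\), and the cases \(a=0\), \(b=T\) where \(b\) is the geodesic's own endpoint, are handled separately). On \([a,b]\) Lemma \ref{lem:direction} forces \(\xi'\) parallel to \(\ell\) and minimality forces full speed \(|\xi'|=v\), and by the above \(\xi\) does not re-enter \(\ell\) near \(\xi(b)\), so \(\xi\) genuinely departs \(\ell\) at \(b\). The crux — and the step I expect to be the main obstacle — is to show this departure is along a \emph{single} further line \(\widetilde\ell\in\Pi\) for a positive time, rather than by a fractal, tree-like descent through ever-slower lines of the type produced by the connection construction of Theorem \ref{thm:connection}. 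I would argue that \(\xi(b)\) must be a genuine intersection point \(\ell\cap\widetilde\ell\): applying pigeonhole to a sequence \(t_n\downarrow b\) against the finitely many fast lines near \(\xi(b)\) identifies a line \(\widetilde\ell\ni\xi(b)\) recurrently visited at positive speed, and a chord-shortcut comparison (as in the no-return principle) shows \(\xi\) must in fact commit to \(\widetilde\ell\) for a positive time, since a fractal descent would be strictly improvable. Controlling the dense background of slow lines during this comparison is precisely where point-line duality enters: passing to the dual Poisson process one verifies that, almost surely, no degenerate local configuration occurs that would let the locally optimal departure avoid lying on a single line. Granting this, \(\xi\) lies in \(\ell\cup\widetilde\ell\) throughout a small ball about \(\xi(b)\), which is exactly a proper encounter in the sense of Definition \ref{def:proper-encounter}.
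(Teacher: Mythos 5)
There is a genuine gap, and it sits exactly where you predicted the main obstacle would be. Your pigeonhole step does not work: the behaviour to be excluded at the departure time \(b\) is precisely a cascade through ever slower lines with \(|\xi'(t)|\downarrow0\) as \(t\downarrow b\) (this is how the connecting paths of Theorem \ref{thm:connection} actually behave). In that scenario there is no positive lower bound on the speeds used just after \(b\), so the lines visited need not belong to the finite family \(\Pi_u\cap\hittingset{\ball(\xi(b),\eps)}\) for any fixed \(u>0\); lines of arbitrarily small speed near \(\xi(b)\) are dense, and pigeonhole identifies no single recurrently visited line. Your chord comparison is also unavailable here: it compares an excursion that \emph{returns} to \(\ell\) against the chord of \(\ell\), but at a departure point the path leaves \(\ell\) for good locally, so there is nothing to compare against, and ``a fractal descent would be strictly improvable'' is exactly the assertion that needs proof rather than a consequence of anything you have established. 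This is the step the paper spends most of its effort on: it introduces the cost index \eqref{eqn:cost-index}, \(c(\ell)=\csc\theta/v-\cot\theta/w\), the excess time per unit of perpendicular progress towards \(\ell_1\) incurred by riding \(\ell\); a change of variables in the intensity measure shows that lines of cost below any threshold form a locally finite process while expensive lines are dense; and a planar separation argument (the cheapest line \(\widetilde\ell\) separating \(\xi(s)\) from the approach path must be crossed by any competitor, and the cheapest line \(\ell^*\) actually hit by the tail of the approach can be combined with \(\widetilde\ell\) and \(\ell_1\) into a strictly faster route) then forces the arrival to take place along a single line. Your appeal to ``the dual Poisson process'' and the absence of ``degenerate local configurations'' names no event and no estimate, so it cannot substitute for this machinery.

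There is a second, independent gap in your finiteness argument at special points. Your no-return principle is correct for excursions confined to a ball in which \(\ell\) is the unique line of speed at least \(v\), so the accumulation argument is fine at ordinary points; but at \(q=\ell\cap\widetilde\ell\) you bound the number of transitions by one on the grounds that a self-avoiding geodesic visits \(q\) at most once. An excursion of \(\xi\) off \(\ell\) that touches \(\widetilde\ell\) need not pass through \(q\): it can reach \(\widetilde\ell\) through the slow background on one side of \(\ell\) and return the same way, and the chord comparison cannot exclude such detours, since when \(\widetilde\ell\) is fast and crosses \(\ell\) at a sufficiently shallow angle a detour of this kind is genuinely faster than the chord along \(\ell\). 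So self-avoidance does not prevent infinitely many such excursions accumulating at \(q\), and excluding that again requires a quantitative comparison which the proposal lacks. Note that the paper obtains finiteness by an entirely different and softer route, which you could adopt at no cost: order the lines meeting \(\xi\) by decreasing speed; the geodesic property forces the intersection with the fastest such line to be a single interval, and inductively the intersection with the \(k\)-th fastest is a union of at most \(2^{k-1}\) intervals. This removes the accumulation analysis altogether and isolates the real difficulty, which is the proper-encounter claim treated above.
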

\begin{proof}
First note that with probability \(1\) there are no triple intersections of lines \(\ell_1\), \(\ell_2\), \(\ell_3\) from \(\Pi\).
Given this, the remainder of the argument is purely geometric, and therefore applies to all \(\Pi\)-geodesics simultaneously.

Consider the set of all lines \(\ell\) in \(\Pi\) intersecting \(\xi\). As noted in Remark \ref{rem:geodesic-simplification},
the \(\Pi\)-geodesic property implies that
the intersection of \(\xi\) with the fastest such line must be a single (possibly trivial) interval. 
This is because the fastest route between first and last intersection with this fastest line must lie along the line.
For the second fastest line, the intersection must be formed as the union of at most two
intervals, which must be encountered respectively before and after the encounter with the fastest line. Continuing this argument, the 
intersection of \(\xi\) with the \(k^\text{th}\) fastest line must be the union of at most \(2^{k-1}\) intervals.
Thus for \emph{any} line \(\ell\) from \(\Pi\),
if \(\xi\) intersects \(\ell\) at all then the intersection set must be the union of a finite number of intervals, some of which may be trivial.

For a given \(\Pi\)-geodesic \(\xi\), fix a given line \(\ell_1\) from \(\Pi\) which intersects \(\xi\), and consider the start \(\xi(t)\) of a
non-singleton intersection interval \([\xi(t),\xi(s)]\) between \(\xi\) and \(\ell_1\). 
(Reversing time, the following argument will apply to the departure point \(\xi(s)\) as well as to the arrival point \(\xi(t)\).)
For sufficiently small \(\varepsilon\), either \(\ell_1\) will be the fastest line in \(\ball(\xi(t),\varepsilon)\), or it will be the second fastest, and the fastest line \(\ell_2\)
will intersect \(\ell_1\) at \(\xi(t)\). Choose \(u<t\) to be as small as possible subject to the requirement that \(\xi|_{(u,t]}\subset \ball(\xi(t),\eps)\). It follows from the \(\Pi\)-geodesic property that if \(\xi|_{(u,t)}\) hits \(\ell_2\) at time \(v\in(u,t)\) (assuming \(\ell_2\) exists) then \(\xi|_{[v,t]}\) must run along \(\ell_2\), so that \(\xi\) makes a proper encounter with \(\ell_1\) using the line \(\ell_2\). On the other 
hand, if \(\xi|_{(u,t)}\) does not hit \(\ell_2\) then it cannot hit \(\ell_1\), since if it did so at time \(v\in(u,t)\) then \(\xi|_{[v,t]}\) would have to run along \(\ell_1\), contradicting the maximality of \([t,s]\). Thus we may restrict attention to the case when \(\xi|_{(u,t)}\) hits neither \(\ell_1\) nor \(\ell_2\). This and following features of the construction are illustrated in Figure \ref{fig:encounter}.
\begin{Figure}
  \includegraphics[width=2.5in]{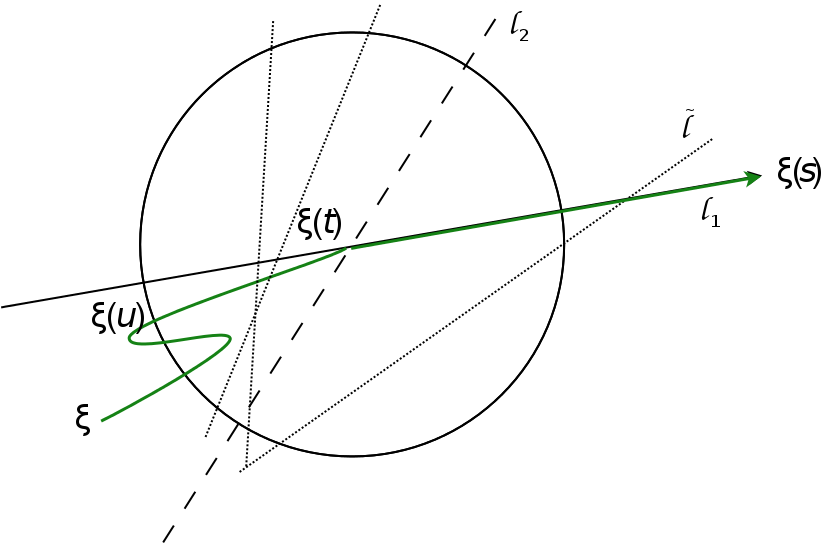}
\centering
\caption{\label{fig:encounter}
Illustration of the construction used in the proof of Theorem \ref{thm:encounters} to demonstrate that a geodesic \(\xi\) must make a proper encounter on
a line \(\ell_1\in\Pi\). Here \(\ell_2\) is a possible faster line; by making the enclosing ball small
enough we can exclude the possibility that such a faster \(\ell_2\) hits \(\ell_1\) in any place other than the
point \(\xi(t)\) where \(\xi\) hits \(\ell_1\) for the first time. Dotted lines are lines of low cost, where the notion of cost is described in the
construction.}
\end{Figure}


%

We introduce the notion of \emph{cost}, based on comparison between the motion defined by \(\xi\) over \((u,t)\) (say) 
and the motion defined by a comparison particle \(\widetilde{\xi}\), which begins at time \(u\) at the location which is the 
projection of \(\xi(u)\) on \(\ell_1\), and which continues at maximum speed (\(w\), say) along \(\ell_1\) in the direction from \(\xi(t)\) to \(\xi(s)\). 
We compute the cost of following \(\xi\) rather than \(\widetilde{\xi}\) 
in terms of the time by which \(\widetilde{\xi}\) leads \(\xi\) when \(\xi\) hits \(\ell_1\) (namely, at time \(t\)). This is given by the integral
\[
 \frac{1}{w}\int_{u}^t(w - |\xi'(a)|\cos\theta(a))\d{a}\quad=\quad \frac{1}{w}\int_{u}^t(w - v(a)\cos\theta(a))\d{a}\,,
\]
where \(\theta(a)\) is the angle that \(\xi'(a)\) makes with \(\ell\),
and setting
\(v(a)=|\xi'(a)|\). (Note that \(v(a)=|\xi'(a)|=V(\xi(a))\), because \(\xi\) is a \(\Pi\)-geodesic). 

Let \(H\) be the perpendicular distance between \(\xi(u)\) and \(\ell_1\). We re-parametrize in terms of the perpendicular distance between \(\xi(a)\) and \(\ell_1\),
removing those parts of the integral for which \(\xi'(a)\) is not directed towards \(\ell_1\) (in which case the contribution to the integral is certainly positive, since \(\ell_1\) is faster than any other line
used by \(\xi\) over \((u,t)\)).
Setting 
\[
\overline{a}(h)
\quad=\quad
\inf\left\{a:\text{ perpendicular distance of }\xi(a)\text{ from }\ell_1\text{ is }h\right\}\,,
\]
and \(\overline{v}(h)=v(\overline{a}(h))\),
and \(\overline{\theta}(h)=\theta(\overline{a}(h))\), we find
\begin{multline*}
 \frac{1}{w}\int_{u}^t(w - |\xi'(a)|\cos\theta(a))\d{a}\quad\geq\quad \frac{1}{w}\int_{0}^H(w - \overline{v}(h)\cos\overline{\theta}(h))\frac{\d{h}}{\overline{v}(h)\sin\overline{\theta}(h)}\\
\quad=\quad
\int_{0}^H\left(
\frac{\csc\overline{\theta}}{\overline{v}}
 - \frac{\cot\overline{\theta}}{w}
\right)\d{h}
\,.
\end{multline*}
Accordingly, define the relative \emph{cost index} of a given line \(\ell\) from \(\Pi\) 
(compared with \(\ell_1\)) by
\begin{equation}\label{eqn:cost-index}
 c(\ell)\quad=\quad \frac{\csc{\theta}}{{v}}
 - \frac{\cot{\theta}}{w}\,,
\end{equation}
where \(v\) is the speed-limit of \(\ell\), and \(\theta\) is the angle it makes with \(\ell_1\). Evidently 
the time by which \(\widetilde{\xi}\) leads \(\xi\) when \(\xi\) hits \(\ell_1\) can be controlled
in terms of an integral of cost indices of lines along which \(\xi\) travels when directed towards \(\ell_1\).
The cost index of \(\ell_1\) is not defined, though a limiting argument gives the value \(0\). Note too that, for any line \(\ell\) of speed \(v\), 
the cost index of \(\ell\) turns out to be positive if \(v< w\).

Consider line-space parametrized using \(\xi(t)\) and \(\ell_1\) as reference point and reference line,
restricting attention to lines with speed-limit less than \(w\) (the speed-limit for \(\ell_1\)). The intensity measure \(\tfrac{\gamma-1}{2} \,v^{-\gamma}\d{v}\,\d{r}\, \d{\theta}\)
may be re-expressed in terms of \(c\), \(r\), and \(\theta\): since
\[
 \frac{\d{c}}{\d{v}}\quad=\quad - \frac{\csc\theta}{v^2}\,,
\]
it follows that in the new coordinates the intensity measure is 
\begin{equation}\label{eq:cost-measure}
 \frac{\gamma-1}{2} \,\sin\theta\left(c \sin\theta + \frac{\cos\theta}{w}\right)^{\gamma-2} \,\d{c}\,\d{r}\,\d{\theta}\,.
\end{equation}
Now \(v^{-1}=c \sin\theta +\tfrac{\cos\theta}{w}>0\), so the measure determined by \eqref{eq:cost-measure}
is non-negative.
Consider the line pattern of lines with cost smaller than a specified threshold \(c_0\). 
From the form of \eqref{eq:cost-measure}, this pattern is locally finite. On the other hand, the line pattern of lines
with cost exceeding a specified threshold must be locally dense even when constrained by 
requiring
angle \(\theta\) to lie within a small interval.

We pick \(\widetilde\ell\) to be the lowest cost line separating \(\xi(s)\) from \(\xi_{(u,t)}\)
(see Figure \ref{fig:encounter} for a possible configuration, notice that this line may or may not be \(\ell_2\)),
and we
determine the minimal \(v\in[u,t]\) such that all the lines involved in \(\xi|_{(v,t)}\)
are more expensive than \(\widetilde\ell\).

If \(v=t\) then consider the \(\liminf\) as \(\delta\to0\) of the costs of lines involved in \(\xi|_{(t-\delta,t)}\).
This must be finite, for otherwise a low-cost line would produce a path faster than the \(\Pi\)-geodesic. 
Since there are only finitely many low-cost lines near \(\xi(t)\),
this means there must be at least one low-cost line which is repeatedly visited by \(\xi\)
in every interval \((t-\delta,t)\); so this line must pass through \(\xi(t)\) and therefore must either be \(\ell_1\)
(which we have excluded) or \(\ell_2\) (which is a case already disposed of). Hence we may suppose 
\(v<t\).

If \(v<t\), pick the line \(\ell^*\) of least cost which is hit by \(\xi|_{(v,t)}\). 
Suppose (a) this hits the component of \(\ell_1\setminus\xi(s)\) not containing \(\xi(t)\). Then a combination of \(\ell^*\) and \(\widetilde\ell\) and \(\ell_1\) provides a faster way to get to \(\xi(s)\) than is provided by \(\xi\), again violating the \(\Pi\)-geodesic property of \(\xi\).
Otherwise (b) this line \(\ell^*\) does not hit the component of \(\ell_1\setminus\xi(s)\) not containing \(\xi(t)\). 
If \(\xi\) does not meet \(\ell_1\) at \(\xi(t)\) using \(\ell^*\), then \(\ell^*\) 
followed by \(\ell_1\) provides a faster way to get to \(\xi(s)\) than is provided by \(\xi\), violating the \(\Pi\)-geodesic property of \(\xi\). 

It follows that if \(\xi\) does not meet \(\ell_1\) at \(\xi(t)\) using \(\ell_2\) then 
it must meet \(\ell_1\) at \(\xi(t)\) using \(\ell^*\), proving the result.

%

As noted above, a time-reversal argument deals with the departure time \(s\). Consequently all the countably many non-singleton intersections of \(\xi\) with lines of positive speed-limit must be proper.
\end{proof}

Note in passing that in higher dimension \(d>2\) the quantity analogous to the cost \eqref{eqn:cost-index} varies along each line.

We can now prove almost sure uniqueness of \(\Pi\)-geodesics between specified pairs of points in the planar case.

\begin{thm}\label{thm:uniqueness}
 Suppose \(\gamma>d=2\). Consider two points \(x\) and \(y\) in the plane \(\Reals^2\). Almost surely there is just one \(\Pi\)-geodesic connecting
\(x\) and \(y\).
\end{thm}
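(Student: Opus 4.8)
The plan is to fix the pair \(x,y\) once and for all and to show that the event ``there exist two distinct \(\Pi\)-geodesics from \(x\) to \(y\)'' is null, crucially exploiting that \(x\) and \(y\) do \emph{not} depend on \(\Pi\). First I would reduce to a bigon. Suppose \(\xi_1\neq\xi_2\) are both \(\Pi\)-geodesics from \(x\) to \(y\); let \(p\) be the last point at which they agree before first separating, and \(q\) the first point at which they agree afterwards, so that the two sub-arcs \(\alpha_1\subset\xi_1\) and \(\alpha_2\subset\xi_2\) run from \(p\) to \(q\), meet only at their endpoints, and (each being a restriction of a minimum-time path) have \emph{identical} duration \(\tau\), the geodesic time from \(p\) to \(q\). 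I would take \(p,q\) to be points of positive speed-limit whenever possible, deferring to a separate treatment the degenerate maneuvering near the fixed endpoints, where the geodesic passes through infinitely many lines of vanishing speed.

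Next I would use Theorem \ref{thm:encounters} to expose the combinatorial skeleton of the bigon. Along each arc the non-singleton intersections with lines of \(\Pi\) form, for every fixed line, a finite union of intervals delimited by proper encounters; thus each arc traverses an (at most countable) family of line segments drawn from \(\Pi\), joined at intersection points of pairs of lines. The key structural observation is that \(\alpha_1\) and \(\alpha_2\) are internally disjoint and distinct, so there is at least one line \(\ell_0\in\Pi\) which they traverse for \emph{different} total lengths \(L_1\neq L_2\) (in particular, any line used by exactly one arc qualifies, with the other length zero). I would then organize the argument as a countable union over the choice of this asymmetric line \(\ell_0\in\Pi\), together with the rational data needed to pin down the relevant segment lengths, so that it suffices to show each summand has probability zero.

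The heart of the proof is a coincidence-is-null argument driven by point-line duality. Conditionally on \emph{all} of \(\Pi\) except the single marked line \((\ell_0,v_0)\), and conditionally on the position of \(\ell_0\), its speed has an absolutely continuous (Pareto) law; dually, holding its angle fixed, the signed distance \(r(\ell_0)\) is distributed according to the duality parametrization with its Lebesgue factor \(\d r\), hence is also absolutely continuous. The arc-durations \(T_1\) and \(T_2\), regarded now as functions of the line-parameters rather than as the common value \(\tau\), are real-analytic in those parameters; by the asymmetry \(L_1\neq L_2\) their difference \(T_1-T_2\) is strictly monotone in \(v(\ell_0)\) with the remaining configuration frozen. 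Should some exceptional balanced case render it independent of every speed, I would instead perturb a position coordinate \(r(\ell_0)\), which moves the relevant vertices and thereby changes segment lengths non-trivially. In either case \(\{T_1=T_2\}\) is the zero-set of a non-constant analytic function of an absolutely continuous parameter, so the corresponding conditional probability is zero; integrating out and summing the countable union yields \(\Prob{\text{two distinct }\Pi\text{-geodesics}}=0\).

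The main obstacle is establishing this non-degeneracy: that \(T_1-T_2\) is genuinely non-constant in some \emph{single} available line-parameter for \emph{fixed} endpoints. This is exactly where the distinction from Remark \ref{rem:non-uniqueness} is decisive: in the equilateral-triangle construction one slides the reference point and invokes an intermediate-value argument to \emph{force} the two half-perimeter times to coincide, but that tie lives at an endpoint chosen as a function of \(\Pi\); with \(x,y\) frozen no such freedom remains, and the tie survives only on a null set. A secondary difficulty is the endpoint maneuvering: because \(V(x)=V(y)=0\) almost surely, the arcs accumulate infinitely many vanishing-speed segments near \(x\) and \(y\), so some care is needed either to locate \(p,q\) at positive-speed encounters or to absorb this behaviour into the countable bookkeeping without disturbing the conditional absolute continuity supplied by duality.
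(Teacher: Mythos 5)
Your core mechanism is the same one the paper uses: single out one line \(\ell_0\), write the duration of a competing path as (time off \(\ell_0\)) \(+\) (length on \(\ell_0\))\(/V\), and use Slivnyak independence plus the absolutely continuous Pareto law of \(V\) to make exact ties null. But there are two genuine gaps in your execution. First, the countability reduction---the step that carries all the weight---is not achieved by your bookkeeping. Freezing a fixed pair of arcs and varying \(v(\ell_0)\) does show that each fixed pair ties for at most one speed; but the event ``two distinct \(\Pi\)-geodesics exist'' is a union over all candidate arc-pairs, and this family is uncountable: Theorem \ref{thm:encounters} deliberately falls short of showing that planar \(\Pi\)-geodesics are finite concatenations of segments (near \(x\) and \(y\) they pass through infinitely many arbitrarily slow lines), so an arc's ``combinatorial skeleton'' is countably infinite data, and your ``rational data needed to pin down the relevant segment lengths'' cannot capture an exact equality \(T_1=T_2\) between continuous quantities. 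A union of uncountably many null events proves nothing. The paper's device, which is the idea missing from your plan, is to discretize \emph{only} the interaction with \(\ell_0\): it defines classes \(\mathcal{P}^{\ell_0}_{u}(x,y;\sigma)\) indexed by finite sequences \(\sigma\) of integer pairs recording which inter-intersection intervals of \(\ell_0\) are traversed, in which order and direction, and it collapses everything off \(\ell_0\) by demanding that excursions be minimum-time; then every geodesic lies in some class, all paths of a class share the common duration \(S_\sigma+L_\sigma/V\) with \((S_\sigma,L_\sigma)\) independent of \(V\), and only a countable union over pairs \(\sigma\neq\widetilde{\sigma}\) remains. Relatedly, selecting \(\ell_0\) as ``the asymmetric line'' of a geodesic pair and then conditioning on the rest of \(\Pi\) is circular as stated (the selection uses the very mark you then resample); the paper instead picks \(\ell_0\) uniformly at random among the finitely many lines of \(\Pi_{v_0}\) hitting \(\ball(\origin,R)\)---a choice exogenous to the geodesic structure---and applies Slivnyak's theorem to that.

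Second, your treatment of the balanced case would fail. When the competitors use \(\ell_0\) with equal total length (your \(L_1=L_2\); in the paper's setting \(\sigma\neq\widetilde{\sigma}\) but potentially \(L_\sigma=L_{\widetilde{\sigma}}\)), you propose to perturb \(r(\ell_0)\) and invoke real-analyticity. This does not work: as the position of \(\ell_0\) moves, the incidence structure and hence the competing arcs themselves change, so \(T_1-T_2\) is not a well-defined (let alone non-constant analytic) function of \(r(\ell_0)\) ``with the remaining configuration frozen''. Moreover the paper's closing remark warns against exactly this style of argument: the Poisson line tessellation is rigid, segment lengths being determined by the incidence geometry, so lengths cannot be treated as freely perturbable coordinates. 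What the paper actually uses is distributional: conditional on the configuration of \(\ell_0\) and \(\Pi_u\), the spacings between consecutive intersection points along \(\ell_0\) are independent Gamma variables, so two distinct finite interval-sets have equal total length with probability zero, i.e.\ \(\Prob{L_\sigma=L_{\widetilde{\sigma}}}=0\); this, not analyticity, eliminates the balanced case. It also renders unnecessary your unproved claim that some asymmetric line must exist---the paper compares interval sets on a single exogenously chosen line rather than per-line total lengths, so no such existence statement is needed.
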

\begin{proof}
First, note the following consequence of Theorem \ref{thm:a-priori-bound}: as \(R\to\infty\), so 
\[
 \Prob{\text{ all geodesics from \(x\) to \(y\) are contained in }\ball(\origin,R) }\quad\to\quad 1\,.
\]

Furthermore, Theorem \ref{thm:encounters} implies that the following assertion holds almost surely: 
all \(\Pi\)-geodesics join or leave any line \(\ell\) in \(\Pi\) at only countably many possible places, namely the intersection
points of \(\ell\) with the rest of \(\Pi\). Moreover any particular \(\Pi\)-geodesic joins or leaves any particular line at only finitely many of these places. 

Finally, note that if two \(\Pi\)-geodesics from \(x\) to \(y\) intersect at \(z\) then they must do so at the same relative time as measured from \(x\).

Bearing these facts in mind, we now develop the proof.

For fixed \(v>0\), pick a line \(\ell_0\) uniformly at random from \(\Pi_{v_0}\) such that \(\ell_0\hits\ball(\origin,R)\). 
Note that the speed \(V\) of \(\ell_0\) has a Pareto distribution, with density \((\gamma-1) (v/v_0)^{-\gamma}\) for \(v>v_0\).
Note too that \(V\) is independent of the physical location of \(\ell_0\) and (by Slivnyak's theorem) is independent of
\(\Pi\setminus\{\ell_0\}\) which itself is a statistical copy of \(\Pi\).
Then (almost surely) for all sufficiently large \(R>0\) we know all \(\Pi\)-geodesics from \(x\) to \(y\) belong to the path-set
\(\mathcal{P}^{\ell_0}(x,y)=\bigcup_{u>0}\mathcal{P}^{\ell_0}_{u}(x,y)\), where \(\mathcal{P}^{\ell_0}_{u}(x,y)\) is the set of \(\Pi\)-paths from \(x\) to \(y\) lying in \(\ball(\origin,R)\) for which:
\begin{itemize}
 \item the \(\Pi\)-path is always run at maximal permissible speed;
 \item excursions of the \(\Pi\)-path away from \(\ell_0\) are \(\Pi\)-geodesics;
 \item the intersections of the \(\Pi\)-path with \(\ell_0\) form a finite disjoint union of intervals \([a,b]\);
 \item and from these intervals the non-singleton intervals are delineated by proper encounters of the \(\Pi\)-path and \(\ell_0\),
 moreover these end-points lie in \(\ell_0\cap\bigcup\{\ell:\ell\in\Pi_u\}\).
\end{itemize}

We further decompose \( \mathcal{P}^{\ell_0}_{u}(x,y)=\bigcup_\sigma \mathcal{P}^{\ell_0}_{u}(x,y;\sigma)\), where \(\sigma\) ranges over the family of finite sequences of pairs of (signed) integers, such that the closed intervals delineated by the pairs of integers are disjoint. To define \(\mathcal{P}^{\ell_0}_{u}(x,y;\sigma)\), consider the doubly-infinite point sequence \(\ell\cap\bigcup\{\ell:\ell\in\Pi_u\}\), and index the points by \(\mathbb{Z}\) once and for all, using a fixed sense of direction along \(\ell_0\) and arranging for the interval determined by the points indexed by \(0\) and \(1\) to be the (almost surely unique) interval nearest to \(\origin\). Then \(\mathcal{P}^{\ell_0}_{u}(x,y;\sigma)\)
is composed of those \(\Pi\)-paths in \(\mathcal{P}_{u}(x,y)\) for which the union of disjoint non-singleton intervals of intersection with \(\ell_0\) equals the union of the intervals bounded by pairs of points indexed by the pairs of \(\sigma\), moreover \(\sigma\) lists these intervals in the order in which they are visited and according to the direction in which they are travelled. 

It is a consequence of the defining properties of \(\mathcal{P}^{\ell_0}(x,y)\) \emph{etc}, and the property that intersecting 
\(\Pi\)-geodesics from \(x\) to \(y\) must visit their intersections at the same relative times, that all \(\Pi\)-paths in \(\mathcal{P}^{\ell_0}_{u}(x,y;\sigma)\)
spend the same amount of time \(S_\sigma\) outside \(\ell_0\). 
Moreover, consider the lengths \(L_{\sigma_{1}}\), \(L_{\sigma_{2}}\), \ldots, \(L_{\sigma_{k}}\) corresponding to the intervals bounded by pairs of points indexed by the elements of \(\sigma=(\sigma_1, \sigma_2, \ldots, \sigma_k)\). These are 
sums of independent Gamma random variables of the same rate, and all \(\Pi\)-paths in \(\mathcal{P}^{\ell_0}_{u}(x,y;\sigma)\)
spend the same amount of time \(L_\sigma/V=(L_{\sigma_{1}}+L_{\sigma_{2}}+\ldots+L_{\sigma_{k}})/V\) on \(\ell_0\).
Moreover the \(S_\sigma\) and the \(L_{\sigma_i}\) random variables are statistically independent of the speed \(V\) of \(\ell_0\).

If \(\sigma\neq\widetilde{\sigma}\) then the sums \(L_\sigma=L_{\sigma_{1}}+L_{\sigma_{2}}+\ldots+L_{\sigma_{k}}\), \(L_{\widetilde{\sigma}}=L_{\widetilde{\sigma}_{1}}+L_{\widetilde{\sigma}_{2}}+\ldots+L_{\widetilde{\sigma}_{\widetilde{k}}}\)  can be decomposed into summands over shared or distinct Gamma random variables to reveal that \(L_\sigma-L_{\widetilde{\sigma}}\) has a non-degenerate probability density whenever \(\sigma\neq\widetilde{\sigma}\), and therefore \(\Prob{L_\sigma=L_{\widetilde{\sigma}}}=0\). Hence \(\Pi\)-paths from \(\mathcal{P}^{\ell_0}_{u}(x,y;\sigma)\) have a common duration of \(S_\sigma+L_\sigma/V\), and 
\(\Pi\)-paths from \(\mathcal{P}^{\ell_0}_{u}(x,y;\widetilde{\sigma})\) have a common duration of \(S_{\widetilde{\sigma}}+L_{\widetilde{\sigma}}/V\), and the Pareto distribution and independence of \(V\) implies that
\[
\Prob{S_\sigma+L_\sigma/V=S_{\widetilde{\sigma}}+L_{\widetilde{\sigma}}/V}=0 \text{ if }\sigma\neq\widetilde{\sigma}\,. 
\]
Thus almost surely, for all the countably many different \(\sigma\neq\widetilde{\sigma}\), the common durations of \(\Pi\)-paths from \(\mathcal{P}^{\ell_0}_{u}(x,y;\sigma)\) and \(\mathcal{P}^{\ell_0}_{u}(x,y;\widetilde{\sigma})\) are different.

It follows that almost surely all the \(\Pi\)-geodesics between \(x\) and \(y\) must traverse the same set of non-singleton intervals in the same direction along \(\ell_0\), since any two such \(\Pi\)-geodesics will have to belong to the same \(\mathcal{P}^{\ell_0}_{u}(x,y;\sigma)\) for sufficiently small \(u>0\). But this must then hold for all \(\ell\in\Pi\), and therefore (since \(\Pi\)-geodesics  
must intersect at the same time as measured from \(x\)) almost surely all \(\Pi\)-geodesics between \(x\) and \(y\) must agree.
\end{proof}

The argument here is delicate: for example it is \emph{not} the case that the set of lengths along lines between intersections is linearly independent 
if we consider the ensemble of lengths
of a unit Poisson line process. Indeed the tessellation produced by a Poisson line process will be rigid; consideration of various triangles shows that 
the length of any single segment will be determined
by the lengths of all the other segments, so long as the incidence geometry of the segments is known.

The almost-sure uniqueness of planar \(\Pi\)-geodesics implies that planar
spatial networks formed from the Poisson line process model satisfy
property \ref{def:SIRSN-item-route} of Definition \ref{def:SIRSN}.

\section[\texorpdfstring{$\Pi$}{Π}-geodesics: finite mean-length in d=2]{\(\Pi\)-geodesics: finiteness of mean-length in dimension \(2\)}\label{sec:pi-geodesics-finite-mean-length}
One might conceive that a \(\Pi\)-geodesic between two fixed points might be of finite length but not of finite mean length.
However this is not the case, at least in dimension \(d=2\). We begin to show this by first establishing the finite mean length of constrained \(\Pi\)-geodesics, {restricted} to lie within specified
(two-dimensional) balls.
\begin{lemma}\label{lem:finite-mean-length-in-ball}
 Suppose \(d=2\) and \(\gamma>2\). Consider \(x_1\), \(x_2\in\Reals^2\), fix \(r_0>|x_2-x_1|\), and consider
the least time by which it is possible to connect \(x_1\) to \(x_2\) by a \(\Pi\)-path which remains entirely within \(\ball(\tfrac{x_1+x_2}{2},r_0)\):
\begin{multline*}
 T_{r_0}^* \quad=\quad \inf\Big\{T \;:\; \text{ there is }\xi\in\Paths_T \text{ such that } \\
 \xi(0)=x_1, \; \xi(T)=x_2, \; \text{ and }\xi(t)\in\ball(\tfrac{x_1+x_2}{2},r_0)\text{ for all }t\in[0,T] \Big\}\,.
\end{multline*}
Then \(\Expect{T_{r_0}^*\;|\: V_0}<\infty\), where \(V_0\) is the speed-limit of the fastest line hitting \(\ball(\tfrac{x_1+x_2}{2},r_0)\);
moreover the following finite expectation provides an upper bound on the mean length of a \(\Pi\)-path connecting \(x_1\) to \(x_2\) within \(\ball(\tfrac{x_1+x_2}{2},r_0)\):
\[
 \Expect{V_0 T_{r_0}^*}\quad<\quad \infty\,.
\]
\end{lemma}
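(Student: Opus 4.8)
The plan is to reduce the mean-length claim to $\Expect{V_0 T^*_{r_0}}<\infty$ and then to control this product by combining the explicit planar recursive construction of Theorem~\ref{thm:connection} with a H\"older decoupling of $V_0$ from the individual segment speeds. First I would record the reduction. A constrained minimum-time path $\xi^*$ exists by the ball-restricted analogue of Corollary~\ref{cor:compactness} (the condition that $\xi(t)$ remain in $\ball(\tfrac{x_1+x_2}{2},r_0)$ for all $t$ is closed under uniform, hence weak, limits). Since $\xi^*$ never leaves the ball, $|\xi^{*\prime}(t)|\le V(\xi^*(t))\le V_0$ for almost all $t$, so its length satisfies $\int_0^{T^*_{r_0}}|\xi^{*\prime}|\,\d t\le V_0 T^*_{r_0}$. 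Thus it suffices to prove $\Expect{V_0 T^*_{r_0}}<\infty$; the conditional assertion $\Expect{T^*_{r_0}\mid V_0}<\infty$ is then immediate, since $V_0\in(0,\infty)$ almost surely and $\Expect{V_0 T^*_{r_0}\mid V_0}=V_0\,\Expect{T^*_{r_0}\mid V_0}$.

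Next I would bound $T^*_{r_0}$ above by the duration $T_{\mathrm{tree}}=\sum_h \mathrm{length}_h/\mathrm{speed}_h$ of the binary-tree $\Pi$-path of Theorem~\ref{thm:connection}, choosing the scale factor $\alpha$ large enough to satisfy both the convergence threshold $\alpha>2^{(\gamma-1)/(\gamma-2)}$ and a containment requirement. Containment is automatic once the excursions are small: every tree segment is a chord joining two points of the convex ball, so the whole path stays inside $\ball(\tfrac{x_1+x_2}{2},r_0)$ provided $\alpha$ is large (the lateral spread is $O(r_0/\alpha)$, and $|x_1-x_2|/2<r_0$ leaves room). This gives $T^*_{r_0}\le T_{\mathrm{tree}}$, with $\mathrm{length}_h\le(1+2/\alpha)r_h$ and $r_h\le\alpha^{-n}r_0$ for a node $h$ at level $n$.

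The heart of the argument is then to estimate $\Expect{V_0 T_{\mathrm{tree}}}=\sum_h\Expect{V_0\,\mathrm{length}_h/\mathrm{speed}_h}$ term by term. Here $V_0$ and $\mathrm{speed}_h$ are correlated (indeed the root speed may equal $V_0$), so I would decouple them by H\"older: $\Expect{V_0\,\mathrm{time}_h}\le\|V_0\|_p\,\|\mathrm{time}_h\|_q$ with $1/p+1/q=1$. From $\Prob{V_0\le v}=\exp(-\pi r_0 v^{-(\gamma-1)})$ one reads off the Fr\'echet tail $\Prob{V_0>v}\sim \pi r_0 v^{-(\gamma-1)}$, so $\|V_0\|_p<\infty$ precisely for $p<\gamma-1$; since $\gamma>2$ we may fix some $p\in(1,\gamma-1)$. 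For the time factor, the line-measure lower bound from the proof of Theorem~\ref{thm:connection} gives, conditionally on the tree geometry, $\Prob{\mathrm{speed}_h\le v}\le\exp(-c\,r_h v^{-(\gamma-1)})$, whence $\Expect{\mathrm{speed}_h^{-q}\mid r_h}\le C_q r_h^{-q/(\gamma-1)}$ and so, using $r_h\le\alpha^{-n}r_0$, $\|\mathrm{time}_h\|_q\le C(\alpha^{-n}r_0)^{(\gamma-2)/(\gamma-1)}$. Summing over the at most $2^n$ nodes at level $n$ yields a geometric series $\sum_n\big(2\,\alpha^{-(\gamma-2)/(\gamma-1)}\big)^n$, finite exactly when $\alpha>2^{(\gamma-1)/(\gamma-2)}$.

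The main obstacle is precisely this correlation between the global maximal speed $V_0$ and the segment speeds. The H\"older split resolves it, and it is available \emph{only} because $\gamma>2$ forces $V_0$ to have a finite $p$-th moment for some $p>1$; crucially, the exponent $(\gamma-2)/(\gamma-1)$ in the bound on $\|\mathrm{time}_h\|_q$ is independent of $q$, so the geometric summation converges under the same threshold on $\alpha$ already imposed in Theorem~\ref{thm:connection}. Secondary points needing care are the ball-constrained compactness used to produce $\xi^*$, the containment of the tree path, and the conditioning that legitimises the speed-tail estimate (the fastest connecting line at each node is measurable with respect to the lines hitting its two sub-balls, given their positions).
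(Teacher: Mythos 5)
Your proposal is correct in outline but takes a genuinely different route from the paper. The paper does not reuse the binary tree of Theorem~\ref{thm:connection} at all: it builds a new, purely sequential planar construction in which two path-halves grow from a middle segment towards \(x_1\) and \(x_2\), and — this is the key device — each successive line is required to be \emph{strictly slower} than \(V_0\) and than all previously chosen lines. Because the search at step \(n\) then takes place in a region of marked line-space (meta-slowness above \(S_{n-1}\)) disjoint from everything previously examined, the meta-slowness increments are \emph{exactly} independent exponentials, independent of \(S_0=V_0^{-(\gamma-1)}\); the conditional bound \(\Expect{T^*_{r_0}\mid V_0}<\infty\) then follows from conditional Jensen (concavity of \(u\mapsto u^{1/(\gamma-1)}\), using \(\gamma>2\)), and the length bound follows by multiplying by \(S_0^{-1/(\gamma-1)}\) and splitting on \(S_0\gtrless1\). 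You instead recycle the tree of Theorem~\ref{thm:connection} and resolve the correlation between \(V_0\) and the segment speeds by H\"older, exploiting \(\|V_0\|_p<\infty\) for \(p<\gamma-1\) (correct: \(V_0\) has the stated Fr\'echet-type law) together with the observation that the exponent \((\gamma-2)/(\gamma-1)\) in \(\|\mathrm{time}_h\|_q\) does not depend on \(q\), so the geometric summation converges under the same threshold \(\alpha>2^{(\gamma-1)/(\gamma-2)}\). This is an attractive alternative: it avoids designing a bespoke construction and yields unconditional moment bounds directly, whereas the paper's argument buys exact independence and explicit formulae.

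The one step you must repair is the conditional speed-tail estimate \(\Prob{\mathrm{speed}_h\le v\mid\text{tree geometry}}\le\exp(-c\,r_h v^{-(\gamma-1)})\). Your parenthetical justification (measurability of the fastest connecting line with respect to the lines hitting the two sub-balls) misses the real difficulty: the geometry of node \(h\) is determined by its ancestors' choices, and conditioning on ``the fastest line hitting the ancestor's two balls was \(\ell_g\) with speed \(V_g\)'' conditions on void events of \(\Pi\). Void events of a Poisson process are positively correlated, so such conditioning can only \emph{increase} the void probability you need to bound above; the unconditional estimate does not pass to the conditional one for free. (The paper's Theorem~\ref{thm:connection} glosses over exactly this point — ``omitting some implicit conditioning on \(r_h\)'' — and the paper's proof of Lemma~\ref{lem:finite-mean-length-in-ball} is engineered precisely so that the issue never arises.) Two fixes are available. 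Either note that each excluded ancestor region involves only marks above \(V_g\), so its overlap with the node-\(h\) search region has intensity at most a vanishing fraction of \(\mu\)-measure \(\cdot\, v^{-(\gamma-1)}\) in the small-\(v\) regime that dominates \(\Expect{\mathrm{speed}_h^{-q}}\), and treat the large-\(v\) range by a crude bound; or, more cleanly, modify the tree so that each node's line is required to be strictly slower than its parent's (and the root's slower than \(V_0\)), which restores exact independence of the exponential meta-slowness increments — the paper's own device transplanted to your tree — at no cost to the quantitative conclusions.
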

\begin{proof}
Because we work only in dimension \(2\),
and seek an upper bound on \(\Pi\)-geodesic length, we are able to concentrate on \(\Pi\)-paths defined by joining together sequences of line segments;
there is no need to
negotiate the complexities of the tree construction described in Theorems \ref{thm:connection} and \ref{thm:metric-space}.
The time taken by such a \(\Pi\)-path, constrained to lie within \(\ball(\tfrac{x_1+x_2}{2},r_0)\) and connecting \(x_1\) to \(x_2\), necessarily provides an upper bound 
on the \(\ball(\tfrac{x_1+x_2}{2},r_0)\)-constrained \(\Pi\)-geodesic connecting \(x_1\) to \(x_2\). Thus the finiteness of \(\Expect{V_0 T_{r_0}^*}\), together with the fact that \(V_0\) 
is the maximum speed attainable within \(\ball(\tfrac{x_1+x_2}{2},r_0)\), provides an upper bound on the mean length of the 
constrained \(\Pi\)-geodesic connecting \(x_1\) to \(x_2\) which is restricted to lie within \(\ball(\tfrac{x_1+x_2}{2},r_0)\).

Suppose that \(|x_2-x_1|=r_1<\half r_0\). 
Without loss of generality, we suppose that \(x_1+x_2=\origin=(0,0)\), and \(x_1=(-\half r_1,0)\), \(x_2=(\half r_1,0)\).
We shall join \(x_1\) and \(x_2\) together by working towards the two points by two paths commencing on the line segment \(\sigma_1=\{(0,h):0\leq h\leq r_1\}\);
we will then be able to join the two \(\Pi\)-paths together by prolonging the first line segment used in the construction of one of the \(\Pi\)-paths.

The constructions of the two \(\Pi\)-paths are entirely similar, so we shall focus on the \(\Pi\)-path leading to \(x_1\).

Suppose that the fastest line intersecting \(\ball(\origin,r_0)\) has speed-limit \(V_0\). Exploiting the notion of meta-slowness described above
in the proof of Theorem \ref{thm:a-priori-bound}, we know that
if \(S_0=V_0^{-(\gamma-1)}\) is the meta-slowness of this line then 
\begin{equation}\label{eq:fastest-line}
 S_0 \quad=\quad \frac{1}{\pi r_0} E_0 \qquad \text{ where \(E_0\) is distributed as }\text{Exponential}(1)\,.
\end{equation}
The following formulae are simplified if 
our \(\Pi\)-path constructions are required to avoid using this line. The first line used in the \(\Pi\)-path running to \(x_1\) will be the fastest line \(\ell_1\) with speed less than \(V_0\)
and intersecting both the line segment \(\sigma_1\) and the line segment from \(x_1\) to \(\tfrac{3}{2}x_1\). 
Suppose that the speed-limit of this line is \(V_1\), so the meta-slowness is \(S_1=V_1^{-(\gamma-1)}\).
We use standard integral geometry of lines and Pythagoras to show that the line measure of all such lines is
\[
 \half\left(|x_1-(0,r_1)|+|\tfrac{3}{2}x_1|-|\tfrac{3}{2}x_1-(0,r_1)|-|x_1\|\right)
\quad=\quad 
\frac{\sqrt{5}-2}{4} \; r_1\,.
\]
Consequently we may deduce
\[
  S_1 \quad=\quad S_0 + \frac{4}{(\sqrt{5}-2) r_1} E_1 \qquad \text{ where \(E_1\) is distributed as }\text{Exponential}(1)\,,
\]
and \(E_1\) is independent of \(S_0\) (equivalently \(V_0\)) and the geometry of the line \(\ell_1\).

\begin{Figure}
  \includegraphics[width=2in]{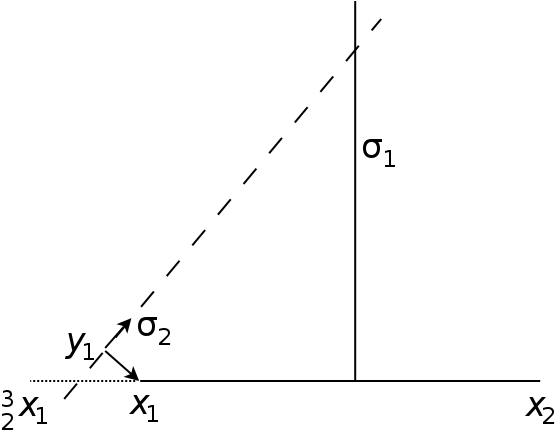}
\centering
\caption{\label{fig:mean-length}
Successive construction of one of two components of a \(\Pi\)-path from \(x_1\) to \(x_2\).
}
\end{Figure}
Let \(y_1\) be the point on \(\ell_1\) closest to \(x_1\), and note that the distance from \(y_1\) to \(\sigma_1\) along \(\ell_1\)
is bounded above by the distance from \(\tfrac{3}{2}x_1\) to \((0,r_1)\), namely
\[
 \sqrt{\frac{9}{16}r_1^2 + r_1^2} \quad=\quad \frac{5}{4}\, r_1\,.
\]
The construction is illustrated in Figure \ref{fig:mean-length}.

This construction is continued recursively, for example replacing the origin by the point \(y_1\) on \(\ell_1\) closest to \(x_1\),
\(r_0\) by \(r_1\),
and replacing the segment \(\sigma_1\) by a segment \(\sigma_2\) begun at \(y_1\), directed along \(\ell_1\) towards the start of the \(\Pi\)-path,
of length \(r_2=2|y_1-x_1|\). Simple geometric arguments show that both \(|y_1-x_1|\) and \(|y_1-\tfrac{3}{2}x_1|\) are bounded above by \(\tfrac{1}{4}r_1\), so 
the distance that \(\sigma_2\) extends 
from \(\tfrac32 x_1\) cannot exceed \(\tfrac{3}{4}r_1\), while the distance between \(\tfrac32 x_1\) and \(\sigma_1\) is \(\tfrac34 r_1\).
This construction can be used to generate a new line \(\ell_2\), of meta-slowness \(S_2=V_2^{-(\gamma-1)}\) 
which is required to be strictly greater than \(S_1\),
and a new closest distance \(r_2/2\) from \(x_1\) to \(\ell_2\). The calculations show that \(r_2\leq r_1/2\).

In general the \(n^\text{th}\) line \(\ell_n\) of the construction has meta-slowness \(S_n=V_n^{-(\gamma-1)}\)
with
\begin{equation}\label{eqn:recursion}
 S_n \quad=\quad S_{n-1} + \frac{4}{(\sqrt{5}-2) r_n} E_n \qquad \text{ where \(E_n\) is distributed as }\text{Exponential}(1)\,,
\end{equation}
and \(T_1\) is independent of \(S_0\), \ldots, \(S_{n-1}\) (equivalently \(V_0\), \ldots, \(V_{n-1}\)) and the geometry of the lines \(\ell_1\), \ldots, \(\ell_{n-1}\).
Here \(r_n\) is the closest distance from \(x_1\) to \(\ell_n\), and \(r_n<r_{n-1}/2\); the length of the new segment 
(running from \(\sigma_n\) to \(y_n\) along \(\ell_n\)) is bounded above by \(\tfrac{5}{4}r_n\).

Evidently we have constructed a \(\Pi\)-path from \(\sigma_1\) to \(x_1\), built as a sequence of line segments.
Total time of travel is bounded above by
\begin{multline*}\label{eqn:time-of-travel}
 \sum_{n=1}^\infty S_n^{\frac{1}{\gamma-1}}  \times \frac{5}{4} r_n
\quad=\quad
\frac{5}{4} 
\sum_{n=1}^\infty \left(
S_0+ \frac{4}{\sqrt{5}-2}\left(\frac{E_1}{r_1}+\ldots+ \frac{E_n}{r_n}\right)\right)^{\frac{1}{\gamma-1}}   r_n
\quad\leq\quad
\\
\frac{5r_0^{\frac{\gamma-2}{\gamma-1}}}{4} 
\sum_{n=1}^\infty \left(
2^{-(n-1)}r_0S_0+
\frac{4}{\sqrt{5}-2}\left(
 2^{-(n-1)}{E_1}+ 2^{-(n-2)}{E_2}+\ldots+ {E_n}\right)\right)^{\frac{1}{\gamma-1}}   
\left(2^{\frac{\gamma-2}{\gamma-1}}\right)^{-n}
\,,
\end{multline*}
where the second step uses \(r_n<r_{n-1}/2\) and \(r_1<r_0\). We can use the conditional Jensen's inequality for the concave function
\(u\mapsto u^{\frac{1}{\gamma-1}}\) (concave because \(\gamma>2\)) to deduce that the mean total time of travel, conditional on \(V_0\)
(equivalently \(S_0\)), is bounded above by
\begin{equation}\label{eqn:time-of-travel}
 \frac{5r_0^{\frac{\gamma-2}{\gamma-1}}}{4} 
\sum_{n=1}^\infty \left(
2^{-(n-1)}r_0S_0+
 \frac{8}{\sqrt{5}-2}\right)^{\frac{1}{\gamma-1}}   
\left(2^{\frac{\gamma-2}{\gamma-1}}\right)^{-n}\,.
\end{equation}
Comparison with a geometric sum shows that this sum is finite, since \(\gamma>2\).

We deduce the finiteness of the conditional mean time from \(x_1\) to \(x_2\), since the path can be completed by extending either \(\ell_1\) or its counterpart
in the \(x_2\) path construction; the extra length required is bounded above by \(r_1<r_0\), and the extra time required is therefore bounded above by \(V_0^{-1} r_0\).

Finally, finiteness of mean length follows by multiplying the conditional mean time by \(V_0=S_0^{-\tfrac{1}{\gamma-1}}\) and then taking the expectation. The decisive calculation concerns 
what happens to the conditional bound \eqref{eqn:time-of-travel} when multiplying through by \(S_0^{-\tfrac{1}{\gamma-1}}\)
and taking the expectation; we obtain a mean length upper bound of
\begin{multline}\label{eqn:length-of-travel}
\Expect{  \left(\frac{1}{S_0}\right)^{\frac{1}{\gamma-1}}   
 \frac{5r_0^{\frac{\gamma-2}{\gamma-1}}}{4} 
\sum_{n=1}^\infty \left(
2^{-(n-1)}r_0 S_0+
 \frac{8}{\sqrt{5}-2}\right)^{\frac{1}{\gamma-1}}   
\left(2^{\frac{\gamma-2}{\gamma-1}}\right)^{-n}
 }
\quad=\quad\\
\quad=\quad
 \frac{5r_0^{\frac{\gamma-2}{\gamma-1}}}{4} 
\sum_{n=1}^\infty
\Expect{ \left(\frac{1}{S_0}\right)^{\frac{1}{\gamma-1}}
 \left(
2^{-(n-1)}r_0 S_0+
 \frac{8}{\sqrt{5}-2}\right)^{\frac{1}{\gamma-1}} 
 }
\left(2^{\frac{\gamma-2}{\gamma-1}}\right)^{-n}
\quad\leq\quad\\
\quad\leq\quad
 \frac{5r_0^{\frac{\gamma-2}{\gamma-1}}}{4} 
\sum_{n=1}^\infty
\Expect{ 
 \left(
2^{-(n-1)}r_0 S_0+
 \frac{8}{\sqrt{5}-2}\right)^{\frac{1}{\gamma-1}} \;;\; S_0\geq1
 }
\left(2^{\frac{\gamma-2}{\gamma-1}}\right)^{-n}
+\\
+
 \frac{5r_0^{\frac{\gamma-2}{\gamma-1}}}{4} 
\Expect{ \left(\frac{1}{S_0}\right)^{\frac{1}{\gamma-1}}\;;\;S_0<1}
\sum_{n=1}^\infty
 \left(
2^{-(n-1)}r_0 +
 \frac{8}{\sqrt{5}-2}\right)^{\frac{1}{\gamma-1}} 
\left(2^{\frac{\gamma-2}{\gamma-1}}\right)^{-n}
\,.
\end{multline}
Finiteness of the first summand follows by using the conditional Jensen's inequality as before (noting that \(\gamma>2\)).
Finiteness of the second summand follows by noting, as \(\gamma>2\),
\[
\Expect{ \left(\frac{1}{S_0}\right)^{\frac{1}{\gamma-1}}\;;\;S_0<1}\quad<\quad\infty\,.
\]
\end{proof}

We can now prove the full result: the \(\Pi\)-geodesics between specified points are of finite mean length if \(d=2\).
\begin{thm}\label{thm:finiteness-of-mean}
Suppose \(d=2\) and \(\gamma>2\).
 Consider a \(\Pi\)-geodesic \(\xi\) connecting two points \(x_1\) and \(x_2\). The mean length of \(\xi\) is finite.
\end{thm}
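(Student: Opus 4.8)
The plan is to bound the length of the (unconstrained) \(\Pi\)-geodesic by the radial distance travelled by the comparison process of Theorem \ref{thm:a-priori-bound}, run for the random but integrable time supplied by Lemma \ref{lem:finite-mean-length-in-ball}, thereby reducing the whole question to a single scalar expectation.

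First I would fix \(r_0>|x_2-x_1|\) with \(x_1,x_2\in\ball(\origin,r_0)\) and write \(T\) for the finite duration of \(\xi\). Since a \(\Pi\)-geodesic travels at maximal permissible speed, \(|\xi'(t)|=V(\xi(t))\) for almost all \(t\in[0,T]\). Moreover the constrained \(\Pi\)-path constructed in Lemma \ref{lem:finite-mean-length-in-ball} is a legitimate competitor, so the geodesic duration satisfies \(T\leq T_{r_0}^*\). The crucial step is then a pointwise speed comparison. Let \(y\) be the radial comparison process of Theorem \ref{thm:a-priori-bound}, with \(y(0)=r_0\) and \(y'=\overline{V}(y)\), so that \(|\xi(t)|\leq y(t)\). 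Because \(\overline{V}\) and \(y\) are both non-decreasing,
\[
|\xi'(t)|\;=\;V(\xi(t))\;\leq\;\overline{V}(|\xi(t)|)\;\leq\;\overline{V}(y(t))\;=\;y'(t)\,.
\]
Integrating, the length \(L=\int_0^T|\xi'|\) obeys \(L\leq y(T)-r_0\leq y(T_{r_0}^*)\), the last inequality using monotonicity of \(y\) together with \(T\leq T_{r_0}^*\). Thus finiteness of mean length reduces to the single statement \(\Expect{y(T_{r_0}^*)}<\infty\): the comparison system, run for time \(T_{r_0}^*\), reaches a radius of finite mean.

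To establish \(\Expect{y(T_{r_0}^*)}<\infty\) I would combine the perpetuity analysis of Theorem \ref{thm:a-priori-bound} with the integrability furnished by Lemma \ref{lem:finite-mean-length-in-ball}. Writing \(\Expect{y(T_{r_0}^*)}=r_0+\int_0^\infty\Prob{y(T_{r_0}^*)>\rho}\,\d\rho\) and noting that \(y(T_{r_0}^*)>\rho\) exactly when the comparison system reaches generalized distance \(\rho\) before time \(T_{r_0}^*\), I would bound \(\Prob{\tau_\rho<T_{r_0}^*}\), where (specialising the calculation of Theorem \ref{thm:a-priori-bound} to \(d=2\)) the first-passage time is \(\tau_\rho=\sum_{n:\,P_n\leq\rho}S_{n-1}^{-(\gamma-2)/(\gamma-1)}T_n\). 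Geometric ergodicity of the perpetuity \(X_n=S_nP_n\) and the exponential tails of the increments \(T_n\) force \(\tau_\rho\) to grow rapidly in \(\rho\) with a very small left tail, while Lemma \ref{lem:finite-mean-length-in-ball} controls the moments of \(T_{r_0}^*\) through the exponentially distributed meta-slowness \(S_0=V_0^{-(\gamma-1)}\) of the fastest line hitting \(\ball(\origin,r_0)\).

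The main obstacle is precisely this final estimate: making the left-tail control of \(\tau_\rho\) quantitative enough, uniformly down to \(\gamma>2\) (where the comparison process becomes heavy, \(y(s)\) growing like \(s^{(\gamma-1)/(\gamma-2)}\)), and coping with the statistical dependence between \(\tau_\rho\) and \(T_{r_0}^*\), since both are driven by the same fast lines of \(\Pi\). I expect this to be handled either by decoupling through a threshold, bounding \(\Prob{\tau_\rho<T_{r_0}^*}\leq\Prob{\tau_\rho<M}+\Prob{T_{r_0}^*>M}\) and optimising over \(M=M(\rho)\), or by absorbing the dependence into the conditional estimate \(\Expect{T_{r_0}^*\mid V_0}\) of Lemma \ref{lem:finite-mean-length-in-ball}; in either case the integral over \(\rho\) should converge because the large-deviation probabilities of the perpetuity decay faster than any power of \(\rho\).
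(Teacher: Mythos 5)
Your reduction is sound and even elegant: since \(|\xi'(t)|\leq V(\xi(t))\leq\overline{V}(|\xi(t)|)\leq\overline{V}(y(t))=y'(t)\) for almost all \(t\) (using Remark~\ref{rem:pi-path}, the comparison \(|\xi|\leq y\) of Theorem~\ref{thm:a-priori-bound}, and monotonicity of \(\overline{V}\)), integration gives \(L\leq y(T)-r_0\leq y(T_{r_0}^*)\), and the competitor bound \(T\leq T_{r_0}^*\) is correct. The genuine gap is in the final step, \(\Expect{y(T_{r_0}^*)}<\infty\), and it is not a deferred technicality: the two quantitative inputs you invoke are respectively false and unavailable. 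First, the left tail of \(\tau_\rho\) does \emph{not} decay faster than every power of \(\rho\): a single line of speed at least \(\rho/M\) hitting \(\ball(\origin,r_0)\) makes \(\overline{V}\geq\rho/M\) everywhere beyond \(r_0\) and hence forces \(\tau_\rho\leq M\), and this event has probability of order \(\pi r_0(M/\rho)^{\gamma-1}\) --- polynomial in \(\rho\), with an exponent that degrades as \(\gamma\downarrow2\). Second, Lemma~\ref{lem:finite-mean-length-in-ball} supplies only first (conditional) moments of \(T_{r_0}^*\); with Markov's inequality alone, your splitting \(\Prob{\tau_\rho<T_{r_0}^*}\leq\Prob{\tau_\rho<M(\rho)}+\Prob{T_{r_0}^*>M(\rho)}\) forces \(M(\rho)\gtrsim\rho^{1+\epsilon}\), which exceeds the \emph{typical} size \(\rho^{(\gamma-2)/(\gamma-1)}\) of \(\tau_\rho\) (note \((\gamma-2)/(\gamma-1)<1\)), so that \(\Prob{\tau_\rho<M(\rho)}\to1\) and the integral over \(\rho\) diverges. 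To close the argument along your lines one needs moments of \(T_{r_0}^*\) of order exceeding \((\gamma-1)/(\gamma-2)\) (blowing up as \(\gamma\downarrow2\)) together with a genuinely quantitative left-tail estimate for \(\tau_\rho\); neither is provided.

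The paper circumvents exactly this difficulty with a different key construction. It splits the geodesic into the portion inside \(\ball(\origin,r_0)\) (whose mean length follows from Lemma~\ref{lem:finite-mean-length-in-ball} directly) and the portion outside, and bounds the time spent outside not by \(T_{r_0}^*\) but by the traversal time \(T_*\) of a \emph{racetrack} of four fast lines surrounding the segment from \(x_1\) to \(x_2\): if the geodesic spent more than \(T_*\) outside, a racetrack short-cut would beat it. The point is that \(T_*\) has an explicit form, \(T_*\leq\sqrt{10}\,r_1\sum_{i=1}^4\bigl(S_0+\tfrac{1}{(\sqrt{10}-3)r_1}E_i'\bigr)^{1/(\gamma-1)}\), built from four independent exponentials; the distance travelled outside is then bounded by final speed times \(T_*\), with the final speed represented through a Poisson number \(\widetilde{N}\) of multiplicative speed jumps \(U_n^{-1/(\gamma-1)}\), and integrating out the uniforms and \(\widetilde{N}\) produces the factor \(\exp\bigl(\tfrac{\pi}{\gamma-2}S_0^{(\gamma-2)/(\gamma-1)}T_*\bigr)\). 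What is actually needed is finiteness of this \emph{exponential} moment jointly with \(S_0\) --- not any polynomial tail bound --- and it holds because the exponent carries coefficients of size \((r_1/r_0)^{(\gamma-2)/(\gamma-1)}\), which can be made small by taking \(r_0\) large. Your additive tail-splitting has no access to this multiplicative structure. If you wish to keep your cleaner one-line reduction \(L\leq y(T_{r_0}^*)\), the repair is to rework Lemma~\ref{lem:finite-mean-length-in-ball} so as to show that \(S_0^{(\gamma-2)/(\gamma-1)}T_{r_0}^*\) itself has suitable exponential moments for large \(r_0\), which amounts to redoing the paper's computation with the lemma's infinite sum of exponentials in place of the four racetrack terms.
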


\begin{proof}
 Consider two points \(x_1\), \(x_2\). Without loss of generality, set \(r_0>\tfrac{3}{\sqrt2}r_1=\tfrac{3}{\sqrt2}|x_2-x_1|\), \(\half(x_1+x_2)=\origin\), and \(x_1\), \(x_2\in\ball(\origin, r_0)\). 
Note that we can pick \(r_0\) as large as we please.
We wish to show that the \(\Pi\)-geodesic \(\xi\) from \(x_1\) to \(x_2\) is of finite length.

It is immediate from the \(\Pi\)-geodesic property that the time spent by \(\xi\) in \(\ball(\origin, r_0)\) cannot exceed the time spent travelling from \(x_1\) to \(x_2\) 
using the path described in Lemma \ref{lem:finite-mean-length-in-ball}. 
Following the arguments of Lemma \ref{lem:finite-mean-length-in-ball}, we deduce finiteness of mean for the length of the portion of \(\xi\) 
lying in \(\ball(\origin, r_0)\).

Let \(V_0=S_0^{-\frac{1}{\gamma-1}}\) be the fastest line hitting  \(\ball(\origin, r_0)\). Recall from Equation \eqref{eq:fastest-line} of Lemma \ref{lem:finite-mean-length-in-ball}
that
\[
 S_0 \quad=\quad \frac{1}{\pi r_0} E_0 \qquad \text{ where \(E_0\) is distributed as }\text{Exponential}(1)\,.
\]

\begin{Figure}
  \includegraphics[width=2in]{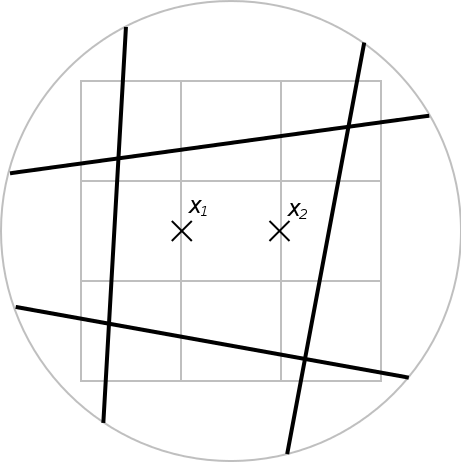}
\centering
\caption{\label{fig:racetrack}
Illustration of the racetrack construction: four \(r_1\times 3r_1\) rectangles placed to surround a central \(r_1\times r_1\) square,
which is centred inside a disc of radius \(r_0> \tfrac{{3}}{\sqrt2}r_1\). The racetrack is formed by four lines connecting the short sides of each rectangle,
chosen to be the fastest such lines which are strictly slower than the fastest line hitting the disc.
}
\end{Figure}
On the other hand, consider the ``racetrack'' around \(\origin\) formed by the fastest lines slower than \(V_0\) and connecting the short sides of rectangles
of sides \(r_1\) and \(3r_1\), placed to surround a central \(r_1\times r_1\) square
(see Figure \ref{fig:racetrack}).
By our choice of \(r_0\), the rectangles are all contained in \(\ball(\origin,r_0)\).
Each of these lines intersects the \(3r_1\times3r_1\) square in a segment of length at most \(\sqrt{10}r_1\). Moreover the invariant line measure of the set of lines
joining the short sides of a rectangle
of sides \(r_1\) and \(3r_1\) is given by
\[
 \half\left(
2\times \sqrt{10}r_1 - 2\times 3r_1
\right)\quad=\quad (\sqrt{10}-3)r_1\,.
\]
Therefore the speed-limits \(V'_i=(S'_i)^{-\frac{1}{\gamma-1}}\) (\(i=1,2,3,4\)) of these lines have distributions given by
\[
  S'_i \quad=\quad S_0 + \frac{1}{(\sqrt{10}-3) r_1} E'_i \qquad \text{ where \(E'_i\) is distributed as }\text{Exponential}(1)\,.
\]
Here the \(E'_1\), \(E'_2\), \(E'_3\), \(E'_4\) are independent of each other and of \(S_0\); 
this can be argued based on the facts that they are based on line-sets which are disjoint and conditioned on being slower than \(S_0\).
The racetrack establishes a path of length at most \(4\sqrt{10}r_1\), which can be traversed in time at most
\begin{multline}\label{eqn:max-time}
 T_* \;=\; \sqrt{10}r_1 \sum_{i=1}^4 \left(S_0 + \frac{1}{(\sqrt{10}-3) r_1} E'_i\right)^{\frac{1}{\gamma-1}}
\;=\;
\sqrt{10}r_1^{\frac{\gamma-2}{\gamma-1}} \sum_{i=1}^4 \left(r_1 S_0 + \frac{1}{\sqrt{10}-3} E'_i\right)^{\frac{1}{\gamma-1}}
\\
\quad\leq\quad
\sqrt{10}r_1^{\frac{\gamma-2}{\gamma-1}} \left(4 r_1^{\frac{1}{\gamma-1}} S_0^{\frac{1}{\gamma-1}} + \sum_{i=1}^4 \left(\frac{1}{\sqrt{10}-3 } E'_i\right)^{\frac{1}{\gamma-1}}\right)
\\
\quad=\quad
4\sqrt{10}r_1 S_0^{\frac{1}{\gamma-1}}
+
\sqrt{10}r_1^{\frac{\gamma-2}{\gamma-1}}\sum_{i=1}^4 \left(\frac{1}{\sqrt{10}-3 } E'_i\right)^{\frac{1}{\gamma-1}}
\,,
\end{multline}
where the inequality follows from the Minkowski inequality (note that \(\gamma>2\)).
It follows that \(\xi\) cannot spend more than \(T_*\) of time outside of \(\ball(\origin, r_0)\), since otherwise 
it would be possible to take a short-cut involving only some of the racetrack and two portions of \(\xi\) lying within \(\ball(\origin, r_0)\), thus travelling from 
\(x_1\) to \(x_2\) in less time overall.

We now apply the comparison technique used in the proof of Theorem \ref{thm:a-priori-bound}, using a scalar comparison process \(y\). We suppose that \(\xi\) starts at 
\(\partial\ball(\origin, r_0)\), so \(\dist(\xi(0),\origin)=r_0\). Then \(|\xi|<y\), where \(y(0)=0\) and \(y'(t)=\overline{V}(y(t))\). Note that the fastest line hitting \(\ball(\origin, r_0)\)
has speed-limit \(V_0\), so \(\overline{V}(y(0))=V_0\). Moreover \(\overline{V}(r)\) for \(r>r_0\) is based entirely on lines with speeds faster than \(V_0\),
and is therefore independent of \(E'_1\), \(E'_2\), \(E'_3\), \(E'_4\).

In dimension \(d=2\), generalized distance is simply ordinary distance. So the recursive formulation \eqref{eqn:meta-slowness-recursion} becomes
\begin{align}\label{eqn:meta-slowness-recursion2}
 R_n-R_{n-1} \quad&=\quad \frac{1}{S_{n-1}}\;\text{Exponential}\left(\pi\right)\,,\\
       S_n   \quad&=\quad S_{n-1} U_n \,,
\end{align}
for independent Uniform\((0,1)\) random variables \(U_i\),
with 
distribution of
\(S_0\) 
as above.

The times between successive changes of speed are given by 
\[
S_{n-1}^{\frac{1}{\gamma-1}}(R_n-R_{n-1})
\quad=\quad
{S_{n-1}^{-(\gamma-2)/(\gamma-1)}}\text{Exponential}\left(\pi\right)\,. 
\]
We know that \(S_n\) decreases as \(n\to\infty\).
Accordingly, a coupling argument shows that the number \(N_{T_*}\) of changes of speed by time \(T_*\) will not exceed \(\widetilde{N}\), where
\(\widetilde{N}\) has distribution \(\text{Poisson}\left({\pi S_0^{\frac{\gamma-2}{\gamma-1}} T_*}\right)\) when conditioned on \(S_0\) and \(T_*\),
and is independent of the actual changes of speed (though not of \(T^*\) or \(S_0\)).
Thus the final speed is no more than
\[
 S_0^{-\frac{1}{\gamma-1}} \prod_{n=1}^{\widetilde{N}} U_n^{-\frac{1}{\gamma-1}}\,,
\]
and the distance travelled by the \(\Pi\)-geodesic outside \(\ball(\origin, r_0)\) cannot exceed
\[
 S_0^{-\frac{1}{\gamma-1}} T_* \prod_{n=1}^{\widetilde{N}} U_n^{-\frac{1}{\gamma-1}}\,.
\]
Conditioning on \(E'_1\), \(E'_2\), \(E'_3\), \(E'_4\), and \(S_0\), we can
integrate out first the \(U_i\)'s and then the Poissonian variation \(\widetilde{N}\) from the resulting bound on mean distance travelled, and then use
 the upper bound on \(T_*\) specified by \eqref{eqn:max-time}.
We thus obtain the following bound on mean distance travelled, using \(S_0=\tfrac{1}{\pi r_0} E_0\) and exchangeability of the \(E'_i\):
\begin{multline*}
 \Expect{S_0^{-\frac{1}{\gamma-1}} T_* \times \prod_{n=1}^{\widetilde{N}} U_n^{-\frac{1}{\gamma-1}}}=
 \Expect{S_0^{-\frac{1}{\gamma-1}} T_* \times \left(\frac{\gamma-1}{\gamma-2}\right)^{\widetilde{N}} }
=
\Expect{S_0^{-\frac{1}{\gamma-1}} T_* \times \exp\left( \frac{\pi S_0^{\frac{\gamma-2}{\gamma-1}} }{\gamma-2}T_*\right)}\\
\quad\leq\quad
4\sqrt{10}\;\mathbb{E}\Big[
\left(r_1 
+
S_0^{-\frac{1}{\gamma-1}}r_1^{\frac{\gamma-2}{\gamma-1}}\left(\frac{1}{\sqrt{10}-3 } E'_1\right)^{\frac{1}{\gamma-1}}
\right)
\times
\exp\left( \frac{4\sqrt{10} \pi  r_1 S_0}{\gamma-2}\right)\times\\
\times
\exp\left( \frac{\sqrt{10} \pi (r_1 S_0)^{\frac{\gamma-2}{\gamma-1}} }{\gamma-2}\sum_{i=1}^4 \left(\frac{1}{\sqrt{10}-3 } E'_i\right)^{\frac{1}{\gamma-1}}\right)
\Big]\\
\quad\leq\quad
4\sqrt{10}\; r_1\;\mathbb{E}\Big[
\left(1 
+
\left(\frac{\pi}{\sqrt{10}-3 } \right)^{\frac{1}{\gamma-1}} \left(\frac{r_0}{r_1}\right)^{\frac{1}{\gamma-1}} \left(\frac{E'_1}{E_0}\right)^{\frac{1}{\gamma-1}}
\right)
\times
\exp\left( \frac{4\sqrt{10}}{\gamma-2}\; \frac{r_1}{r_0} E_0\right)\times\\
\times
\exp\left(\frac{\sqrt{10}}{\gamma-2} \frac{\pi^{\frac{1}{\gamma-1}}  }{(\sqrt{10}-3)^{\frac{1}{\gamma-1}}}
\left(\frac{r_1}{r_0}\right)^{\frac{\gamma-2}{\gamma-1}}
\sum_{i=1}^4 E_0^{\frac{\gamma-2}{\gamma-1}} (E'_i)^{\frac{1}{\gamma-1}} 
  \right)
\Big]\,.
\end{multline*}
But now we can apply the simple inequality
\[
 (E_0)^{\frac{\gamma-2}{\gamma-1}}(E'_i)^{\frac{1}{\gamma-1}}\quad=\quad
 (E_0)^{1-\frac{1}{\gamma-1}}(E'_i)^{\frac{1}{\gamma-1}} \quad\leq\quad
E_0 + E'_i\qquad\text{(for \(E_0>0\), \(E'_i>0\))}\,,
\]
to deduce that
\begin{multline}\label{eqn:controlling-inequality}
  \Expect{S_0^{-\frac{1}{\gamma-1}} T_* \times \prod_{n=1}^{N_{T_*}} U_n^{-\frac{1}{\gamma-1}}}\quad\leq\quad
4\sqrt{10}\; r_1\;\mathbb{E}\Big[
\left(1 
+
\left(\frac{\pi}{\sqrt{10}-3 } \right)^{\frac{1}{\gamma-1}} \left(\frac{r_0}{r_1}\right)^{\frac{1}{\gamma-1}} \left(\frac{E'_1}{E_0}\right)^{\frac{1}{\gamma-1}}
\right)
\times
\\
\times
\exp\left( \frac{4\sqrt{10}}{\gamma-2}\; \frac{r_1}{r_0} E_0\right)\times
\exp\left(\frac{\sqrt{10}}{\gamma-2} \frac{\pi^{\frac{1}{\gamma-1}}  }{(\sqrt{10}-3)^{\frac{1}{\gamma-1}}}
\left(\frac{r_1}{r_0}\right)^{\frac{\gamma-2}{\gamma-1}}
\left(4 E_0 + E'_1 + E'_2 + E'_3 + E'_4\right)
  \right)
\Big]\,.
\end{multline}
Now the expectation can be bounded above by an expression involving finite Gamma integrals of the forms
\[
 \int_0^\infty \exp(-\beta u)\d{u}\,,\qquad  \int_0^\infty u^{\frac{1}{\gamma-1}} \exp(-\beta u)\d{u}\,,\qquad  \int_0^\infty u^{-\frac{1}{\gamma-1}} \exp(-\beta u)\d{u}\,,
\]
for \(\beta>0\) (once \(r_0\) is chosen sufficiently large) and \(\gamma>2\).
Consequently the mean length of the \(\Pi\)-geodesic outside of \(\ball(\origin,r_0)\) must also be finite, proving
the theorem.
\end{proof}

This work shows that that planar
spatial networks formed from the Poisson line process model satisfy
property \ref{def:SIRSN-item-finite-length} of Definition \ref{def:SIRSN}.

 \section[Further properties of \texorpdfstring{$\Pi$}{Π}-geodesics in d=2]{Further properties of \(\Pi\)-geodesics in \(d=2\)}\label{sec:properties}
Finally we show that in dimension \(d=2\) any specified point \(x\) almost surely possesses just one \(\Pi\)-geodesic to \(\infty\);
moreover that for any three distinct points \(x,y,z\in\Reals^2\) almost surely the \(\Pi\)-geodesics from \(x\) to \(y\) and from \(x\) to \(z\) coincide for a non-trivial initial segment;
and also that if \(\Xi\) is an independent Poisson point process in \(\Reals^2\) 
then almost surely the totality of all \(\Pi\)-geodesics between points of \(\Xi\) forms a fibre process \citep[\S8.3]{ChiuStoyanKendallMecke-2013}
which places finite total length in any given compact subset of \(\Reals^2\).
 
This last result shows that the network generated by \(\Pi\) possesses a very weak variant of Aldous' \emph{SIRSN property} \citep{AldousGanesan-2013,Aldous-2012}; a SIRSN
(scale-invariant random spatial network) would have the property that the \emph{mean} total length per unit area was finite (weak SIRSN property)
and moreover the property that the mean total length of connecting routes of distance at least \(1\) from start and source would remain bounded as
the intensity of \(\Xi\) increased to infinity. It is conjectured that the network generated by \(\Pi\) is a true SIRSN, 
but at present all we can prove is the above ``pre-SIRSN'' property.
 
All three of these results depend on the same construction, based on \citet[Figure 6]{Aldous-2012}:
consider the behaviour of \(\Pi\)-geodesics starting from points in a \(2\times2\) square centred on the origin and ending outside a \(10\times10\) square centered at \((0,2)\).
Condition on the sides of the two squares and the \(y\)-axis all being subsets of lines from \(\Pi\), with speeds as follows: 
the sides of the \(2\times2\) square have speed \(a\), 
the \(y\)-axis has speed \(b\),
and the sides of the \(10\times10\) square have speed \(c\).
Suppose further that no other lines of \(\Pi_1\) (speed exceeding \(1\)) hit the \(10\times10\) square.
Figure \ref{fig:construction} illustrates the construction.
\begin{Figure}
  \includegraphics[width=2in]{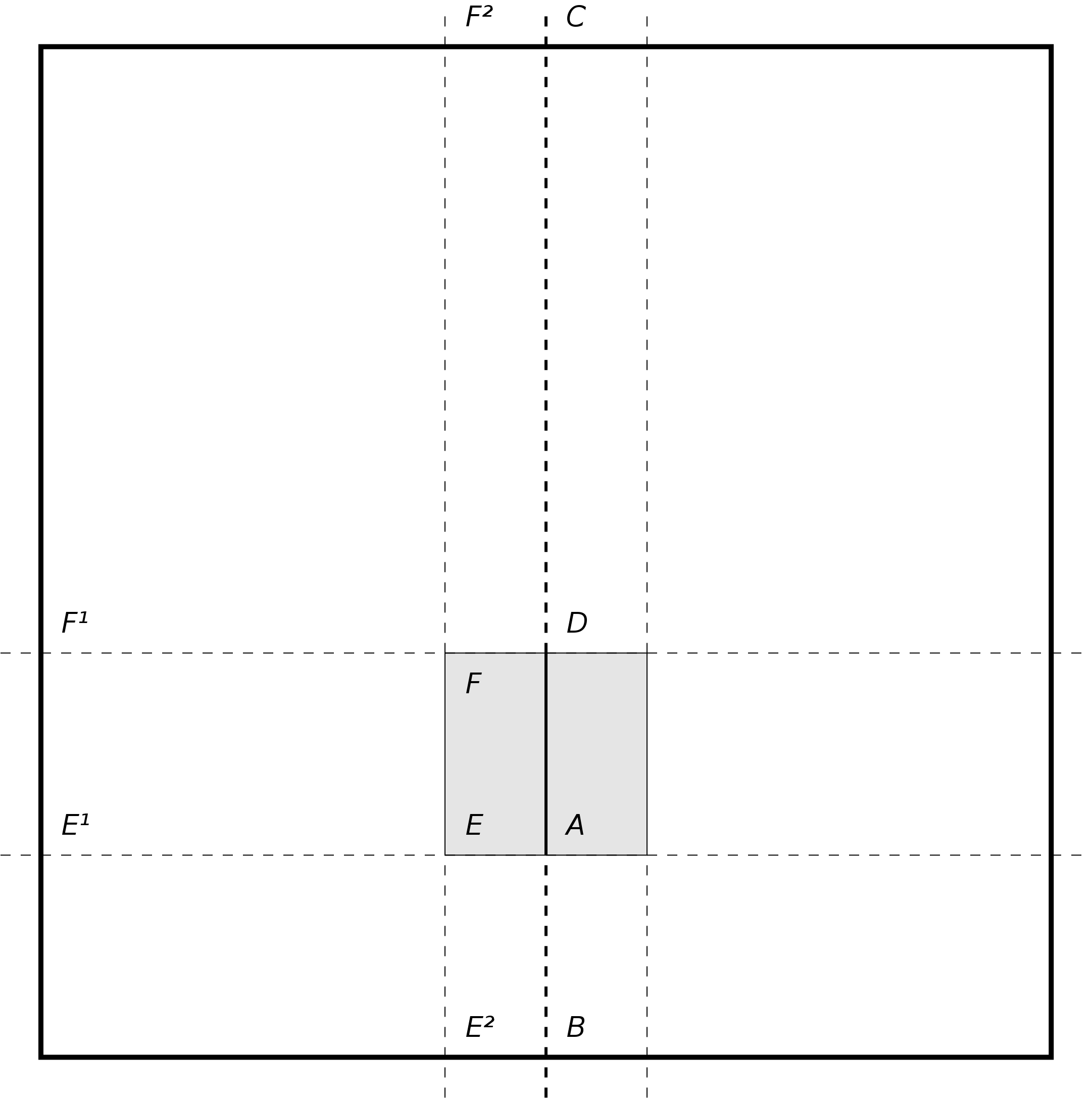}
\centering
\caption{\label{fig:construction}
Construction forcing certain \(\Pi\)-geodesics to pass through the points \(A\) and \(B\).
}
\end{Figure}

\begin{lemma}\label{lem:pre-SIRSN-structure}
 In the above situation, suppose that \(c > 10b > 59 a/3 > 354 / 3\). 
 Then any \(\Pi\)-geodesic connecting the interior of the \(2\times2\) square and the exterior of the \(10\times10\) square must pass through the points \(A=(0, -1)\)
 and \(B=(0,-3)\).
\end{lemma}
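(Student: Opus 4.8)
The plan is to run a quantitative \emph{no-shortcut} comparison built on the single governing principle that, inside the $10\times10$ square, the speed-limit function $V$ attains the large values $a,b,c$ only on the nine distinguished lines and is at most $1$ everywhere else (since by hypothesis no other line of $\Pi_1$ meets the square). First I would record two consequences. The minimality of a $\Pi$-geodesic means it never travels slowly where a faster route exists, and by the planar encounter structure (Theorem~\ref{thm:encounters}) it meets each distinguished line in finitely many segments; so I may compare the geodesic against competitor paths that are concatenations of segments along the distinguished lines joined by short \emph{slow connectors}, where any portion run off the distinguished lines advances at speed at most $1$ (hence covers Euclidean displacement no larger than its elapsed time). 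The crucial geometric fact is that the \emph{only} distinguished line crossing the open annular gap between the two squares is the $y$-axis (speed $b$), whose gap-entry point at the bottom is exactly $A=(0,-1)$ and whose meeting with the bottom side of the big square is exactly $B=(0,-3)$; meanwhile the boundary of the big square is a fast closed ``ring road'' of speed $c$.

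The next step handles the arbitrariness of the exterior target. Because the four sides of the $10\times10$ square have speed $c$ and meet at the corners, any boundary point is reachable from any other along $\partial(\text{big square})$ in time at most $(\text{perimeter})/c=40/c$. Consequently, up to an additive penalty $40/c$, the duration of a geodesic from the interior of the small square to the exterior of the big square is controlled by the time needed merely to \emph{first reach} $\partial(\text{big square})$, and its point of first exit may be slid to $B$. This \emph{ring-road reduction} removes the dependence on where the exterior endpoint lies, and reduces the whole problem to identifying the cheapest way to get from the small-square interior onto the boundary of the big square.

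Finally I would force the passage through $B$ and then through $A$, and this is where the numerical thresholds enter. Crossing the gap downwards along the $y$-axis from $A$ to $B$ costs $2/b$; crossing at the top costs $6/b$; crossing the gap anywhere off the $y$-axis costs at least $2$ (slow travel over a vertical extent of $2$). Thus the bottom $y$-axis crossing is the cheapest gap crossing, and $c>10b$ (equivalently $40/c<4/b$) is precisely what lets the ring-road penalty be absorbed, so that reaching the boundary at $B$ and riding the ring road beats reaching the boundary anywhere else: this forces first exit exactly at $B$. To reach the lower portion of the $y$-axis one must pass through its gap-entry point $A$; a competitor that leaves the small square elsewhere and then reaches $A$ along the bottom side (speed $a$) or by slow travel still passes through $A$, and the calibration $a>6$ and $b>59a/30$ is exactly what guarantees that the $y$-axis route through $A$ strictly beats every competitor from \emph{every} interior starting point. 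The main obstacle is precisely this uniformity: the dependence on the exterior endpoint is dispatched by the ring-road reduction, but the worst case over interior starting points---a start nestled against a side or corner of the small square, where getting onto the $y$-axis is most expensive in slow travel---is the delicate comparison that the stated thresholds are tuned to win, and one must also carefully legitimise replacing the geodesic's possibly intricate on/off-line behaviour by piecewise-line competitors, which minimality together with Theorem~\ref{thm:encounters} permits.
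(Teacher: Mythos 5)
Your argument rests on a geometric claim that is false, and the failure is fatal to the quantitative comparison that follows. You assert that ``the only distinguished line crossing the open annular gap between the two squares is the \(y\)-axis''. But the conditioning event makes the \emph{sides} of the \(2\times2\) square subsets of full lines of \(\Pi\) with speed \(a\): the four lines \(x=\pm1\), \(y=\pm1\) extend right across the gap (for instance \(x=1\) runs from \(y=-3\) to \(y=7\) inside the big square). Consequently your lower bound ``crossing the gap anywhere off the \(y\)-axis costs at least \(2\)'' is wrong: crossing from the corner \((1,-1)\) straight down along \(x=1\) to \((1,-3)\) costs \(2/a<1/3\) (since \(a>6\)), and crossing sideways along \(y=-1\) from \((1,-1)\) to \((5,-1)\) costs \(4/a<2/3\). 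These, not slow travel, are the serious competitors: from an interior starting point near \((1,-1)\), the route down \(x=1\) reaches the boundary of the \(10\times10\) square in time roughly \(2/a\), whereas the route through \(A\) and \(B\) costs roughly \(1/a+2/b\) plus a perimeter correction of order \(1/c\); since \(b>59a/30\) gives \(2/b\approx1/a\), the two are genuinely comparable, and the outcome is decided precisely by the stated numerical thresholds. Indeed, under your accounting (every off-axis crossing costing at least \(2\)) the hypotheses \(a>6\), \(b>59a/30\), \(c>10b\) would be absurdly generous, which is itself a warning sign that the real competitors have been missed.

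The paper's proof is organised around exactly these routes. The nine lines cut the big square into rectangles (of sizes \(2\times2\), \(1\times2\), \(1\times6\), \(2\times4\), \(6\times4\)); a geometric ``no short-cut'' comparison shows a geodesic cannot enter the interior of any of these rectangles, so it is confined to the closed small square together with the grid of lines; and its final segment is then one of finitely many explicit possibilities --- \(A\to B\to C\), the case \(D\to C\), or a crossing along a prolongation of a side of the small square starting from one of its corners (the cases \(E\to E^1\to C\), \(E\to E^2\to C\), \(F\to F^1\to C\), \(F\to F^2\to C\) and their mirror images), with the remaining travel on the perimeter. The time comparison among \emph{those} routes is where the constants \(10\) and \(59/3\) are actually spent. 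Your ring-road reduction is a reasonable analogue of the paper's treatment of the perimeter leg, and a correct ingredient to keep; but until you add the speed-\(a\) crossings along \(x=\pm1\) and \(y=\pm1\) to your case analysis and win the comparison against them uniformly over interior starting points and exit points, the proof does not go through.
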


\begin{proof}
To simplify exposition, we can and shall confine our attention to \(\Pi\)-geodesics constrained to lie in or on the \(10\times10\) square.

 First note from the figure that the construction can be divided into rectangles of dimensions \(2\times2\), \(1\times2\), \(1\times6\), \(2\times4\), and \(6\times4\).
For each of these rectangles the sides have speed at least \(c\), while any other lines intersecting the rectangles have speed not exceeding \(1\). 
Geometric comparisons show that, for any of these squares, \(\Pi\)-geodesics between pairs of points on the perimeter cannot intersect the interior. 
This is a ``no short-cut'' condition for \(\Pi\)-geodesics. 
In particular, a (constrained) \(\Pi\)-geodesic from the \(2\times2\) square to point \(C\) must be confined to the union of the \(2\times2\) square and the other square boundaries.

Consequently, such a \(\Pi\)-geodesic must have a final segment which is one of
\begin{enumerate}
 \item \(A\to B\to C\) (the \(B\to C\) part using the perimeter of the \(10\times10\) square);
 \item \(D\to C\);
 \item \(E\to E^2\to C\) (last part using perimeter);
 \item \(E\to E^1\to C\) (last part using perimeter);
 \item \(F\to F^2\to C\) (last part using perimeter);
 \item \(F\to F^1\to C\) (last part using perimeter);
 \item or one of four cases which are mirror images of cases \(3-6\).
\end{enumerate}

We can now compare times taken by these alternative routes:
under the condition \(c > 10b > 59 a/3 > 354 / 3\) it transpires that the quickest route always passes through the points \(A\)  and \(B\) as required.
\end{proof}

This lemma enables soft proofs of the three theorems of this section.
\begin{thm}\label{thm:unique-to-infinity}
 Suppose \(\gamma>d=2\). With probability \(1\), for any point \(x\) there is one and only one \(\Pi\)-geodesic from \(x\) to \(\infty\); moreover all such infinite \(\Pi\)-geodesics eventually coalesce when sufficiently far away from the origin.
\end{thm}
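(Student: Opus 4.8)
The plan is to treat the gate of Lemma \ref{lem:pre-SIRSN-structure} as a ``choke point'' device and to replicate it at arbitrarily large scales, exploiting the scale-equivariance of the intensity measure \eqref{eqn:improper}. First I would dispose of existence: fixing $x$ and taking points $y_k\to\infty$, the geodesics from $x$ to $y_k$ supplied by Corollary \ref{cor:geodesics} are, by Theorem \ref{thm:a-priori-bound} and Corollary \ref{cor:compactness}, weakly precompact on each bounded time-interval; a diagonal extraction yields a limiting $\Pi$-path from $x$ whose restriction to every bounded time-interval is a $\Pi$-geodesic and which leaves every ball $\ball(\origin,R)$, hence a geodesic ray to infinity.

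For the core argument I would set up a sequence of nested gates. Rescaling the configuration of Lemma \ref{lem:pre-SIRSN-structure} by factors $\lambda_n\to\infty$ (with speeds adjusted according to the scale-equivariance of \eqref{eqn:improper}), let $G_n$ be the event that a copy of the gate, suitably rotated and translated, sits across the annulus at radius of order $\lambda_n$, forcing every $\Pi$-geodesic crossing from inside to outside to pass through a single choke point $p_n$. By scale-equivariance $\Prob{G_n}$ equals the positive probability computed at unit scale. Choosing $\lambda_n$ growing geometrically fast, the defining data of distinct $G_n$ (the fast gate-lines together with the requirement that no sufficiently-fast line cross the relevant square) can be arranged to depend on essentially disjoint portions of $\Pi$, so the $G_n$ are independent enough for the second Borel--Cantelli lemma (or its martingale extension) to give $G_n$ infinitely often, almost surely.

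Granting infinitely many gates at radii tending to infinity, any geodesic ray to infinity must cross each sufficiently distant gate and so pass through its choke point; hence all such rays share a common sequence of choke points $p_{n_1}, p_{n_2}, \ldots\to\infty$. Given two rays $\xi_1,\xi_2$ (from the same or from different starting points), each segment of $\xi_i$ between consecutive shared choke points $p_{n_j}$ and $p_{n_{j+1}}$ is a $\Pi$-geodesic joining those two points; uniqueness of the point-to-point geodesic then forces $\xi_1$ and $\xi_2$ to agree between the choke points, yielding coalescence, and applying the same reasoning to the initial segment from a fixed $x$ to its first shared choke point yields uniqueness of the ray emanating from $x$.

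The main obstacle is precisely this last invocation of uniqueness: Theorem \ref{thm:uniqueness} guarantees an almost-surely unique geodesic only between \emph{fixed} points, whereas the choke points $p_{n_j}$ are random, being determined by the gate-defining lines; and Remark \ref{rem:non-uniqueness} warns that fixed-pair uniqueness does not upgrade automatically to simultaneous uniqueness over all pairs. I would resolve this by a Slivnyak-type conditioning argument: conditionally on the finitely many gate lines of $G_{n_j}$ and $G_{n_{j+1}}$, the choke points become deterministic and the remainder of $\Pi$ is again a copy of the marked Poisson line process, so the proof of Theorem \ref{thm:uniqueness} applies verbatim to give conditional almost-sure uniqueness between the now-fixed $p_{n_j}$ and $p_{n_{j+1}}$; integrating against the absolutely continuous law of the gate lines then removes the conditioning. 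Checking that the gate events can genuinely be decoupled across scales while keeping the choke points well-defined is the delicate quantitative point that this otherwise soft argument must navigate.
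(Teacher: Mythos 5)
Your architecture matches the paper's proof almost exactly: perturbed copies of the gate of Lemma \ref{lem:pre-SIRSN-structure} at geometrically growing scales, arranged via scale-equivariance of the intensity measure \eqref{eqn:improper} so that the defining events depend on disjoint portions of \(\Pi\) and the second Borel--Cantelli lemma yields infinitely many gates almost surely; every sufficiently long \(\Pi\)-geodesic from \(x\) must then thread the resulting choke points, and uniqueness of geodesics between choke points gives both coalescence and uniqueness of the ray. You have also correctly isolated the crux, namely that Theorem \ref{thm:uniqueness} concerns \emph{fixed} endpoint pairs while the choke points are random.

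However, your resolution of that crux has a genuine gap. After Slivnyak-conditioning on the gate lines, the configuration relative to which geodesics are computed is \emph{not} a copy of \(\Pi\): it is \(\Pi\) further conditioned to have no fast lines crossing the gate region, together with the finitely many now-deterministic gate lines, and geodesics must be minimised over this whole union. Theorem \ref{thm:uniqueness} therefore does not apply ``verbatim'': its proof relies on distributional features of \(\Pi\) itself --- the lengths \(L_{\sigma_i}\) along the auxiliary line \(\ell_0\) being sums of independent Gamma variables, and the Pareto speed \(V\) of \(\ell_0\) being independent of everything else --- and both are disturbed by inserting deterministic lines and deleting fast lines from a region; one would have to rework Section \ref{sec:pi-geodesics-uniqueness} for this modified process. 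The paper avoids conditioning altogether with a softer device: by Fubini applied to Theorem \ref{thm:uniqueness}, almost surely the \(\Pi\)-geodesic from \(y\) to \(x\) is unique for Lebesgue-almost every \emph{fixed} \(y\); picking such a \(y\) outside the large square, every geodesic from \(y\) to \(x\) passes through the choke point, so two distinct geodesics from \(x\) to the choke point would concatenate with the continuation to \(y\) to produce two distinct geodesics from \(x\) to \(y\), a contradiction. Hence the section from \(x\) to each (random) choke point is unique outright, and these unique sections assemble directly into the unique ray to infinity --- which also makes your separate diagonal-extraction existence step unnecessary.
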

\begin{proof}
Small perturbations of the structure described in Lemma \ref{lem:pre-SIRSN-structure} will have the same property (\(\Pi\)-geodesics from within small squares to exteriors of large squares all pass through specified points), 
and so there is a positive probability \(\eps>0\) that \(\Pi\) will generate a structure ensuring that all \(\Pi\) geodesics from the \(2\times2\) square reaching out further than the \(10\times10\) square
will have to pass through a specified pair of points near to \(A\) and \(B\).

Moreover we can use scale-invariance to generate further structures at larger scales, such that whether or not a corresponding perturbation of each structure is realized is independent of whether or not the other structures are realized. 
An appeal to the second Borel-Cantelli lemma then shows that there must be an infinite sequence of planar points \((0, -3a_1)\), \((0, -3a_2)\), \ldots \(\to\infty\), such that if 
\(x\in[-a_n,a_n]^2\) and \(y\not\in[-5a_n,5a_n]\times[-3a_n,7a_n]\)
then any \(\Pi\)-geodesic from \(y\) to \(x\) must pass through \((0, -3a_n)\). Moreover the section of this \(\Pi\)-geodesic from \(x\) to \((0,-3a_n)\) is uniquely determined, since for almost all \(y\) the \(\Pi\)-geodesic from \(y\) to \(x\) will be unique (Theorem \ref{thm:uniqueness}). Successive sections of similar \(\Pi\)-geodesics therefore build up a unique \(\Pi\)-geodesic from \(x\) to \(\infty\). Moreover the nature of the structure described in Lemma \ref{lem:pre-SIRSN-structure} ensures that, for any other planar point \(y\), this other point will be included in the smaller of the two rectangles of structures at sufficiently large scales: eventual coalescence of all infinite \(\Pi\)-geodesics thus follows.
\end{proof}

We can now establish a result similar to that of \cite{Bettinelli-2014} for the planar Brownian map: almost surely \(\Pi\)-geodesics emanating from a given point must initially coalesce.
(Evidently this cannot hold for \emph{all} points: consider points actually lying on a \(\Pi\)-geodesic!)
\begin{thm}\label{thm:coalescence-of-geodesics}
 Suppose \(\gamma>d=2\). Almost surely for any distinct points \(x\), \(y\), \(z\), the \(\Pi\)-geodesics from \(x\) to \(y\) and from \(x\) to \(z\) coincide for a non-trivial initial segment.
\end{thm}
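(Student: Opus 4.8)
The plan is to transplant the forced-passage construction of Lemma~\ref{lem:pre-SIRSN-structure}, already exploited in Theorem~\ref{thm:unique-to-infinity}, down to a small scale sitting in the immediate neighbourhood of $x$. Fix the distinct points $x$, $y$, $z$ and put $\delta=\min\{|x-y|,|x-z|\}>0$. By the scale-equivariance and translation-invariance of the intensity measure \eqref{eqn:improper}, for each scale $s>0$ there is a fixed positive probability (independent of $s$) that $\Pi$ realizes a small perturbation of the Lemma structure, positioned so that its inner square contains $x$ in its interior while its outer square is contained in $\ball(x,Cs)$, where $C$ is the geometric constant implicit in the figure. On this event every $\Pi$-geodesic running from the interior of the inner square to the exterior of the outer square is forced through a pair of points $\widehat{A}$, $\widehat{B}$ lying near the (scaled and translated) positions of $A$ and $B$, and in particular is forced to traverse the intervening segment $[\widehat{A},\widehat{B}]$ along the central line of the structure.

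Exactly as in the proof of Theorem~\ref{thm:unique-to-infinity}, I would then use scale-equivariance to produce such candidate structures at a geometrically separated sequence of scales $s_k\downarrow0$, placed so that $x$ stays in the interior of each inner square but bounded away from its sides and central line; for widely separated scales the defining speed-bands are disjoint, so the realizations are independent. Since each has the same positive probability, the second Borel--Cantelli lemma shows that almost surely infinitely many are realized. Choose one, of scale $s_{k_0}<\delta/C$. As $y$ and $z$ both lie outside $\ball(x,\delta)\supseteq\ball(x,Cs_{k_0})$, the two $\Pi$-geodesics $\xi$ (from $x$ to $y$) and $\eta$ (from $x$ to $z$) run from inside the inner square to outside the outer square, and are therefore both forced through $\widehat{A}$ and $\widehat{B}$, coinciding along $[\widehat{A},\widehat{B}]$.

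It remains to promote this shared segment to a genuine initial segment issuing from $x$. Because a sub-path of a minimum-time $\Pi$-geodesic is again minimum-time, the restrictions $\xi|_{[0,\tau_1]}$ and $\eta|_{[0,\tau_2]}$ up to first reaching $\widehat{A}$ are both $\Pi$-geodesics from $x$ to $\widehat{A}$; minimality then forces $\tau_1=\tau_2=:\tau$ equal to the geodesic time from $x$ to $\widehat{A}$. If the $\Pi$-geodesic from $x$ to $\widehat{A}$ is unique, these two restrictions coincide, and since $\widehat{A}$ lies at distance $\asymp s_{k_0}>0$ from $x$, this supplies the required non-trivial common initial segment $x\to\widehat{A}$ (indeed $x\to\widehat{A}\to\widehat{B}$).

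The main obstacle is precisely this last uniqueness: $\widehat{A}$ is not deterministic but a random intersection point of $\Pi$, so Theorem~\ref{thm:uniqueness}, which asserts uniqueness only between a \emph{fixed} pair of endpoints, does not apply off the shelf. I would resolve this by conditioning on the (positive-probability) structure event and re-running the argument of Theorem~\ref{thm:uniqueness}: that argument rules out ties using only the independent Pareto-distributed speed of an auxiliary line drawn from the part of $\Pi$ left untouched by the conditioning (together with Slivnyak's theorem), so the mechanism survives the conditioning intact and delivers almost-sure uniqueness of the geodesic from $x$ to $\widehat{A}$ on that event. Weak compactness of path space (Corollary~\ref{cor:compactness}) is available as a fallback for controlling limits if an approximation of $\widehat{A}$ by fixed points is preferred. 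This establishes the claim for the given triple $(x,y,z)$; no uniform statement over all triples is intended, since---as the remark preceding the theorem notes---coalescence necessarily fails when $x$ happens to lie on the geodesic joining $y$ and $z$, an event of probability zero for fixed $x$, $y$, $z$.
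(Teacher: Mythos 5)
Your construction is exactly the paper's: the published proof is a single sentence saying that the argument of Theorem \ref{thm:unique-to-infinity} is repeated with the structures of Lemma \ref{lem:pre-SIRSN-structure} generated at increasingly smaller scales surrounding \(x\) --- precisely your scales \(s_k\downarrow0\), independence across well-separated scales, and second Borel--Cantelli step. Where you diverge is in the final step, promoting the forced common passage point \(\widehat{A}\) to a common \emph{initial} segment. You do this via uniqueness of the \(\Pi\)-geodesic from \(x\) to the random point \(\widehat{A}\), obtained by re-running Theorem \ref{thm:uniqueness} conditionally on the structure event. That is workable in principle (given the structure, \(\widehat{A}\) is measurable with respect to the conditioned lines, while the tie-breaking randomness --- the Pareto speed of an auxiliary line and the Gamma lengths along it --- is untouched by the conditioning), but it is the heaviest and least justified part of your write-up. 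The paper's route (visible inside the proof of Theorem \ref{thm:unique-to-infinity}, where the section of the geodesic up to the forced point is declared ``uniquely determined'' by appeal to Theorem \ref{thm:uniqueness}) avoids random endpoints entirely: since sub-paths of minimum-time paths are minimum-time, the restrictions of \(\xi\) and \(\eta\) up to their common first hitting time \(\tau\) of \(\widehat{A}\) are both minimum-time paths from \(x\) to \(\widehat{A}\); if they differed, then concatenating \(\eta|_{[0,\tau]}\) with the continuation of \(\xi\) after time \(\tau\) would produce a second, distinct \(\Pi\)-geodesic from \(x\) to \(y\), contradicting Theorem \ref{thm:uniqueness} applied to the \emph{fixed} pair \((x,y)\) --- and an almost-sure statement for a fixed pair remains almost sure on any positive-probability event, so no conditional re-run of that theorem is needed. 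In short, your proof is correct in outline, but your ``main obstacle'' dissolves by cut-and-paste: you never need uniqueness toward \(\widehat{A}\), only toward \(y\) and \(z\).
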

\begin{proof}
 The argument follows that of Theorem \ref{thm:unique-to-infinity}, except that structures are now generated at increasingly smaller scales, all surrounding \(x\).
\end{proof}

\begin{thm}\label{thm:pre-SIRSN}
 Suppose \(\gamma>d=2\). The network generated by \(\Pi\) has the \emph{pre-SIRSN property}, in the sense that if \(\Xi\) is an independent Poisson point process in \(\Reals^2\) 
then almost surely the totality of all \(\Pi\)-geodesics between points of \(\Xi\) intersected with a compact set has finite total length.
\end{thm}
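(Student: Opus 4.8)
The plan is to control the length that $\Pi$-geodesics deposit in a fixed compact set $K$ by surrounding $K$ with a bottleneck of the kind produced by Lemma \ref{lem:pre-SIRSN-structure}, and then to show that only finitely many ``inward'' geodesic segments can actually reach $K$. First I would fix $K$ and rescale the configuration of Lemma \ref{lem:pre-SIRSN-structure} so that its inner ($2\times2$) square $S$ contains $K$, and then reproduce the multi-scale argument of Theorem \ref{thm:unique-to-infinity}: each scale carries a positive-probability perturbation of the bottleneck structure, the events at geometrically separated scales are independent, so by the second Borel--Cantelli lemma almost surely some scale $R$ is realised. At that scale I obtain a large square $Q_R$ together with a bottleneck point $B_R$ such that every $\Pi$-geodesic joining $\operatorname{int}(S)$ to the complement of $Q_R$ must pass through $B_R$.

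Next I would split the points of $\Xi$ into the \emph{near} points $\Xi\cap Q_R$ and the \emph{far} points $\Xi\setminus Q_R$. Since $Q_R$ is bounded and $\Xi$ is Poisson, there are almost surely only finitely many near points, hence finitely many near--near pairs; by Theorem \ref{thm:finiteness-of-mean} each connecting $\Pi$-geodesic has finite mean and hence almost surely finite length, so the near--near pairs contribute finite total length in $K$.

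The crux is the pairs with a far endpoint, where I would use three earlier facts: that sub-paths of $\Pi$-geodesics are themselves $\Pi$-geodesics, that $\Pi$-geodesics between fixed points are almost surely unique (Theorem \ref{thm:uniqueness}), and that a $\Pi$-geodesic cannot self-intersect (noted after Corollary \ref{cor:geodesics}). If a geodesic $\operatorname{geo}(x,y)$ meets $K$ and \emph{both} $x,y$ are far, then choosing $z\in\operatorname{geo}(x,y)\cap K\subset\operatorname{int}(S)$ the two sub-geodesics $z\to x$ and $z\to y$ each join $\operatorname{int}(S)$ to the exterior of $Q_R$ and so each passes through $B_R$; thus $\operatorname{geo}(x,y)$ would visit $B_R$ at two distinct times, contradicting non-self-intersection. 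Hence a far--far geodesic never meets $K$. If instead $y$ is far but $x$ is near, then $\operatorname{geo}(x,y)$ passes through $B_R$, its portion beyond $B_R$ leaves $S$ and cannot return (a re-entry would force a second passage through $B_R$), and so $\operatorname{geo}(x,y)\cap K$ is contained in the inward sub-geodesic from $x$ to $B_R$. By uniqueness this inward geodesic $\operatorname{geo}(x,B_R)$ depends only on $x$ and not on the far endpoint $y$, so all far endpoints collapse onto a single finite-length geodesic per near point. Summing over the finitely many near $x\in\Xi\cap Q_R$ gives a finite bound, which combined with the near--near contribution shows the total length in $K$ is almost surely finite.

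I expect the main obstacle to be this last reduction: turning an a priori infinite family of long-range geodesics through $K$ into finitely many finite-length inward segments. Everything hinges on correctly combining the bottleneck of Lemma \ref{lem:pre-SIRSN-structure} with uniqueness and non-self-intersection, and in particular on verifying that a geodesic's portion beyond the bottleneck genuinely leaves $S$ and contributes no further length to $K$. By contrast, the scale-invariance and Borel--Cantelli step is routine given the machinery of Theorem \ref{thm:unique-to-infinity}, and the finiteness of each individual geodesic length is immediate from Theorem \ref{thm:finiteness-of-mean}.
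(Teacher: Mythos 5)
Your proposal is correct and follows essentially the same route as the paper's proof: Lemma \ref{lem:pre-SIRSN-structure} plus the multi-scale Borel--Cantelli argument (as in Theorems \ref{thm:unique-to-infinity} and \ref{thm:coalescence-of-geodesics}) yields a bottleneck point at a random scale depending only on \(\Pi\), and then only the finitely many geodesics determined by the near points of \(\Xi\) together with the bottleneck point can meet the compact set, each of almost surely finite length by Theorem \ref{thm:finiteness-of-mean}. Your explicit far--far exclusion and near--far collapse (via non-self-intersection and sub-geodesics) is exactly the content the paper compresses into its assertion that at most \(\binom{N+1}{2}\) geodesics, based on the \(N\) near points and \(H\), can intersect the square.
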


\begin{proof}
 By scaling and monotonicity, we may suppose that the compact set in question is the \(1\times1\) square centred at the origin. Arguing as in Theorem \ref{thm:coalescence-of-geodesics},
at a suitably large scale there will be a structure which forces all \(\Pi\)-geodesics between points in the \((1\times1)\) square and points in the exterior of a \(L\times L\) square to pass through a specified point \(H\)
near the boundary of the \(L\times L\) square. Here \(L\) is random but depends only on \(\Pi\), not \(\Xi\).

Let \(N\) be the random number of points placed by \(\Xi\) in the \(L\times L\) square. 
Then at most \(\binom{N+1}{2}\) \(\Pi\)-geodesics can intersect the \(1\times1\) square (based on these points and on \(H\)).
Each of these \(\Pi\)-geodesics has finite length (Theorem \ref{thm:finiteness-of-mean}), so the result follows.
\end{proof}
This proves property \ref{def:SIRSN-item-locally-finite} of Definition \ref{def:SIRSN} for the planar case.

 \section{Conclusion}\label{sec:conclusion}
This paper has established:
\begin{enumerate}
 \item Basic metric space properties of the Poisson line process model,
 including existence of minimum-time paths;
 \item Extension of the metric space properties of the Poisson line process model to higher dimensions;
 \item Approximation results for minimum-time paths (``\(\Pi\)-geodesics'');
 \item Almost-sure uniqueness and finite mean length of \(\Pi\)-geodesics in the planar case;
 \item Local finiteness of resulting networks in the planar case.
\end{enumerate}
As a result, it follows that the planar Poisson line process model produces a pre-SIRSN (Definition \ref{def:SIRSN}).
The major outstanding question is, whether in fact the weak SIRSN or even full SIRSN properties hold for the planar Poisson line process model.
Extending the method of Theorem \ref{thm:pre-SIRSN} would require a much more quantitative approach; 
it would be necessary to estimate the scale at which
there would exist structures forcing large \(\Pi\)-geodesics to pass through specified points.

A linked question concerns the nature of \(\Pi\)-geodesics in the planar case: can they be represented using sequences of line segments from the Poisson line process,
or do they necessarily involve the tree-like representations described in the proofs of Theorems \ref{thm:connection} and \ref{thm:metric-space}?
The methods of Section \ref{sec:pi-geodesics-uniqueness} are suggestive that the answer is yes, 
but do not entirely exclude the possibility of slow-down at points not lying on \(\Pi\).  One must show that \(\Pi\)-geodesics
between pairs of points can \emph{never} slow down to zero speed \emph{en route}.
This is conceptually linked to the notion of network transit points \citep{BastFunkeSandersSchultes-2007}, as discussed in \cite{Aldous-2012}.

A further question concerns whether the pre-SIRSN property extends to higher dimensions. 
Point-line duality is pervasive in the arguments of the second half of this paper, so presently it is not clear how to proceed with this.
Possibly a quantitative form of \(\Pi\)-geodesic coalescence, as described in Theorem \ref{thm:coalescence-of-geodesics}, might allow headway to be made.

The paper has focussed throughout on results relating directly to the pre-SIRSN property of the Poisson line process model.
We have noted in passing and without proof some computations which establish sharpness of conditions on \(\gamma\) in our results
(Remarks \ref{rem:equivalence}, \ref{rem:connection-by-geodesics}); 
similar calculations show that the topology of Euclidean space \(\Reals^d\) viewed as a \(\Pi\)-geodesic metric space (for \(\gamma>d\)) is the normal Euclidean topology.

Finally, we note that an alternative motivation for the above work is given by recent developments in the study of Brownian maps; 
the random metric space given here can be compared to the Brownian map (note for example that both situations exhibit coalescence of geodesics)
and promises by its constructive nature to be (relatively) more amenable to rigorous mathematical investigation, as well as providing higher dimensional constructions.
It would be of great interest to clarify the extent to which the two theories can be linked.
Note in particular the intriguing prospect of mimicking the Brownian map theory by the construction of ``Liouville Brownian motions'', perhaps using Dirichlet form theory
(compare \citealp{Berestycki-2013,GarbaRhodesVargas-2013}).

\bigskip
\noindent
\textbf{Acknowledgements:} My thanks to David Aldous, who challenged me to try to understand the Poisson line process model for a scale-invariant network.

\bibliographystyle{chicago}
\bibliography{SIRSN-account}



\end{document}